\begin{document}

\title{Backward martingale transport maps and equilibrium with
  insider}

\author{Dmitry Kramkov\footnote{Carnegie Mellon University, Department
    of Mathematical Sciences, 5000 Forbes Avenue, Pittsburgh, PA,
    15213-3890, USA,
    \href{mailto:kramkov@cmu.edu}{kramkov@cmu.edu}. The author also
    has a research position at the University of Oxford.}  and Mihai
  S\^{i}rbu\footnote{The University of Texas at Austin, Department of
    Mathematics, 2515 Speedway Stop C1200, Austin, Texas 78712,
    \href{mailto:sirbu@math.utexas.edu}{sirbu@math.utexas.edu}. The
    research of this author was supported in part by the National
    Science Foundation under Grant DMS 1908903.}  }

\date{\today}

\maketitle

\begin{abstract}
  We consider an optimal transport problem with backward martingale
  constraint. The objective function is given by the scalar product of
  a pseudo-Euclidean space $S$.  We show that the supremums over maps
  and plans coincide, provided that the law $\nu$ of the input random
  variable $Y$ is atomless.  An optimal map $X$ exists if $\nu$ does
  not charge any $c-c$ surface (the graph of a difference of convex
  functions) with strictly positive normal vectors in the sense of the
  $S$-space.  The optimal map $X$ is unique if $\nu$ does not charge
  $c-c$ surfaces with nonnegative normal vectors in the $S$-space. As
  an application, we derive sharp conditions for the existence and
  uniqueness of equilibrium in a multi-asset version of the model with
  insider from \citet{RochVila:94}. In the linear-Gaussian case, we
  characterize Kyle's lambda, the sensitivity of price to trading
  volume, as the unique positive solution of a non-symmetric algebraic
  Riccati equation.
\end{abstract}


\begin{description}
\item[Keywords:] martingale optimal transport, pseudo-Euclidean space,
  Kyle's lambda, maximal monotone multifunction, Riccati equation.
\item[AMS Subject Classification (2010):] 60G42, 91B24, 91B52.
\end{description}

\section{Introduction}
\label{sec:introduction}

Let $S$ be a symmetric invertible $d\times d$ matrix with
$m\in\braces{0,1,\dots d}$ positive eigenvalues. We interpret the
bilinear form
\begin{displaymath}
  S(x,y)\set \ip{x}{Sy}=\sum_{i,j=1}^d x^i S_{ij}y^j , \quad x,y\in \real{d},
\end{displaymath}
as the scalar product of a pseudo-Euclidean space, called the
$S$-space.

Let $Y$ be a $d$-dimensional random variable with finite second
moments defined on a probability space
$(\Omega, \mathcal{F}, \mathbb{P})$.  We consider the Monge-type
optimal transport problem:
\begin{displaymath}
  \text{maximize~} \frac12 \EP{S(X,Y)} \text{~over~}
  X\in\mathcal{X}(Y), 
\end{displaymath} 
where $\mathcal{X}(Y)$ is the family of backward martingale
\emph{maps}:
\begin{displaymath}
  \mathcal{X}(Y) \set \descr{X}{X \text{~is 
      $Y$-measurable and~} \cEP{Y}{X} = X}.  
\end{displaymath}

The Kantorovich-type relaxation of this problem is to maximize the
same objective over backward martingale \emph{plans}. By possibly
enlarging the probability space, we can represent such plans as the
joint laws of $(X,Y)$ for random variables $X$ satisfying the
martingale constraint: $\cEP{Y}{X} = X$.  The maps have the additional
$Y$-measurability property: $X=f(Y)$ for some Borel function $f$.  The
plan problem is easier to study due to the convexity of the
optimization set.

The relations between the map and plan problems and their properties
depend on the regularity of the law $\nu$ of the input random variable
$Y$.
\begin{enumerate}[label = {\rm (\alph{*})}, ref={\rm (\alph{*})}]
\item \label{item:1} Theorem~\ref{th:1} shows that the map and plan
  problems have the same values if the law of $\nu$ is
  \emph{atomless}.
\item \label{item:2} Theorem~\ref{th:4} shows that an optimal map
  exists if $\nu$ does not charge any $c-c$ surface (the graph of a
  difference of convex functions) with \emph{strictly positive} normal
  vectors in the $S$-space.
\item \label{item:3} Theorem~\ref{th:6} shows that the optimal map is
  unique if, in addition to the assumption of~\ref{item:2}, $\nu$ does
  not charge $c-c$ surfaces whose normal vectors in the $S$-space are
  \emph{isotropic} for $m=1$ and \emph{nonnegative and almost
    isotropic} for $m>1$.
\end{enumerate}
We point out that the assumptions of \ref{item:1}, \ref{item:2},
and~\ref{item:3} hold if $\nu$ has a density with respect to Lebesgue
measure.  For $d=2$ and the \emph{standard} matrix $S =
\begin{pmatrix}
  0 & 1 \\
  1 & 0
\end{pmatrix}$, Theorems~\ref{th:4} and~\ref{th:6} improve the
existence and uniqueness criteria from~\citet{KramXu:22}, where the
covering in items~\ref{item:2} and~\ref{item:3} has been accomplished
with Lipschitz (not $c-c$) surfaces.

The proof of Theorem~\ref{th:1} from item~\ref{item:1} is based on a
result of independent interest, the \emph{pointwise uniform}
approximation of plans by maps.  Let $X$ and $Y$ be random variables
and assume that the law of $Y$ is atomless. For every $\epsilon>0$,
Theorem~\ref{th:12} constructs a random variable $Z$ having the same
law as $Y$ and such that $X$ is $Z$-measurable and
$\abs{Z-Y}<\epsilon$. The novelty of this construction is that we fix
the ``target'' $X$ and modify $Y$. In a more traditional approach, as
in~\citet{Prat:07} and~\citet{BeigLack:18}, it is the ``origin'' $Y$
that remains unchanged. As a consequence, only \emph{in law}
approximation of plans by maps is possible.

The original motivation for the backward martingale transport comes
from Kyle's equilibrium for insider trading introduced
in~\citet{Kyle:85}. The paper~\cite{KramXu:22} studies a version of
such equilibrium from~\citet{RochVila:94} and shows its connection to
the map problem for $d=2$ and the standard matrix $S$.  In
Section~\ref{sec:equil-with-insid}, we investigate the multi-asset
version of Rochet and Vila's equilibrium, where $d=2m > 2$.
Theorem~\ref{th:8} shows that an equilibrium with a monotone pricing
function exists if and only if one can find an optimal map $X$ and a
dual optimizer $G$ such that the law of $(X,Y)$ is an optimal plan and
the projection of $G$ on the first $m$ coordinates is the whole space
$\real{m}$. If $Y$ is a Gaussian random variable, then the equilibrium
and map problems have explicit linear solutions described in
Theorem~\ref{th:11}. In particular, Theorem~\ref{th:11} characterizes
the multi-dimensional analogue of Kyle's lambda from~\cite{Kyle:85},
the sensitivity of price to trading volume, as the unique positive
matrix solving a non-symmetric algebraic Riccati equation.

\subsection*{Notations}
\label{sec:notations}

The scalar product and the norm in the Euclidean space $\real{d}$ are
written as
\begin{displaymath}
  \ip{x}{y} \set \sum_{i=1}^d x_i y_i, \quad \abs{x} \set
  \sqrt{\ip{x}{x}}, \quad x,y\in \real{d}.
\end{displaymath}

A multifunction $\mmap{T}{\real{m}}{\real{n}}$ is a mapping from
$\real{m}$ into subsets of $\real{n}$. The domain of $T$ is the set of
those $x\in \real{m}$ where $T(x)$ is not empty:
\begin{displaymath}
  \dom{T} \set \descr{x\in \real{m}}{T(x) \not=\emptyset}. 
\end{displaymath}

For a Borel probability measure $\mu$ on $\real{d}$, a
$\mu$-integrable $m$-dimensional Borel function $f=(f_1,\dots,f_m)$,
and an $n$-dimensional Borel function $g = (g_1,\dots,g_n)$, the
notation $\mu(f|g)$ stands for the $m$-dimensional vector of
conditional expectations of $f_i$ given $g$ under $\mu$:
\begin{displaymath}
  \mu(f|g) = (\mu(f_1|g_1,\dots,g_n),\dots,\mu(f_m|g_1,\dots,g_n)).
\end{displaymath}
In particular, we write $\mu(f)$ for the vector of expected values:
\begin{displaymath}
  \mu(f) = \int f d\mu = (\int f_1 d\mu, \dots, \int f_m d\mu) =
  (\mu(f_1),\dots,\mu(f_m)). 
\end{displaymath}
We write $\supp{\mu}$ for the support of $\mu$, the smallest closed
set of full measure.

Similarly, if $(\Omega, \mathcal{F}, \mathbb{P})$ is a probability
space, $X$ and $Y$ are respectively, $m$- and $n$-dimensional random
variables, and $Y$ is integrable, then
\begin{displaymath}
  \cEP{Y}{X} = \cbraces{\cEP{Y_1}{X_1,\dots,X_m}, \dots,
    \cEP{Y_n}{X_1,\dots,X_m}}, 
\end{displaymath}
denotes the $n$-dimensional vector of conditional expectations of
$Y_i$ with respect to $X$. All relations between random variables are
understood in the $\as{\mathbb{P}}$ sense. In particular, $X$ is
$Y$-measurable if and only if $X=f(Y)$ ($\as{\mathbb{P}}$) for a Borel
function $\map{f}{\real{m}}{\real{d}}$.

\section{Equality of values of plan and map problems}
\label{sec:equality-values-plan}

We denote by $\msym{d}{m}$ the family of symmetric $d\times d$
matrices of full rank with $m \in \braces{0,1,\dots,d}$ positive
eigenvalues. For $S \in \msym{d}{m}$, the bilinear form
\begin{displaymath}
  S(x,y) \set \ip{x}{Sy}  = \sum_{i,j=1}^d x^i S_{ij}y^j, \quad
  x,y\in \real{d},
\end{displaymath}
defines the scalar product on a pseudo-Euclidean space $\real{d}_m$
with dimension $d$ and index $m$, which we call the $S$-space. The
quadratic form $S(x,x)$ is called the \emph{scalar square}; its value
may be negative.

Let $(\Omega, \mathcal{F}, \mathbb{P})$ be a probability space and $Y$
be a $d$-dimensional random variable with finite second moment:
$Y\in \lsp{2}{d}$.  Our goal is to
\begin{equation}
  \label{eq:1}
  \text{maximize} \quad \frac12 \EP{S(X,Y)}
  \quad \text{over}\quad X \in \mathcal{X}(Y), 
\end{equation} 
where
\begin{displaymath}
  \mathcal{X}(Y) \set \descr{X\in
    \lsp{2}{d}}{X \text{~is 
      $Y$-measurable and~} \cEP{Y}{X} = X}.  
\end{displaymath}

The Kantorovich-type relaxation of the optimal \emph{map}
problem~\eqref{eq:1} is the optimal \emph{plan} problem:
\begin{equation}
  \label{eq:2}
  \text{maximize} \quad \frac12 \int S(x, y)
  d\gamma\quad\text{over}\quad 
  \gamma\in \Gamma(\nu), 
\end{equation}
where $\nu \set \law{Y}$ belongs to $\ps{2}{\real{d}}$, the family of
Borel probability measures on $\real{d}$ with finite second moments,
and
\begin{displaymath}
  \Gamma(\nu) \set \descr{\gamma \in
    \ps{2}{\real{2d}}}{\gamma(\real{d},dy) =  
    \nu(dy) \text{~and~}
    \gamma(y|x)=x}.  
\end{displaymath}
The plan problem is easier to study, because the optimization set
$\Gamma(\nu)$ is a convex compact set in the Wasserstein $2$-space and
thus, an optimal plan always exists. We refer to Lemma~2.8 and
Theorem~2.5 in~\citet{KramSirb:24} for the details. Clearly,
$\law{X,Y} \in \Gamma(\nu)$ for every $X\in
\mathcal{X}(Y)$. Therefore,
\begin{displaymath}
  \sup_{X \in \mathcal{X}(Y)} \EP{S(X,Y)} \leq
  \max_{\gamma\in \Gamma(\nu)} \int S(x,y) d\gamma.
\end{displaymath}
Notice that the inequality may be strict and an optimal map may not
exist, as Examples~5.2 and~5.3 in~\cite{KramXu:22} show.

The following theorem is similar to that of~\cite{Prat:07} obtained
for the classical unconstrained optimal transport problem. For $d=2$
and the \emph{standard} $S =
\begin{pmatrix}
  0 & 1 \\
  1 & 0
\end{pmatrix} \in \msym{2}{1}$, it has been proved in~\cite{KramXu:22}.

\begin{Theorem}
  \label{th:1}
  Let $S\in \msym{d}m$, $Y\in \lsp{2}{d}$, and suppose that
  $\nu \set \law{Y}$ is atomless. Then the problems~\eqref{eq:1}
  and~\eqref{eq:2} have the same values:
  \begin{displaymath}
    \sup_{X \in \mathcal{X}(Y)} \EP{S(X,Y)} =
    \max_{\gamma\in \Gamma(\nu)} \int S(x,y) d\gamma.
  \end{displaymath}
\end{Theorem}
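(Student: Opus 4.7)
The direction $\sup_{X\in\mathcal{X}(Y)}\EP{S(X,Y)}\leq\max_{\gamma\in\Gamma(\nu)}\int S(x,y)\,d\gamma$ was already observed before the statement, so the task is to prove the reverse inequality. My plan is: realize an optimal plan by a pair $(X^*,Y)$, apply the pointwise-uniform approximation of Theorem~\ref{th:12} with ``target'' $X^*$ to obtain $Z$ of law $\nu$ arbitrarily close to $Y$ with $X^*$ a Borel function of $Z$, and then project $Z$ onto $X^*$ to produce a competitor map whose value is close to the plan's value.

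Fix an optimal plan $\gamma^*\in\Gamma(\nu)$ (it exists by the compactness result cited above from~\cite{KramSirb:24}). Because the map problem's value depends only on $\law{Y}=\nu$, I may enlarge $(\Omega,\mathcal{F},\mathbb{P})$ and assume $\gamma^*=\law{X^*,Y}$ for some $X^*\in\lsp{2}{d}$ satisfying $\cEP{Y}{X^*}=X^*$. The backward martingale identity and the symmetry of $S$ give
\begin{displaymath}
  \int S(x,y)\,d\gamma^* = \EP{\ip{X^*}{S\,\cEP{Y}{X^*}}} = \EP{S(X^*,X^*)}.
\end{displaymath}
Since $\nu$ is atomless, Theorem~\ref{th:12}, applied with target $X^*$ and origin $Y$, furnishes for every $\epsilon>0$ a random variable $Z$ with $\law{Z}=\nu$, $X^*$ a Borel function of $Z$, and $\abs{Z-Y}<\epsilon$ almost surely.

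Set $\tilde X\set\cEP{Z}{X^*}$. It is $Z$-measurable (being a Borel function of $X^*$, which is itself a Borel function of $Z$), and since $\sigma(\tilde X)\subseteq\sigma(X^*)$, the tower property yields $\cEP{Z}{\tilde X}=\tilde X$, so $\tilde X\in\mathcal{X}(Z)$. The same $X^*$-measurability trick gives $\EP{S(\tilde X,Z)}=\EP{S(\tilde X,\tilde X)}$, while $\tilde X-X^*=\cEP{Z-Y}{X^*}$ together with $\abs{Z-Y}<\epsilon$ forces $\abs{\tilde X-X^*}\leq\epsilon$ almost surely. Applying the polarization identity $S(a,a)-S(b,b)=S(a-b,a+b)$ and the operator bound $\abs{S(u,v)}\leq\norm{S}\abs{u}\abs{v}$,
\begin{displaymath}
  \abs{\EP{S(\tilde X,\tilde X)}-\EP{S(X^*,X^*)}}\leq\norm{S}\,\epsilon\,\EP{\abs{\tilde X}+\abs{X^*}},
\end{displaymath}
which is $O(\epsilon)$ as $X^*$ and $\tilde X$ both lie in $L^2$. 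Since the map problem depends only on the law of its fixed random variable,
\begin{displaymath}
  \sup_{X\in\mathcal{X}(Y)}\EP{S(X,Y)}=\sup_{X\in\mathcal{X}(Z)}\EP{S(X,Z)}\geq\EP{S(\tilde X,Z)},
\end{displaymath}
and sending $\epsilon\downarrow 0$ completes the proof.

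The only substantive input is the approximation step: producing $Z$ pointwise close to $Y$ while \emph{forcing} $X^*$ to become $Z$-measurable. This is the novelty of Theorem~\ref{th:12} — one fixes the target and perturbs the origin, whereas the traditional Prat-style construction fixes the origin and yields only approximation in law, which would be insufficient for the map problem. Everything else is algebraic bookkeeping of conditional expectations and the bilinearity of $S$.
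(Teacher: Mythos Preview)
Your proof is correct and follows essentially the same route as the paper: realize an optimal plan as $(X^*,Y)$, apply Theorem~\ref{th:12} to get $Z$ with $\law Z=\nu$, $|Z-Y|\le\epsilon$, and $X^*$ $Z$-measurable, then project $Z$ onto $X^*$ to obtain a map whose value is $O(\epsilon)$-close. The only cosmetic differences are that the paper explicitly pulls the map back to $Y$ by writing $\tilde X=f(Z)$ and setting $U:=f(Y)\in\mathcal{X}(Y)$ (which is exactly the witness behind your law-invariance claim $\sup_{\mathcal X(Y)}=\sup_{\mathcal X(Z)}$), and it bounds $|S(X^*,Y)-S(\tilde X,Z)|$ directly rather than first reducing both terms to scalar squares via the martingale identity and polarization.
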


\begin{proof}
  Let $\gamma$ be an optimal plan for~\eqref{eq:2}. By extending, if
  necessary, the underlying probability space we can assume that
  $\gamma = \law{X,Y}$ for some random variable $X$. As
  $\gamma(y|x) = x$, we have that $X = \cEP{Y}{X}$.

  Let $\epsilon>0$. Theorem~\ref{th:12} yields a $d$-dimensional
  random variable $Z=Z(\epsilon)$ such that
  \begin{displaymath}
    \law{Z} = \law{Y}, \quad \abs{Z-Y} \leq \epsilon, \quad  X
    \text{~is $Z$-measurable}.  
  \end{displaymath}
  Since $X$ is $Z$-measurable, the conditional expectation
  $V \set \cEP{Z}{X}$ is $Z$-measurable as well. Thus, there is a
  Borel function $\map{f}{\real{d}}{\real{d}}$ such that $V = f(Z)$.
  We clearly have $V=\cEP{Z}{V}$.  Since $Y$ and $Z$ have identical
  laws,
  \begin{displaymath}
    U \set f(Y) = \cEP{Y}{U}. 
  \end{displaymath}
  For all $x,y,v,z$ in $\real{d}$, we have that
  \begin{align*}
    \abs{S(x,y) - S(v,z)} & \leq \abs{S(x-v,y)} + \abs{S(v,z-y)} \\
                          & \leq \norm{S}(\abs{y}\abs{x-v} +
                            \abs{v}\abs{z-y}),  
  \end{align*}
  where $\norm{S} \set \max_{\abs{x}=1}\abs{Sx}$, the norm of $S$.
  Since $\abs{Z-Y} \leq \epsilon$ and
  \begin{align*}
    \abs{V - X} & = \abs{\cEP{Z - Y}{X}} \leq 
                  \cEP{\abs{Z-Y}}{X} \leq 
                  \epsilon, 
  \end{align*}
  we have that
  \begin{align*}
    \abs{S(X, Y) - S(V,Z)}
    \leq \epsilon \norm{S} (\abs{Y} + \abs{V}).                 
  \end{align*}
  As
  $\EP{\abs{V}} = \EP{\abs{\EP{Z|X}}} \leq \EP{\abs{Z}} \leq
  \EP{\abs{Y}} + \epsilon$ and $\law{U,Y} = \law{V,Z}$, we obtain that
  \begin{displaymath}
    \int S(x,y) d\gamma  = \EP{S(X,Y)} \leq
    \EP{S(U, Y)} + \epsilon \norm{S} (2\EP{\abs{Y}} +
    \epsilon). 
  \end{displaymath}
  The result follows, because $U\in \mathcal{X}(Y)$ and $\epsilon$ is
  any positive number.
\end{proof}

\section{Optimal plans and dual problem}
\label{sec:dual-problem}

Let $S\in \msym{d}m$, $\nu \in \ps{2}{\real{d}}$, and
$\gamma \in \Gamma(\nu)$.  Theorem~\ref{th:2}\ref{item:6} contains a
new necessary and sufficient condition for $\gamma$ to be an optimal
plan for~\eqref{eq:2}. The result complements Theorem~2.5
in~\cite{KramSirb:24} and is the starting point of the present work.

We begin by introducing some concepts and notations.  A set
$G\subset \real{d}$ is called $S$-\emph{monotone} or
$S$-\emph{positive} if
\begin{displaymath}
  S(x-y,x-y)  \geq 0, \quad x, y
  \in G.   
\end{displaymath}
An $S$-monotone set $G$ is called \emph{maximal} if it is not a strict
subset of an $S$-monotone set.

\begin{Example}[Standard form]
  \label{ex:1}
  If $d=2m$ and
  \begin{displaymath}
    S(x,y) = \sum_{i=1}^m \cbraces{x^i y^{m+i} + x^{m+i}y^i},  \quad
    x,y\in \real{2m}, 
  \end{displaymath}
  then $S \in \msym{2m}m$ and the $S$-monotonicity means the standard
  monotonicity in $\real{2m}=\real{m}\times \real{m}$.
\end{Example}

It has been shown in~\cite[Theorem~2.5]{KramSirb:24}, that a dual
problem to~\eqref{eq:2} is to
\begin{equation}
  \label{eq:3}
  \text{minimize} \quad \EP{\psi_G(Y)} = \int \psi_G(y) d\nu
  \quad\text{over}\quad G \in \mset{S}, 
\end{equation}
where $\mset{S}$ is the family of all maximal $S$-monotone sets and
\begin{displaymath}
  \psi_G(y) \set  \sup_{x\in G} \cbraces{S(x,y) - \frac12 S(x,x)}, \quad 
  y\in \real{d}, 
\end{displaymath}
is the \emph{Fitzpatrick} function in the $S$-space. We refer the
reader to~\cite[Appendix~A]{KramSirb:24} for the basic facts about the
Fitzpatrick functions in the $S$-space. Theorem~2.5
in~\cite{KramSirb:24} shows that an optimal set for~\eqref{eq:3}
always exists and
\begin{displaymath}
  \max_{\gamma \in \Gamma(\nu)} \frac12 \int S(x,y) d\gamma = \min_{G
    \in \mset{S}} \int \psi_G(y) d\nu. 
\end{displaymath}

Let $G\in \mset{S}$. We denote by $P_G$ the projection on $G$ in the
$S$-space:
\begin{equation}
  \label{eq:4}
  \begin{split}
    P_G(y) & \set \argmin_{x\in G} S(x-y,x-y) \\
           & \; = \argmax_{x\in G}
             \cbraces{S(x,y) - \frac12 S(x,x)}, \quad  y\in \real{d}.  
  \end{split}
\end{equation}
Geometrically, $x\in P_G(y)$ if and only if the hyperboloid
\begin{displaymath}
  H_G(y) \set \descr{z \in \real{d}}{S(z,y) - \frac12 S(z,z) =
    \psi_G(y)} 
\end{displaymath}
is \emph{tangent} to $G$ at $x$. If $x\in P_G(y)$, then the vector
$y-x$ is $S$-\emph{regular normal} to $G$ at $x$ in the sense that
\begin{displaymath}
  \limsup_{z\to x, z\in G} \frac{S(y-x,z-x)}{\abs{z-x}} \leq 0. 
\end{displaymath}

\begin{Theorem}
  \label{th:2}
  Let $S \in \msym{d}{m}$ and $\nu \in \ps{2}{\real{d}}$.  For any
  $\gamma \in \Gamma(\nu)$ and $G\in \mset{S}$, the following
  conditions are equivalent:
  \begin{enumerate}[label = {\rm (\alph{*})}, ref={\rm (\alph{*})}]
  \item \label{item:4} $\gamma$ is an optimal plan for~\eqref{eq:2}
    and $G$ is an optimal set for~\eqref{eq:3}.
  \item \label{item:5} $x \in P_G(y)$, $(x,y)\in \supp{\gamma}$.
  \item \label{item:6} $x \in P_G(x) \subset P_G(y)$, $\as{\gamma}$.
  \end{enumerate}
\end{Theorem}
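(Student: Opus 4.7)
The plan is to establish the cyclic chain $(c) \Rightarrow (b) \Rightarrow (a) \Rightarrow (c)$. The unifying tool is the martingale identity $\int S(x,y)\,d\gamma = \int S(x,x)\,d\gamma$, obtained by conditioning on $x$, which together with the duality equality from \cite[Theorem~2.5]{KramSirb:24} yields
\begin{displaymath}
  \int \psi_G\,d\nu - \frac12 \int S(x,y)\,d\gamma = \int \left[\psi_G(y) - S(x,y) + \frac12 S(x,x)\right] d\gamma,
\end{displaymath}
so that saddle points are characterized by the vanishing of this quantity.

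For $(c) \Rightarrow (b)$: since $x \in P_G(x) \subset P_G(y)$ already gives $x \in P_G(y)$ $\gamma$-a.s., I would show that the graph $\{(x,y) : x \in P_G(y)\}$ is closed in $\real{2d}$. This follows because $x \in P_G(y)$ is equivalent to $x \in G$ (closed) together with $S(x-y,x-y) \leq S(y,y) - 2\psi_G(y)$ (the reverse inequality being automatic from $x \in G$), where the right-hand side is upper semicontinuous by the lower semicontinuity of $\psi_G$. Hence $\supp\gamma$ is contained in this closed set. For $(b) \Rightarrow (a)$: on $\supp\gamma$ one has $x \in P_G(y) \subset G$, so the Fitzpatrick identity $\psi_G(y) = S(x,y) - \frac12 S(x,x)$ holds $\gamma$-a.s.; integrating and invoking the martingale identity,
\begin{displaymath}
  \int \psi_G\,d\nu = \int \left[S(x,y) - \frac12 S(x,x)\right] d\gamma = \frac12 \int S(x,y)\,d\gamma,
\end{displaymath}
which by weak duality forces both $\gamma$ and $G$ to be optimal.

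For $(a) \Rightarrow (c)$: the saddle identity forces $\int [\psi_G(y) - S(x,y) + \frac12 S(x,x)]\,d\gamma = 0$. The delicate preliminary is to establish $x \in G$ $\gamma$-a.s., which I would extract from the structural description of optimal primal-dual pairs in \cite{KramSirb:24}. Granted this, the integrand is non-negative (by the defining supremum in $\psi_G$) and integrates to zero, forcing $x \in P_G(y)$ $\gamma$-a.s., while $x \in P_G(x)$ is automatic from $S$-monotonicity of $G$. For the inclusion $P_G(x) \subset P_G(y)$: if $z \in P_G(x)$ (so $S(z-x, z-x) = 0$) and $x \in P_G(y)$, a direct expansion yields $S(z-y, z-y) - S(x-y, x-y) = 2 S(z-x, x-y) \geq 0$; on the other hand, for any measurable selection $z(x) \in P_G(x)$, the martingale property gives $\cEP{S(z(x)-x, x-y)}{x} = S(z(x)-x, x - \cEP{y}{x}) = 0$. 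Combining these, $S(z(x)-x, x-y) = 0$ $\gamma$-a.s., so $z(x) \in P_G(y)$. A Castaing representation of the closed-valued multifunction $P_G(\cdot)$, combined with the closedness of $P_G(y)$, promotes this to $P_G(x) \subset P_G(y)$ $\gamma$-a.s.

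The main obstacle is the auxiliary claim ``$x \in G$ $\gamma$-a.s.'' inside $(a) \Rightarrow (c)$. For indefinite $S$, the integrand $\psi_G(y) - S(x,y) + \frac12 S(x,x)$ can be strictly negative when $x \notin G$, so passing from integral equality to pointwise equality genuinely requires knowing that the $x$-marginal of $\gamma$ lives inside the optimal $G$; this is where the background structural theory from \cite{KramSirb:24} is essential.
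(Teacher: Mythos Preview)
Your argument is correct, and your route for the key implication $(a)\Rightarrow(c)$ is genuinely different from the paper's. The paper imports from \cite[Theorem~2.5]{KramSirb:24} not only $x\in G$ $\as{\gamma}$ but the stronger pointwise condition $y\in Q_G(x)$ $\as{\gamma}$, where $Q_G(x)$ is the largest closed convex subset of $P_G^{-1}(x)$ whose relative interior contains $x$. It then proves a purely geometric lemma: whenever $y\in Q_G(x)$, one has $P_G(x)\subset P_G(y)$ pointwise, the two-sidedness of $Q_G(x)$ (one may take $v=x\pm\epsilon(y-x)$) forcing $S(x-z,y-x)=0$ for every $z\in P_G(x)$. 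Your argument bypasses $Q_G$ entirely: you extract only the one-sided inequality $S(z(x)-x,x-y)\ge 0$ from $x\in P_G(y)$, and then use the martingale constraint $\cEP{y}{x}=x$ to kill the expectation, upgrading to $\as{\gamma}$ equality via nonnegativity and a Castaing representation. The trade-off is that the paper's lemma is a deterministic statement valid for each pair $(x,y)$ with $y\in Q_G(x)$, potentially reusable elsewhere, whereas your conclusion is inherently $\as{\gamma}$ and leans on measurable-selection machinery. One small technicality you leave implicit: to justify $\cEP{S(z(x)-x,x-y)}{x}=0$ when $z(x)$ need not be integrable, truncate on $\{|z(x)-x|\le N\}$ (a $\sigma(x)$-event) and let $N\to\infty$, using that the integrand is nonnegative.
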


The equivalence of items~\ref{item:4} and~\ref{item:5} has been
already established in Theorem~2.5 in
\cite{KramSirb:24}. Item~\ref{item:6} is new.

The proof of Theorem~\ref{th:2} relies on some lemmas. We first verify
the measurability condition used implicitly in item~\ref{item:6}. We
recall that an $F_\sigma$-set is a countable union of closed sets.

\begin{Lemma}
  \label{lem:1}
  Let $S \in \msym{d}{m}$ and $G\in \mset{S}$. Then
  \begin{gather*}
    \graph{P_G} \set
    \descr{(x,y)}{y\in P_G(x)} \text{~is closed}, \\
    \graph{P_G^{-1}} \set
    \descr{(x,y)}{x\in P_G(y)} \text{~is closed}, \\
    B  \set \descr{(x,y)}{x\in P_G(y), \; P_G(x) \not \subset
      P_G(y)} \text{~is an $F_\sigma$-set}. 
  \end{gather*}
  In particular,
  \begin{displaymath}
    U  \set \descr{(x,y)}{x\in P_G(x) \subset P_G(y)} =
    \graph{P_G^{-1}}\setminus B
  \end{displaymath}
  is a Borel set in $\real{2d}$.
\end{Lemma}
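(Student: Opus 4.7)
The plan is to handle the four assertions in order, with the $F_\sigma$ structure of $B$ as the main point. Since $\graph P_G$ is the image of $\graph P_G^{-1}$ under the homeomorphism $(x,y) \mapsto (y,x)$ of $\real{2d}$, it suffices to prove $\graph P_G^{-1}$ is closed. I would use three facts: $G$ is closed (a basic property of maximal $S$-monotone sets, see \cite[Appendix~A]{KramSirb:24}); $(x,y) \mapsto S(x,y) - \frac12 S(x,x)$ is continuous; and $\psi_G$ is lower semicontinuous as a supremum of continuous functions of $y$. For $(x_n, y_n) \to (x,y)$ with $x_n \in P_G(y_n)$: $x_n \in G$ forces $x \in G$; the identity $\psi_G(y_n) = S(x_n, y_n) - \frac12 S(x_n, x_n)$ has a continuous right-hand side, so the chain $\psi_G(y) \leq \liminf_n \psi_G(y_n) = S(x,y) - \frac12 S(x,x) \leq \psi_G(y)$ (the last inequality because $x \in G$) collapses to equality, giving $x \in P_G(y)$.

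For $B$, I introduce $\rho(z,y) \set \psi_G(y) - S(z,y) + \frac12 S(z,z) \geq 0$ for $z \in G$, so that $z \in P_G(y) \Leftrightarrow z \in G$ and $\rho(z,y) = 0$. Writing $C_k \set \descr{z \in \real{d}}{\abs{z}\leq k}$, I set
\[
  B_{k,n} \set \descr{(x,y)}{x \in P_G(y),\; \exists z \in G \cap C_k:\; z \in P_G(x),\; \rho(z,y) \geq 1/n}.
\]
Then $B = \bigcup_{k,n \geq 1} B_{k,n}$: any $(x,y) \in B$ admits $z \in P_G(x) \setminus P_G(y) \subset G$ with $\rho(z,y) > 0$, so $\abs{z} \leq k$ and $\rho(z,y) \geq 1/n$ for $k, n$ sufficiently large; conversely each $B_{k,n} \subset B$ by construction. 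Each $B_{k,n}$ is the projection to $\real{2d}$ of the set
\[
  E_{k,n} \set \descr{(x,y,z) \in \real{3d}}{(x,y) \in \graph P_G^{-1},\; z \in G \cap C_k,\; (x,z) \in \graph P_G,\; \rho(z,y) \geq 1/n},
\]
which is closed by the previous step together with closedness of $G$ and $C_k$ and lower semicontinuity of $(y,z)\mapsto\rho(z,y)$. Since its $z$-fibre lies in the compact set $C_k$, the projection $B_{k,n}$ is closed, so $B$ is an $F_\sigma$-set.

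Finally, the identity $U = \graph P_G^{-1} \setminus B$ rests on the Fitzpatrick-type fact that $x \in G \Rightarrow x \in P_G(x)$, which follows from $S$-monotonicity: for $x, y \in G$, $S(x,y) - \frac12 S(x,x) \leq \frac12 S(y,y)$ with equality at $x = y$, giving $\psi_G(y) = \frac12 S(y,y)$ and $y \in P_G(y)$. Since any $(x,y) \in \graph P_G^{-1}$ has $x \in G$, this makes $x \in P_G(x)$ automatic, and the set-theoretic identity $\graph P_G^{-1}\setminus B = U$ follows; thus $U$ is the difference of a closed set and an $F_\sigma$-set, hence Borel. The hard part is the $F_\sigma$ structure of $B$: because $P_G(x)$ may be unbounded (e.g.\ when $G$ contains $S$-isotropic directions), the witness $z$ of $P_G(x) \not\subset P_G(y)$ carries no a priori size bound, which is precisely why the double indexing by $(k, n)$ is needed to rewrite $B$ as a countable union of projections with compact fibre.
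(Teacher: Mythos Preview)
Your approach matches the paper's: prove closedness of $\graph P_G^{-1}$ directly from closedness of $G$ and lower semicontinuity of $\psi_G$, then exhibit $B$ as a countable union of projections (along a compact $z$-fibre) of closed subsets of $\real{3d}$. The paper indexes by a single $n$ with the constraint $\tfrac1n \leq |x-z| \leq n$ where you use the pair $(k,n)$ with $|z| \leq k$; both serve the same compactness purpose.

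There is one slip in your justification. Lower semicontinuity of $\rho$ gives closed \emph{sub}level sets, not the superlevel set $\{\rho(z,y) \geq 1/n\}$ you need for $E_{k,n}$ to be closed. The repair is immediate: on the set where $(x,y) \in \graph P_G^{-1}$ you already have the identity $\psi_G(y) = S(x,y) - \tfrac12 S(x,x)$, so the constraint $\rho(z,y) \geq 1/n$ can be rewritten as
\[
  S(x,y) - \tfrac12 S(x,x) - S(z,y) + \tfrac12 S(z,z) \geq \tfrac1n,
\]
which is a closed condition in $(x,y,z)$ by continuity. Intersecting with the other closed constraints then gives closedness of $E_{k,n}$. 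This is exactly how the paper phrases its closed sets $B_n$, writing $S(x,y)-\tfrac12 S(x,x) = \psi_G(y) \geq S(z,y)-\tfrac12 S(z,z)+\tfrac1n$ to make the continuity manifest.
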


\begin{proof}
  Direct arguments show that $P_G$ has a closed graph. Then,
  trivially, the graph of the inverse multifunction $P_G^{-1}$ is also
  closed.

  We can write $B = \cup_n B_n$, where $B_n$ consists of those
  $(x,y) \in \graph{P_G^{-1}}$ for which there exists $z \in P_G(x)$
  such that
  \begin{displaymath}
    \frac1n \leq \abs{x-z} \leq n, \quad
    S(x,y)-\frac12S(x,x)
    =\psi_G(y)\geq S(z,y)-\frac12S(z,z) +\frac1n.  
  \end{displaymath}
  Elementary arguments show that $B_n$ is a closed set. Hence, $B$ is
  an $F_\sigma$-set.
\end{proof}

For $x\in G$, we denote by $Q_G(x)$ the largest closed convex subset
of $P^{-1}_G(x) \set \descr{y}{x \in P_G(y)}$ whose relative interior
contains $x$. By Lemma~2.12 in \cite{KramSirb:24}, $y \in Q_G(x)$ if
and only if there exist $z \in P^{-1}_G(x)$ and $t\in (0,1)$ such that
$x = ty + (1-t) z$.  If $y\in Q_G(x)$, then the vector $y-x$ is
$S$-\emph{orthogonal} to $G$ at $x$ in the sense that
\begin{displaymath}
  \lim_{u\to x, u\in G} \frac{S(y-x,u-x)}{\abs{u-x}} = 0. 
\end{displaymath}
Theorem~2.5 in \cite{KramSirb:24} shows that $\gamma \in \Gamma(\nu)$
is an optimal plan for~\eqref{eq:2} and $G \in \mset{S}$ is an optimal
set for~\eqref{eq:3} if and only if
\begin{equation}
  \label{eq:5}
  x \in G \text{~and~} y \in Q_G(x), \; \as{\gamma}.
\end{equation}

\begin{Lemma}
  \label{lem:2}
  Let $S\in \msym{d}m$ and $G\in \mset{S}$. If $x \in G$ and
  $y \in Q_G(x)$, then $P_G(x) \subset P_G(y)$.
\end{Lemma}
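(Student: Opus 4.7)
The plan is to combine a geometric fact---that the full segment $[x,x']$ lies in $G$ for any $x'\in P_G(x)$---with the $S$-orthogonality of $y-x$ to $G$ at $x$ noted just before the statement. To begin, since $x\in G$, the identity $S(u,x)-\frac12 S(u,u)=\frac12 S(x,x)-\frac12 S(x-u,x-u)$ together with the $S$-monotonicity of $G$ yields $\psi_G(x)=\frac12 S(x,x)$, and hence
\begin{displaymath}
  P_G(x)=\{u\in G : S(x-u,x-u)=0\}.
\end{displaymath}
In particular, $x\in P_G(x)$. Fix any $x'\in P_G(x)$; we aim to show $x'\in P_G(y)$. The case $x'=x$ is immediate, since $y\in Q_G(x)\subset P_G^{-1}(x)$ already gives $x\in P_G(y)$, so assume $x'\neq x$.

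First I would show $[x,x']\subset G$. For $v\in G$ and $\lambda\in[0,1]$, expanding $S(\lambda x+(1-\lambda)x'-v,\,\lambda x+(1-\lambda)x'-v)$ and using $S(x-x',x-x')=0$ collapses it to the linear-in-$\lambda$ expression $S(x'-v,x'-v)+2\lambda S(x-x',x'-v)$. This is nonnegative at $\lambda=0$ and $\lambda=1$ by the $S$-monotonicity of $G$ applied to the pairs $(x',v)$ and $(x,v)$, hence nonnegative on $[0,1]$. Any two points of the segment differ by a scalar multiple of $x-x'$, which has $S$-scalar-square zero, so the segment is internally $S$-compatible. Maximality of $G$ then forces $[x,x']\subset G$.

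Next, the $S$-orthogonality of $y-x$ to $G$ at $x$, evaluated along the curve $u_s := x+s(x'-x)\in G$ for $s\in(0,1]$, reads
\begin{displaymath}
  0 = \lim_{s\to 0^+}\frac{S(y-x,\,u_s-x)}{|u_s-x|}=\frac{S(y-x,\,x'-x)}{|x'-x|},
\end{displaymath}
the quotient being $s$-independent. Hence $S(y-x,x'-x)=0$.

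To finish, $x\in P_G(y)$ gives $\psi_G(y)=S(x,y)-\frac12 S(x,x)$. The identity $S(x',x')-S(x,x)=S(x'-x,x'+x)$ combined with $S(x-x',x-x')=0$ yields $S(x'-x,x'+x)=2S(x'-x,x)$, so
\begin{displaymath}
  \bigl[S(x',y)-\tfrac12 S(x',x')\bigr]-\bigl[S(x,y)-\tfrac12 S(x,x)\bigr]=S(x'-x,y-x)=0.
\end{displaymath}
Therefore $x'$ attains $\psi_G(y)$, i.e., $x'\in P_G(y)$. The only genuinely nontrivial step is the inclusion $[x,x']\subset G$; everything else is direct algebra once the direction $x-x'$ is known to be $S$-isotropic.
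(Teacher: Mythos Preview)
Your proof is correct, but it takes a different route from the paper's.

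The paper works entirely on the $Q_G$ side. It exploits directly the relative-interior characterization of $Q_G(x)$: since $x$ lies in the relative interior of $Q_G(x)$, one has $x\pm\epsilon(y-x)\in Q_G(x)\subset P_G^{-1}(x)$ for some $\epsilon>0$. A one-line computation shows that for any $v\in Q_G(x)$ and $z\in P_G(x)$,
\[
0\le \psi_G(v)-\bigl(S(z,v)-\tfrac12 S(z,z)\bigr)=S(x-z,\,v-x).
\]
Taking $v=x\pm\epsilon(y-x)$ yields $S(x-z,y-x)=0$, and taking $v=y$ then gives $z\in P_G(y)$. No segment inclusion and no limiting argument are needed.

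Your argument instead works on the $G$ side: you first prove the auxiliary geometric fact $[x,x']\subset G$ for $x'\in P_G(x)$, and then invoke the $S$-orthogonality of $y-x$ to $G$ at $x$ along that segment to recover $S(y-x,x'-x)=0$. The final algebraic step is the same in both proofs. Your segment inclusion is a pleasant observation in its own right, but note that the $S$-orthogonality remark you cite is itself proved (when one unwinds it) by exactly the $\pm\epsilon$ trick the paper uses directly. So in effect you are routing the paper's key step through the orthogonality statement and then adding the segment argument on top. The paper's version is shorter and self-contained; yours isolates a structural fact about $P_G(x)$ that could be reused elsewhere.
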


\begin{proof}
  Let $v \in Q_G(x)$ and $z \in P_G(x)$.  Then $x \in P_G(v)$ and
  \begin{align*}
    0 & \leq \psi_G(v) - \cbraces{S(z,v) - \frac12 S(z,z)} \\
      & = S(x,v) - \frac12 S(x,x) - S(z,v) + \frac12 S(z,z)
    \\
      & = \frac12 S(x-z,x-z) + S(x-z,v-x) = S(x-z,v-x).  
  \end{align*}
  Choosing $v = x \pm \epsilon (y-x)$ for some $\epsilon \in (0,1)$,
  which is possible by the construction of $Q_G(x)$, we obtain
  \begin{displaymath}
    S(x-z,y-x) = 0. 
  \end{displaymath}
  Taking $v=y$, we deduce
  \begin{displaymath}
    \psi_G(y) =  S(z,y) - \frac12 S(z,z). 
  \end{displaymath}
  Hence, $z\in P_G(y)$, as required.
\end{proof}

\begin{proof}[Proof of Theorem~\ref{th:2}.]
  Theorem~2.5 in \cite{KramSirb:24} shows the equivalence
  of~\ref{item:4}, \ref{item:5}, and~\eqref{eq:5}. The result now
  follows from the implications:
  \begin{displaymath}
    x\in G, y \in Q_G(x) \implies x \in P_G(x) \subset P_G(y) \implies
    x \in P_G(y), 
  \end{displaymath}
  where the first assertion has been proved in Lemma~\ref{lem:2} and
  the second one is trivial.
\end{proof}

\section{Existence of optimal maps}
\label{sec:exist-solut-map}

Let $S\in \msym{d}m$ and $Y\in \lsp{2}{d}$. By Theorem~\ref{th:1}, the
map and plan problems \eqref{eq:1} and \eqref{eq:2} have identical
values provided that $\nu = \law{Y}$ is atomless. The main results of
this section, Theorems~\ref{th:3} and~\ref{th:4}, show that an optimal
map exists if $\nu(F) = 0$ for every $c-c$ surface (the graph of a
difference of two convex functions) $F$ having strictly positive
normals in the $S$-space.

Let $G\in \mset{S}$ and $P_G$ be its projection multifunction defined
in~\eqref{eq:4}.  A point
$y\in \dom{P_G} \set \descr{y}{P_G(y) \not = \emptyset}$ is called
\emph{singular}, if $P_G(y)$ is not a singleton. We decompose the set
of singular points of $P_G$ as
\begin{align*}
  \Sigma(P_G) &\set \descr{x\in \dom{P_G}}{P_G(x) \text{~is not a
                point}} = \Sigma_0(P_G) \cup
                \Sigma_1(P_G), \\
  \Sigma_0(P_G) & \set \descr{x\in \Sigma(P_G)}{S(y_1-y_2,y_1-y_2)=0
                  \text{~for \emph{all}~} y_1,y_2 \in P_G(x)}, \\
  \Sigma_1(P_G) & \set \descr{x\in \Sigma(P_G)}{S(y_1-y_2,y_1-y_2)>0
                  \text{~for \emph{some}~} y_1,y_2 \in P_G(x)}. 
\end{align*}
Lemma~\ref{lem:3} contains an equivalent description of
$\Sigma_0(P_G)$.  Figure~\ref{fig:1} provides an illustration for the
standard $S \in \msym{2}{1}$.

\begin{figure}
  \centering
  \begin{tikzpicture}[scale = 0.5]
   
    \draw[dashed, thin] (-3.6,-0.728) node [right] {$x_1$} --(-1,
    -4.472) node [right] {$y$} -- (-7,-2.8) node [above left] {$x_2$};

    \filldraw [black] (-3.6,-0.728) circle [radius=2pt] (-1,-4.472)
    circle [radius=2pt] (-7,-2.8) circle [radius=2pt];

    \draw[domain=-10:-2.12,smooth,variable=\x,black] plot
    ({\x},{-9.9344/(\x+1) - 4.472}); \draw (-2.3,3.016) node [left]
    {$H_G(y)$};

    \draw[thick,domain=-4.15:4.5,smooth,variable=\x,blue] plot
    ({\x},{(\x/3)^3 + 1}); \draw (4, 3.37) node [above left] {$G$};

    \draw[thick,domain=-10:-6,smooth,variable=\x,blue] plot
    ({\x},{-2.8 +0.276*(\x+7) - 0.12 * (\x+7)^2});

    \draw[thick,domain=-6:-4.15,smooth,variable=\x,blue] plot
    ({\x},{-2.64 +0.23*(\x+6)^(2.4)});
    
    \draw[ultra thick] (-0.75,1) node [above] {$z_1$} -- (0.75,1) node
    [above] {$z_2$};
    
    \filldraw [black] (-0.75,1) circle [radius=2pt] (0.75,1) circle
    [radius=2pt];

  \end{tikzpicture}
  \caption{The figure corresponds to the standard $S\in
    \msym{2}{1}$. The hyperbola $H_G(y)$ with focus at $y$ is tangent
    to $G$ at $x_1$ and $x_2$.  The point $y$ is singular and belongs
    to $\Sigma_1(P_G)$. The horizontal segment $[z_1,z_2] \subset G$
    is an $S$-isotropic set contained in $\Sigma_0(P_G)$. The vector
    $y-x_i$ is $S$-regular normal to $G$ at $x_i$, $i=1,2$.}
  \label{fig:1}
\end{figure}

\begin{Example}
  \label{ex:2}
  Let $d=2$ and $S$ be the standard bilinear form from
  Example~\ref{ex:1}:
  \begin{displaymath}
    S(x,y)=S((x_1,x_2),(y_1,y_2)) = x_1y_2 + x_2y_1.
  \end{displaymath}
  As shown in Example~4.9 of~\citet{KramSirb:22b}, for
  $G\in \mset{S}$, the singular set $\Sigma_0(G)$ is a countable union
  of horizontal and vertical line segments of $G$. The singular set
  $\Sigma_1(G)$ is contained in a countable union of the graphs of
  functions
  \begin{displaymath}
    x_2 = h(x_1) = g_1(x_1) - g_2(x_1), \quad x_1 \in \real{},  
  \end{displaymath}
  where $g_1$ and $g_2$ are convex functions and
  $\epsilon \leq - h'(t) \leq 1/\epsilon$, for a constant
  $\epsilon = \epsilon(h)>0$ and all real $t$ where $h$ is
  differentiable. In particular, $h$ and its inverse $h^{-1}$ are
  strictly decreasing Lipschitz functions.
\end{Example}

The projection multifunction $P_G$ takes values in the closed subsets
of $G$. Following~\citet[p.~59]{CastValad:77}, we then denote by
$\sigma\cbraces{P_G}$ the $\sigma$-algebra generated by the pre-images
$P_G^{-1}(U) \set \descr{y}{P_G(y)\cap U \not=\emptyset}$ of open sets
$U\subset \real{d}$.  If $Y$ is a $d$-dimensional random variable,
then naturally,
\begin{align*}
  \sigma(P_G(Y)) & \set \descr{Y^{-1}(A)}{A \in \sigma(P_G)}\\
                 & \;
                   =\sigma\cbraces{\descr{\omega}{P_G(Y(\omega))\cap
                   U\not=\emptyset}, \,
                   U\text{~is an open set in~}\real{d}}.
\end{align*}
Lemma~\ref{lem:5} shows that every $A\in \sigma(P_G)$ is a Borel set
in $\real{d}$. It follows that $\sigma(P_G(Y))$ is a
sub-$\sigma$-algebra of $\sigma(Y)$ and a conditional expectation with
respect to $\sigma(P_G(Y))$ is a $Y$-measurable random variable.
Lemma~\ref{lem:5} also proves that the singular sets $\Sigma_0(P_G)$
and $\Sigma_1(P_G)$ belong to $\sigma(P_G)$. In particular, they are
Borel sets in $\real{d}$.

We recall that an optimal set $G\in \mset{S}$ for the dual
problem~\eqref{eq:3} always exists.

\begin{Theorem}
  \label{th:3}
  Let $S\in \msym{d}{m}$, $Y \in \mathcal{L}_2(\real{d})$, and denote
  $\nu \set \law{Y}$.  Let $G\in \mset{S}$ be an optimal set
  for~\eqref{eq:3} and assume that $Y \not\in \Sigma_1(P_G)$, that is,
  $\nu \cbraces{\Sigma_1(P_G)}=0$. Then
  \begin{displaymath}
    X \set \cEP{Y}{P_G(Y)} \set \cEP{Y}{\sigma(P_G(Y))}
  \end{displaymath}
  is an optimal map for~\eqref{eq:1} and the law of $(X,Y)$ is an
  optimal plan for~\eqref{eq:2}.

  If $Z \in \mathcal{L}_2(\real{d})$ and the law of $(Z,Y)$ is an
  optimal plan (in particular, if $Z$ is an optimal map), then
  \begin{gather}
    \label{eq:6}
    Z \in P_G(Z) = P_G(Y), \\
    \label{eq:7}
    \text{$X$ is $Z$-measurable, $X = \cEP{Z}{X}$, and $S(Z-X,Z-X) = 0$.}
  \end{gather}
\end{Theorem}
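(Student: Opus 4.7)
The plan is to show that $X := \cEP{Y}{\sigma(P_G(Y))}$ fits the optimality criterion of Theorem~\ref{th:2} by establishing $X \in P_G(Y)$ almost surely. Since $\graph P_G^{-1}$ is closed (Lemma~\ref{lem:1}), this inclusion forces $\supp \law{X,Y} \subset \graph P_G^{-1}$, so Theorem~\ref{th:2}\ref{item:5} will deliver optimality of $\law{X,Y}$. The preliminary steps are routine: Lemma~\ref{lem:5} ensures $\sigma(P_G(Y)) \subset \sigma(Y)$ so $X$ is $Y$-measurable, and since $\sigma(X) \subset \sigma(P_G(Y))$, the tower property gives $\cEP{Y}{X} = \cEP{\cEP{Y}{\sigma(P_G(Y))}}{X} = \cEP{X}{X} = X$, hence $X \in \mathcal{X}(Y)$.

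The geometric heart of the existence argument is the observation that on the full-measure event $\{Y\notin\Sigma_1(P_G)\}$, the set $P_G(Y)$ is closed, convex, and either a singleton or pairwise $S$-isotropic. Convexity is the only non-trivial point: for $x_1, x_2 \in P_G(y)$ with $S(x_1-x_2, x_1-x_2) = 0$, the isotropy identity $S(x_1-g, x_2-g) = \frac12(S(x_1-g,x_1-g) + S(x_2-g,x_2-g))$ leads to
\begin{displaymath}
S(tx_1 + (1-t)x_2 - g,\, tx_1 + (1-t)x_2 - g) = t\, S(x_1-g, x_1-g) + (1-t)\, S(x_2-g, x_2-g) \geq 0
\end{displaymath}
for every $g \in G$ and $t \in [0,1]$. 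Maximality of $G$ then forces the segment $[x_1, x_2]$ into $G$, and setting $g = y$ (where the $S$-squares achieve the common minimum) shows the segment stays in $P_G(y)$.

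To conclude existence, I enlarge the probability space and take $Z$ with $\law{Z,Y}$ an optimal plan (which exists by Theorem~2.5 of \cite{KramSirb:24}); Theorem~\ref{th:2}\ref{item:6} gives $Z \in P_G(Z) \subset P_G(Y)$ almost surely. Pairwise $S$-isotropy of $P_G(Y)$ implies that every $z \in P_G(Y)$ satisfies $S(z-Z,z-Z) = 0$ with $z \in G$, hence $z \in P_G(Z)$; thus $P_G(Z) = P_G(Y)$ almost surely. This equality yields $\sigma(P_G(Y)) = \sigma(P_G(Z)) \subset \sigma(Z)$, and combining the backward martingale $\cEP{Y}{Z} = Z$ with the tower property gives $X = \cEP{Z}{\sigma(P_G(Y))}$. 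Since $Z \in P_G(Y)$ almost surely and $P_G(Y)$ is a closed, convex, $\sigma(P_G(Y))$-measurable random set, the conditional version of Jensen's inequality for random closed convex sets places $X$ in $P_G(Y)$ almost surely, finishing the optimality proof.

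For the structural identities \eqref{eq:6}--\eqref{eq:7} applied to an arbitrary optimal $Z$, the same argument gives $P_G(Z) = P_G(Y)$, which is~\eqref{eq:6}; $X$ is $Z$-measurable because $\sigma(P_G(Y)) \subset \sigma(Z)$; the backward martingale $\cEP{Z}{X} = X$ follows from $\cEP{Z}{X} = \cEP{\cEP{Y}{Z}}{X} = \cEP{Y}{X} = X$ once one invokes the $Z$-measurability of $X$; and $S(Z-X, Z-X) = 0$ is immediate because $Z$ and $X$ both lie in the $S$-isotropic set $P_G(Y)$. The main obstacle throughout is the convexity step in the second paragraph: it is precisely the interplay between maximal $S$-monotonicity of $G$ and $S$-isotropy that makes $\nu(\Sigma_1(P_G))=0$ the sharp hypothesis here, because in the presence of a point of $\Sigma_1(P_G)$ the conditional expectation could escape $P_G(Y)$ and the conditional Jensen step would fail.
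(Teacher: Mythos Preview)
Your argument is correct and follows the same overall architecture as the paper's proof: enlarge to an optimal $Z$, use Theorem~\ref{th:2}\ref{item:6} to get $Z\in P_G(Z)\subset P_G(Y)$, upgrade to $P_G(Z)=P_G(Y)$ via pairwise $S$-isotropy, deduce $X=\cEP{Z}{\sigma(P_G(Y))}$, and then push $X$ into $P_G(Y)$ by a convexity-plus-conditioning argument. Your inline convexity computation is exactly Lemma~\ref{lem:3}, which the paper states separately.

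Two points of comparison. First, where you invoke ``the conditional version of Jensen's inequality for random closed convex sets'' as a black box, the paper carries this out explicitly via support functions: it shows $g_s(y)=\sup_{x\in P_G(y)}\ip{s}{x}$ is $\sigma(P_G)$-measurable (Lemma~\ref{lem:5}\ref{item:11}), then uses $\ip{s}{Z}\leq g_s(Y)$ and the $\sigma(P_G(Y))$-measurability of $g_s(Y)$ to get $\ip{s}{X}\leq g_s(Y)$ for all $s$. This is precisely the content of your ``$\sigma(P_G(Y))$-measurable random set'' claim, so you are not wrong, just less self-contained; the support-function route is the standard way to make that step rigorous. Second, your argument for $S(Z-X,Z-X)=0$ is actually cleaner than the paper's: you observe that $X$ and $Z$ both lie in the $S$-isotropic set $P_G(Y)$ and are done, whereas the paper shows $S(Z-X,Z-X)\geq 0$ from $X,Z\in G$ and then computes $\EP{S(Z-X,Z-X)}=\EP{S(Z,Y)}-\EP{S(X,Y)}=0$ via the tower property and optimality.
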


Relations~\eqref{eq:6} and~\eqref{eq:7} show that all optimal maps
take values in $G$, have the same $S$-projection on $G$ as $Y$, and
that $X$ generates the smallest $\sigma$-algebra among them.

The direct use of Theorem~\ref{th:3} requires the knowledge of a dual
minimizer $G$. An obvious sufficient condition is to assume that
$Y\not\in \Sigma_1(P_G)$ for \emph{every} $G\in \mset{\real{d}}$ such
that $\EP{\psi _G(Y)}<\infty$. A stronger, but more explicit
sufficient condition is stated in Theorem~\ref{th:4}.

The proof of Theorem~\ref{th:3} relies on some lemmas.

\begin{Lemma}
  \label{lem:3}
  Let $S\in \msym{d}m$, $G\in \mset{S}$, and $y\in \dom{P_G}$. Then
  $P_G(y)$ is a convex set if and only if
  \begin{displaymath}
    S(u-v,u-v)= 0, \quad u,v \in P_G(y). 
  \end{displaymath}
  In other words, the singular set $\Sigma_0(P_G)$ admits the
  equivalent description:
  \begin{align*}
    \Sigma_0(P_G) = \descr{y\in \Sigma(P_G)}{P_G(y) \text{~is a convex
    set}}. 
  \end{align*}
\end{Lemma}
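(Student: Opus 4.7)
The statement is an ``iff'' about a single point $y \in \dom{P_G}$, and the second sentence follows immediately by unwinding the definition of $\Sigma_0(P_G)$, so the entire content is the equivalence of convexity of $P_G(y)$ with the isotropy relation $S(u-v,u-v)=0$ for all $u,v\in P_G(y)$. I would prove it by a direct computation exploiting the identity
\begin{equation*}
  2S(u-g,v-g) = S(u-g,u-g) + S(v-g,v-g) - S(u-v,u-v),
\end{equation*}
together with the defining property $P_G(y) = \argmax_{x\in G}\bigl(S(x,y) - \tfrac12 S(x,x)\bigr)$ and the maximality of $G$.

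For the direction $(\Leftarrow)$, assume $S(u-v,u-v)=0$ for all $u,v\in P_G(y)$. Fix $u,v\in P_G(y)$ and $t\in[0,1]$, and set $w \set tu + (1-t)v$. The isotropy assumption gives $2S(u,v) = S(u,u)+S(v,v)$, and substituting this into the bilinear expansion of $S(w,w)$ collapses the quadratic in $t$ into the affine expression $S(w,w) = tS(u,u)+(1-t)S(v,v)$. Combined with the linearity of $S(w,y)$ in $w$, this yields $S(w,y) - \tfrac12 S(w,w) = t\psi_G(y) + (1-t)\psi_G(y) = \psi_G(y)$. It remains to verify $w\in G$; this is the step I expect to be the main obstacle, since $G$ is only assumed maximal $S$-monotone. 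I would use the polarization identity above with $g\in G$ arbitrary: the isotropy of $u-v$ removes the last term, yielding $S(w-g,w-g) = tS(u-g,u-g) + (1-t)S(v-g,v-g) \geq 0$, because $u,v\in G$. By maximality of $G$, this forces $w\in G$, hence $w\in P_G(y)$, proving convexity.

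For the direction $(\Rightarrow)$, suppose $P_G(y)$ is convex. Pick $u,v\in P_G(y)$ and set $w_t \set tu+(1-t)v\in P_G(y)$ for $t\in[0,1]$. Writing out $S(w_t,y) - \tfrac12 S(w_t,w_t) = \psi_G(y)$ and subtracting the same identities at $t=0$ and $t=1$ (weighted by $1-t$ and $t$ respectively), the linear terms cancel and what remains is
\begin{equation*}
  tS(u,u) + (1-t)S(v,v) = t^2 S(u,u) + 2t(1-t)S(u,v) + (1-t)^2 S(v,v),
\end{equation*}
which simplifies to $t(1-t)\,S(u-v,u-v) = 0$. Taking $t=1/2$ gives the desired isotropy.

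Finally, the second sentence of the lemma is immediate: by definition $\Sigma_0(P_G) = \{y\in\Sigma(P_G) : S(y_1-y_2,y_1-y_2)=0 \text{ for all } y_1,y_2\in P_G(y)\}$, which by the equivalence just proved coincides with $\{y\in\Sigma(P_G) : P_G(y) \text{ is convex}\}$.
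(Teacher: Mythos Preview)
Your proof is correct and follows essentially the same approach as the paper's: both directions hinge on the identity relating $S(w,y)-\tfrac12 S(w,w)$ to $\psi_G(y)$ and $S(u-v,u-v)$, and the $(\Leftarrow)$ direction uses maximality of $G$ via the computation $S(g-w,g-w) = tS(g-u,g-u)+(1-t)S(g-v,g-v)\geq 0$. The only cosmetic difference is that the paper works with the midpoint $t=\tfrac12$ (implicitly relying on closedness of $P_G(y)$ to pass from midpoint convexity to convexity), whereas you handle all $t\in[0,1]$ directly.
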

\begin{proof}
  Let $u,v \in P_G(y)$ and denote $w = \frac12(u+v)$. We have that
  \begin{align*}
    \psi_G(y) & = S(y,u) - \frac12 S(u,u) = S(y,v) - \frac12 S(v,v) \\
              & = S(y,w) - \frac12 S(w,w) - \frac14 \cbraces{S(u,u) +
                S(v,v) - 2 S(w,w)} \\
              & = S(y,w) - \frac12 S(w,w) - \frac18 S(u-v,u-v).
  \end{align*}
  In particular,
  \begin{displaymath}
    \psi_G(y) =  S(y,w) - \frac12 S(w,w) \iff S(u-v,u-v) = 0. 
  \end{displaymath}

  If $P_G(y)$ is convex, then $w\in P_G(y)$, which implies that
  $S(u-v,u-v) = 0$.

  Assume now that $S(u-v,u-v) = 0$. Direct computations show that
  \begin{displaymath}
    S(x-w,x-w)  = \frac12 \cbraces{S(x-u,x-u) + S(x-v,x-v)} \geq 0,
    \quad x\in G.
  \end{displaymath}
  As $G \in \mset{S}$, we obtain that $w\in G$ and then that
  $w\in P_G(y)$.
\end{proof}

We state next a version of the classical result on measurability of
multifunctions and their measurable selections.

\begin{Lemma}
  \label{lem:4}
  Let $\mmap{T}{\real{m}}{\real{n}}$ be a multifunction whose graph
  \begin{displaymath}
    \graph{T} \set \descr{(u,v)}{u\in \real{m}, v\in T(u)}
  \end{displaymath}
  is closed. Then
  \begin{enumerate}[label = {\rm (\alph{*})}, ref={\rm (\alph{*})}]
  \item \label{item:7} The pre-image
    $T^{-1}(B) \set \descr{u\in \real{m}}{T(u) \cap B \not=
      \emptyset}$ of every $F_\sigma$-set $B$ is an $F_\sigma$-set.
  \item \label{item:8} The domain
    $D \set \descr{u\in \real{m}}{T(u)\not = \emptyset}$ of $T$ is an
    $F_\sigma$-set.
  \item \label{item:9} There exists a Borel function
    $\map{f}{D}{\real{n}}$ such that $f(u) \in T(u)$, $u\in D$.
  \end{enumerate}
\end{Lemma}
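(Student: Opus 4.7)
The plan is to prove the three items in the order stated: \ref{item:7} is the engine, \ref{item:8} is an immediate corollary, and \ref{item:9} reduces to a standard measurable selection theorem once the preimage property is in hand.

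For \ref{item:7}, the first step is the sharper observation that $T^{-1}(K)$ is \emph{closed} whenever $K \subset \real{n}$ is compact. This is a one-line sequential compactness argument: given $u_j \to u$ with $v_j \in T(u_j) \cap K$, pass to a subsequential limit $v_j \to v \in K$ and invoke closedness of $\graph T$ to conclude $v \in T(u)$, so $u \in T^{-1}(K)$. Every closed $F \subset \real{n}$ is $\sigma$-compact via the exhaustion $F = \bigcup_j \cbraces{F \cap \overline{B(0,j)}}$, so $T^{-1}(F)$ is a countable union of closed sets, hence $F_\sigma$. For a general $F_\sigma$-set $B = \bigcup_i F_i$ with each $F_i$ closed, the identity $T^{-1}\cbraces{\bigcup_i F_i} = \bigcup_i T^{-1}(F_i)$ finishes the proof.

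Item \ref{item:8} then drops out on applying \ref{item:7} with $B = \real{n}$, which is itself $F_\sigma$ (indeed $\sigma$-compact).

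For \ref{item:9}, I would invoke the Kuratowski and Ryll-Nardzewski measurable selection theorem as presented in~\cite{CastValad:77}: a multifunction with closed nonempty values whose preimages of open sets are Borel admits a Borel selection on its domain. Closed-valuedness is immediate, since $T(u)$ is the $u$-section of the closed set $\graph T$. The preimage condition is handed over by \ref{item:7}, because every open subset of $\real{n}$ is $F_\sigma$, hence has Borel preimage. As a self-contained alternative one could build $f$ explicitly as a lexicographic minimizer inside $T(u)$---first restrict to the minimum-norm slice, then minimize the first coordinate, then the second, and so on---using at each stage that the infimum is attained (the relevant restrictions are intersections of $T(u)$ with closed balls or half-spaces, which by the compactness argument of \ref{item:7} have closed preimages under $T$) and that the resulting sub-multifunction still has Borel-measurable preimages of open sets. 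The only nontrivial bookkeeping is verifying that this Borel inheritance persists through the $n$ iterations, and this is handled at each step by the same sequential-compactness reasoning as in \ref{item:7}.
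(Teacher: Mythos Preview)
Your proposal is correct and follows essentially the same route as the paper: show $T^{-1}(K)$ is closed for compact $K$, use that every $F_\sigma$-set in $\real{n}$ is a countable union of compacts to get~\ref{item:7}, specialize to $B=\real{n}$ for~\ref{item:8}, and then invoke the measurable selection theorem from~\cite{CastValad:77} (noting that open sets are $F_\sigma$, hence have Borel preimages) for~\ref{item:9}. The only difference is that you spell out the sequential-compactness argument and offer an explicit lexicographic alternative for the selection; the paper simply states the closedness of $T^{-1}(C)$ and cites Theorem~III.6 of~\cite{CastValad:77} directly.
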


\begin{proof}
  As $\graph{T}$ is closed, $T^{-1}(C)$ is a closed set for every
  compact $C$. Since every $F_\sigma$-set $B$ is a countable union of
  compacts $(C_n)$ and
  \begin{displaymath}
    T^{-1}(B) = T^{-1}\Bigl(\bigcup_n C_n\Bigr) = \bigcup_n
    T^{-1}\cbraces{C_n},
  \end{displaymath}
  we obtain~\ref{item:7}.  Taking $B=\real{n}$ we prove~\ref{item:8}.
  As every open set $U$ is an $F_\sigma$-set, we deduce that
  $T^{-1}(U)$ is an $F_\sigma$-set and, in particular, a Borel
  set. Having a closed graph, $T$ takes values in the closed subsets
  of (the complete separable metric space) $\real{d}$. The measurable
  selection theorem from \cite[Theorem~III.6, page~65]{CastValad:77}
  yields~\ref{item:9}.
\end{proof}

\begin{Lemma}
  \label{lem:5}
  Let $S\in \msym{d}m$ and $G\in \mset{S}$. Then
  \begin{enumerate}[label = {\rm (\alph{*})}, ref={\rm (\alph{*})}]
  \item \label{item:10} Every $A\in \sigma(P_G)$ is a Borel set in
    $\real{d}$.
  \item \label{item:11} For every $s\in \real{d}$, the function
    \begin{displaymath}
      g_s(y) \set \sup_{x\in P_G(y)} \ip{s}{x} \in \realextplusminus,
      \quad y \in \real{d}, 
    \end{displaymath}
    is $\sigma(P_G)$-measurable, where we used the usual convention
    that $\sup$ over an empty set is $-\infty$.
  \item \label{item:12} The singular sets $\Sigma_0(P_G)$ and
    $\Sigma_1(P_G)$ belong to $\sigma(P_G)$.
  \end{enumerate}
\end{Lemma}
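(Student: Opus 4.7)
The three parts all follow from the closed graph of $P_G$ established in Lemma~\ref{lem:1} together with the second countability of $\real{d}$. My plan is to view $P_G$ as the multifunction $y\mapsto P_G(y)$, whose graph $\descr{(y,x)}{x\in P_G(y)}$ is closed (this is $\graph{P_G^{-1}}$ in the naming convention of Lemma~\ref{lem:1}), so that Lemma~\ref{lem:4} applies to it directly.

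For part~\ref{item:10}, since every open $U\subset \real{d}$ is an $F_\sigma$-set, Lemma~\ref{lem:4}\ref{item:7} gives that each generator $P_G^{-1}(U)$ of $\sigma(P_G)$ is an $F_\sigma$-set and in particular Borel. Hence $\sigma(P_G)$ is a sub-$\sigma$-algebra of the Borel $\sigma$-algebra of $\real{d}$. For part~\ref{item:11}, for every real $c$ the open half-space $H_c \set \descr{x}{\ip{s}{x}>c}$ satisfies
\begin{displaymath}
  \descr{y}{g_s(y) > c} = \descr{y}{P_G(y)\cap H_c \not= \emptyset} = P_G^{-1}(H_c) \in \sigma(P_G),
\end{displaymath}
which is exactly what is needed for the $\sigma(P_G)$-measurability of the $\realextplusminus$-valued function $g_s$.

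Part~\ref{item:12} is the technical core. I would introduce the product multifunction $T\colon y\mapsto P_G(y)\times P_G(y)$ from $\real{d}$ into $\real{2d}$ and observe that, for every open rectangle $U\times V\subset \real{2d}$,
\begin{displaymath}
  T^{-1}(U\times V) = P_G^{-1}(U) \cap P_G^{-1}(V) \in \sigma(P_G).
\end{displaymath}
By second countability of $\real{2d}$, any open $W\subset \real{2d}$ is a countable union of such rectangles, and hence $T^{-1}(W)\in\sigma(P_G)$. Applied to the open sets
\begin{displaymath}
  W_0 \set \descr{(y_1,y_2)}{S(y_1-y_2,y_1-y_2) > 0}, \quad W_n \set \descr{(y_1,y_2)}{\abs{y_1-y_2} > 1/n},
\end{displaymath}
this gives $\Sigma_1(P_G) = T^{-1}(W_0) \in \sigma(P_G)$ and $\Sigma(P_G) = \bigcup_{n\geq 1} T^{-1}(W_n) \in \sigma(P_G)$. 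Since $G$ is $S$-monotone, $S(y_1-y_2,y_1-y_2)\geq 0$ on $P_G(x)\times P_G(x)$, so $\Sigma(P_G) = \Sigma_0(P_G) \cup \Sigma_1(P_G)$ with the two pieces disjoint, and therefore $\Sigma_0(P_G) = \Sigma(P_G) \setminus \Sigma_1(P_G) \in \sigma(P_G)$.

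The delicate point, which I expect to be the real obstacle, is exactly this observation that $\sigma(P_G)$, generated \emph{a priori} only by pre-images in $\real{d}$, nevertheless captures pre-images under the product $T$ of opens in the higher-dimensional space $\real{2d}$; once the rectangle reduction is in place, the argument is a routine combination of Lemmas~\ref{lem:1} and~\ref{lem:4}.
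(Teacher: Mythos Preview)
Your proof is correct and follows essentially the same approach as the paper: parts~\ref{item:10} and~\ref{item:11} are identical, and for part~\ref{item:12} the paper carries out your rectangle reduction explicitly with a countable base of open balls, writing $\Sigma(P_G)$ and $\Sigma_1(P_G)$ as countable unions of sets $P_G^{-1}(B_1^{\alpha})\cap P_G^{-1}(B_2^{\alpha})$ over suitable pairs of balls (disjoint for $\Sigma$, and with $\inf_{u\in B_1^{\alpha},\,v\in B_2^{\alpha}} S(u-v,u-v)>0$ for $\Sigma_1$). Your product-multifunction framing via $T(y)=P_G(y)\times P_G(y)$ is a clean repackaging of exactly this idea.
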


\begin{proof}
  \ref{item:10}: By Lemma~\ref{lem:1}, the graph of $P_G$ is closed.
  Since any open set $U$ is an $F_\sigma$-set, Lemma~\ref{lem:4} shows
  that $P_G^{-1}(U)$ is an $F_\sigma$-set. In particular, it is a
  Borel set. The result now holds by the definition of $\sigma(P_{G})$
  .
  
  \ref{item:11}: Fix $s\in \real{d}$. For every $a\in \real{}$, we
  have
  \begin{displaymath}
    \descr{y \in \real{d}}{g_s(y)>a}=
    P^{-1}_G \cbraces{\descr{x\in \real{d}}{\ip{s}{x}>a}}
    \in \sigma(P_G).
  \end{displaymath}
  The $\sigma(P_G)$-measurability of $g_s$ readily follows.

  \ref{item:12}: Fix a sequence $x_i$, $i=1,2,\dots$ dense in
  $\real{d}$.  Denote by $r_k$, $k=1,2,\dots$ an enumeration of all
  positive rationals and set $\alpha\set (i, j,k,l)$ and
  \begin{displaymath}
    B^{\alpha}_1\set\descr{x\in \real{d}}{|x-x_i|<r_k}, \quad
    B^{\alpha}_2\set\descr{x\in \real{d}}{|x-x_j|<r_l}.
  \end{displaymath}
  Restrict the set of countable indexes $\alpha= (i, j,k,l)$ to those
  for which
  \begin{displaymath}
    B^{\alpha}_1\cap B^{\alpha}_2=\emptyset.
  \end{displaymath}
  We further denote by $\beta $ the indexes $\alpha$ with the
  additional property:
  \begin{displaymath}
    \inf _{u\in B^{\beta}_1, v\in B^{\beta}_2}S(u-v,u-v)>0.
  \end{displaymath}
  The conclusion follows from the definition of $\sigma (P_G)$, once
  we observe that
  \begin{align*}
    \Sigma(P_G) & =\bigcup _{\alpha} \cbraces{P^{-1}_G(B^{\alpha}_1)\cap 
                  P^{-1}_G(B^{\alpha}_2)},\\
    \Sigma_1(P_G) &=\bigcup _{\beta} \cbraces{P^{-1}_G(B^{\beta}_1)\cap 
                    P^{-1}_G(B^{\beta}_2)}, 
  \end{align*}
  and $\Sigma_0(P_G) = \Sigma(P_G) \setminus \Sigma_1(P_G)$.
\end{proof}

\begin{proof}[Proof of Theorem~\ref{th:3}]
  Let $\gamma$ be an optimal plan for~\eqref{eq:2}. By extending, if
  necessary, the probability space, we can find $Z\in \lsp{2}{d}$ such
  that $\gamma = \law{Z, Y}$. As $x = \gamma(y|x)$, we have
  $Z = \cEP{Y}{Z}$. By Theorem~\ref{th:2}\ref{item:6},
  \begin{align*}
    Z \in P_G(Z) \subset P_G(Y). 
  \end{align*}
  It goes without saying that all pointwise relations are understood
  in the almost sure sense.  Since $Y\not\in \Sigma_1(P_G)$, we have
  \begin{displaymath}
    S(x-y,x-y) = 0, \quad x,y\in P_G(Y), 
  \end{displaymath}
  which readily implies that $P_G(Y) \subset P_G(Z)$. We have
  proved~\eqref{eq:6}.
  
  As $P_G(Z) = P_G(Y)$, the $\sigma$-algebras generated by $P_G(Y)$
  and $P_G(Z)$ differ only by $\mathbb{P}$-null sets.  Therefore, the
  conditioning on either of them yields the same result. In addition,
  $\sigma(P_G(Z))\subset \sigma(Z)$.  By the tower property of
  conditional expectations,
  \begin{align*}
    X  & \set   \cEP{Y}{P_G(Y)} = \cEP{Y}{P_G(Z)} = \cEP{\cEP{Y}{Z}}{P_G(Z)} \\
       &\; =  \cEP{Z}{P_G(Z)} = \cEP{Z}{P_G(Y)}. 
  \end{align*}
  
  Since $\sigma(P_G(Y)) \subset \sigma(Y)$, we have
  $X\in \mathcal{X}(Y)$.  By Theorem~\ref{th:2}, the law of $(X,Y)$ is
  an optimal plan if and only if $X \in P_G(Y)$.  From
  Lemma~\ref{lem:3} and the assumption that $Y\not\in \Sigma_1(P_G)$,
  we deduce that $P_G(Y)$ takes values in closed convex sets. Hence,
  \begin{displaymath}
    X \in P_G(Y) \iff \ip{s}{X} \leq g_s(Y) \set \sup_{z\in P_G(Y)}
    \ip{s}{z}, \quad s\in \real{d},   
  \end{displaymath}
  where in showing the implication $\impliedby$, we choose an
  exceptional null set that works for a dense countable subset of
  $s\in \real{d}$.
  
  Let $s\in \real{d}$. By Lemma~\ref{lem:5}, $g_s$ is
  $\sigma(P_G)$-measurable and so, $g_s(Y)$ is
  $\sigma(P_G(Y))$-measurable. As $Z\in P_G(Y)$, we have
  $\ip{s}{Z} \leq g_s(Y)$. It follows that
  \begin{displaymath}
    \ip{s}{X} = \cEP{\ip{s}{Z}}{P_G(Y)} \leq  \cEP{g_s(Y)}{P_G(Y)} =
    g_s(Y). 
  \end{displaymath}
  Hence, $X\in P_G(Y)$, as required.

  To conclude the proof we only have to verify~\eqref{eq:7}.  The
  optimal map $X$ is $Z$-measurable, because it is $P_G(Y)$-measurable
  and $P_G(Y) = P_G(Z)$. By the tower property,
  \begin{displaymath}
    \cEP{Z}{X} = \cEP{\cEP{Y}{Z}}{X} = \cEP{Y}{X} = X. 
  \end{displaymath}
  Since $X$ and $Z$ take values in the $S$-monotone set $G$, we have
  \begin{displaymath}
    S(Z-X,Z-X) \geq 0. 
  \end{displaymath}
  Using the tower property and the optimality of $X$ and $Z$, we
  obtain
  \begin{align*}
    \EP{S(Z-X,Z-X)} & =  \EP{S(Z-X,Z)} = \EP{S(Z-X,Y)} \\
                    & =  \EP{S(Z,Y)}  -
                      \EP{S(X,Y)}  = 0.
  \end{align*}
  It follows that $S(Z-X,Z-X) = 0$.
\end{proof}

Following~\cite{KramSirb:22b}, we now state sufficient conditions for
the assertions of Theorem~\ref{th:3} to hold that do not involve a
dual minimizer.  Let $j\in \braces{1,\dots,d}$ and $C$ be a compact
set in $\real{d}$ such that $y^j=1$, $y\in C$.  For $x \in \real{d}$,
we denote by $x^{-j}$ its sub-vector without the $j$th coordinate:
\begin{displaymath}
  x^{-j} \set 
  \cbraces{x^1,\dots,x^{j-1},x^{j+1},\dots,x^{d}}
  \in \real{d-1}. 
\end{displaymath}
We write $\mathcal{H}^{j}_{C}$ for the family of functions $h=h(x)$ on
$\real{d}$ having the decomposition:
\begin{displaymath}
  h(x) = x^j + g_1(x^{-j}) - g_2(x^{-j}), \quad
  x\in \real{d},    
\end{displaymath}
where the functions $g_1$ and $g_2$ on $\real{d-1}$ are convex, have
linear growth:
\begin{displaymath}
  \abs{g_i(x)} \leq K(1+\abs{x}), \quad x\in \real{d}, \; i=1,2, 
\end{displaymath}
for some constant $K>0$, and $\nabla h(x) \in C$, whenever the
functions $g_1$ and $g_2$ are differentiable at $x^{-j}$.

Let $h\in \mathcal{H}^j_C$ and $H$ be the zero-level set of the
composition function $h\circ S$, that is,
$H \set \descr{x\in \real{d}}{h(Sx) = 0}$.  Lemma~4.4
in~\cite{KramSirb:22b} shows that the surface $H$ has at every point
an $S$-normal vector in $C$.  In the spirit of \citet[Definition~6.3,
p.~199]{RockWets:98}, a vector $w\in \real{d}$ is called
$S$-\emph{regular normal to $H$ at $x\in H$} if
\begin{displaymath}
  \limsup_{\substack{H\ni y\to x \\ y\not=x}} \frac{S(w,y-x)}{\abs{y-x}}\leq 0.  
\end{displaymath}
A vector $w\in \real{d}$ is called $S$-\emph{normal to $H$ at $x$} if
there exist $x_n\in H$ and an $S$-regular normal vector $w_n$ to $H$
at $x_n$, such that $x_n\rightarrow x$ and $w_n\rightarrow w$.

\begin{Theorem}
  \label{th:4}
  Let $S\in \msym{d}{m}$, $Y \in \mathcal{L}_2(\real{d})$, and denote
  $\nu\set \law{Y}$.  Assume that the convex hull of the support of
  $\nu$ has a non-empty interior:
  \begin{equation}
    \label{eq:8}
    \interior{\conv{\supp{\nu}}}\not=\emptyset, 
  \end{equation}
  and that
  \begin{enumerate}[label = {\rm (A\arabic{*})}, ref={\rm
      (A\arabic{*})}]
  \item \label{item:13} $h(SY) \not =0$ for every function
    $h\in \mathcal{H}^j_C$ with index $j\in \braces{1,\dots,d}$ and
    compact set
    \begin{equation}
      \label{eq:9}
      C \subset\descr{x\in \real{d}}{x^j=1, \; S(x,x) >0}. 
    \end{equation}
  \end{enumerate}
  Then $Y\not \in \Sigma_1(P_G)$ for every $G \in \mset{S}$ such that
  $\EP{\psi_G(Y)}<\infty$, and the assertions of Theorem~\ref{th:3}
  hold.
\end{Theorem}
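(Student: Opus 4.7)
The plan is to reduce Theorem~\ref{th:4} to Theorem~\ref{th:3} by showing that assumption~\ref{item:13} forces $\nu(\Sigma_1(P_G))=0$ for every $G\in \mset{S}$ with $\EP{\psi_G(Y)}<\infty$. Once this is established, $Y\not\in \Sigma_1(P_G)$ almost surely for every such $G$, and the assertions of Theorem~\ref{th:3} apply verbatim, delivering the optimal map $X=\cEP{Y}{P_G(Y)}$ together with the characterizations~\eqref{eq:6} and~\eqref{eq:7} for any other optimal plan.

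To get $\nu(\Sigma_1(P_G))=0$, I would produce a countable cover of $\Sigma_1(P_G)$ by zero-level sets of compositions $h\circ S$ with $h\in \mathcal{H}^j_C$ and $C$ satisfying~\eqref{eq:9}. For each $y\in \Sigma_1(P_G)$ there exist $y_1\neq y_2\in P_G(y)$ with $S(y_1-y_2,y_1-y_2)>0$; the regular $S$-normals $y-y_1$ and $y-y_2$ to $G$ at $y_1$ and $y_2$, and the separating vector $y_1-y_2$ itself, can be normalized in the coordinate of largest absolute value and then confined to a compact set. This produces a countable stratification $\Sigma_1(P_G)=\bigcup_n\Sigma_{1,n}$ where, on $\Sigma_{1,n}$, the relevant normalized directions lie in a fixed compact $C_n\subset\{x\in \real{d}:x^{j_n}=1,\ S(x,x)>0\}$ for some coordinate $j_n$.

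The structural heart of the argument, which I would take from~\cite{KramSirb:22b} (and whose two-dimensional prototype appears in Example~\ref{ex:2}), is that each stratum $\Sigma_{1,n}$ is contained in a $c-c$ surface $F_n := \{y\in \real{d}: h_n(Sy)=0\}$ for some $h_n\in \mathcal{H}^{j_n}_{C_n}$. The geometric picture is that such $y$ sits on a hypersurface along which two local branches of $G$ compete equally for the $S$-projection; these branches are described by convex Fitzpatrick-type potentials, and the bisecting locus is the zero set of their difference, a $c-c$ function whose gradient in $Sy$-coordinates agrees with the normalized $S$-normal direction, placing it in $C_n$. The main obstacle in writing this out rigorously is verifying the uniform linear-growth bound demanded by the definition of $\mathcal{H}^{j_n}_{C_n}$; this is where the integrability $\EP{\psi_G(Y)}<\infty$ and the interior assumption~\eqref{eq:8}, which forces $\nu$ to probe a genuinely $d$-dimensional region, must be combined to tame the two convex potentials globally.

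Given the covering, \ref{item:13} yields $\nu(F_n)=0$ for every $n$, hence $\nu(\Sigma_1(P_G))\leq \sum_n\nu(F_n)=0$. Thus $Y\not\in \Sigma_1(P_G)$ almost surely, and Theorem~\ref{th:3} completes the proof.
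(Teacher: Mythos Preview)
Your proposal is correct and follows essentially the same route as the paper: reduce to Theorem~\ref{th:3} by showing $\nu(\Sigma_1(P_G))=0$ via a countable cover of $\Sigma_1(P_G)$ by zero-level sets $\{h_n(S\cdot)=0\}$ with $h_n\in\mathcal{H}^{j_n}_{C_n}$ and $C_n$ satisfying~\eqref{eq:9}, then apply~\ref{item:13}. The paper simply cites Theorem~4.6 of~\cite{KramSirb:22b} for the covering (after noting that~\eqref{eq:8} gives $\interior{\dom\psi_G}\neq\emptyset$, the hypothesis that result needs), whereas you sketch the geometry behind that external theorem; but the logical skeleton is identical.
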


\begin{proof}
  Let $G\in \mset{S}$ be such that
  $\EP{\psi_G(Y)}<\infty$. Condition~\eqref{eq:8} implies that the
  domain of $\psi_G$ has a non-empty interior.  Theorem~4.6
  in~\cite{KramSirb:22b} yields functions
  $h_n \in \mathcal{H}_{C_n}^{j_n}$ with indexes
  $j_n\in \braces{1,\dots,d}$ and compact sets
  \begin{displaymath}
    C_n \subset\descr{x\in \real{d}}{x^{j_n}=1, \; S(x,x) >0}, \quad
    n\geq 1, 
  \end{displaymath}
  such that
  \begin{displaymath}
    \Sigma_1(P_G) \subset \bigcup_n \descr{x\in \real{d}}{h_n(Sx) = 0}. 
  \end{displaymath}
  The result readily follows.
\end{proof}

Condition~\eqref{eq:9} means that the zero-level set $H$ of the
composition function $h\circ S$ has at every point an $S$-normal
vector $w$ with $w^j=1$ which is strictly $S$-positive.  For $d=2$ and
the standard $S$ from Example \ref{ex:1}, the latter property is
equivalent to $H$ having at every point a normal vector, in the
classical Euclidean sense, that lies in the strictly positive
orthant. In this case, $H$ is the graph of a strictly decreasing $c-c$
function and Theorem~\ref{th:4} improves Theorem~4.5
in~\cite{KramXu:22}, where $\nu(F)=0$ for every graph $F$ of a
strictly decreasing Lipschitz function.

\section{Uniqueness of optimal maps and plans}
\label{sec:uniq-solut-map}

Theorem~\ref{th:6} states explicit conditions for the uniqueness of
optimal maps and plans.  We start with an intermediate result.

\begin{Theorem}
  \label{th:5}
  Let $S\in \msym{d}{m}$, $Y \in \mathcal{L}_2(\real{d})$, and denote
  $\nu \set \law{Y}$. Let $G\in \mset{S}$ be an optimal set
  for~\eqref{eq:3} and assume that $Y \not\in \Sigma(P_G)$, that is,
  the projection $P_G(Y)$ is (almost surely) single-valued. Then
  \begin{displaymath}
    X \set P_G(Y) \ind{Y\not\in \Sigma(P_G)}
  \end{displaymath}
  is the unique optimal map for~\eqref{eq:1} and the law of $(X,Y)$ is
  the unique optimal plan for~\eqref{eq:2}.
\end{Theorem}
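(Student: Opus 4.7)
The plan is to deduce Theorem~\ref{th:5} directly from Theorem~\ref{th:3}. Since $\Sigma_1(P_G) \subset \Sigma(P_G)$, the hypothesis $Y \not\in \Sigma(P_G)$ implies $Y \not\in \Sigma_1(P_G)$, so Theorem~\ref{th:3} applies to the pair $(Y,G)$ and provides both an existence statement and the structural relations~\eqref{eq:6}--\eqref{eq:7} for every optimal plan.

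First I would verify that $X \set P_G(Y) \ind{Y \not\in \Sigma(P_G)}$ is a well-defined $Y$-measurable random variable. Lemma~\ref{lem:5}\ref{item:12} shows that $\Sigma(P_G)$ belongs to $\sigma(P_G)$, so by Lemma~\ref{lem:5}\ref{item:10} it is a Borel subset of $\real{d}$ and the indicator is meaningful. On $\real{d}\setminus \Sigma(P_G)$ the projection $P_G$ is single-valued, and the Borel selector $f$ of $P_G$ on $\dom{P_G}$ supplied by Lemma~\ref{lem:4}\ref{item:9} then agrees with that unique value there, so $X = f(Y) \ind{Y \not\in \Sigma(P_G)}$ is a Borel function of $Y$. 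Next, the optimal map $\widetilde X \set \cEP{Y}{\sigma(P_G(Y))}$ produced by Theorem~\ref{th:3} satisfies $\widetilde X \in P_G(Y)$ almost surely; since $P_G(Y)$ is almost surely the singleton $\{X\}$, we conclude $\widetilde X = X$ almost surely. Theorem~\ref{th:3} then asserts that $X$ is an optimal map for~\eqref{eq:1} and that $\law{X,Y}$ is an optimal plan for~\eqref{eq:2}.

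For uniqueness, let $\gamma$ be any optimal plan for~\eqref{eq:2}. Enlarging the underlying probability space if necessary, write $\gamma = \law{Z,Y}$ for some $Z \in \lsp{2}{d}$ with $\cEP{Y}{Z} = Z$. Relation~\eqref{eq:6} in Theorem~\ref{th:3} yields $Z \in P_G(Y)$ almost surely, and the single-valuedness of $P_G(Y)$ forces $Z = X$ almost surely. Hence $\gamma = \law{Z,Y} = \law{X,Y}$, proving uniqueness of the optimal plan. Uniqueness of the optimal map is the special case $Z \in \mathcal{X}(Y)$.

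The argument is essentially Theorem~\ref{th:3} specialized to the single-valued regime, so the only step requiring genuine care is the measurability bookkeeping in the first paragraph; once $X$ has been identified with a Borel function of $Y$, both existence and uniqueness fall out of relations~\eqref{eq:6}--\eqref{eq:7}. I do not anticipate any substantial obstacle beyond this.
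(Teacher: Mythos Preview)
Your proof is correct and rests on the same idea as the paper's: any optimal plan $\gamma=\law{Z,Y}$ must satisfy $Z\in P_G(Y)$, and the assumed single-valuedness of $P_G(Y)$ forces $Z=X$. The only difference is that the paper reaches $Z\in P_G(Y)$ directly from Theorem~\ref{th:2} rather than by routing through Theorem~\ref{th:3} and relation~\eqref{eq:6}; this makes the paper's argument marginally shorter (and avoids the intermediate identification of $X$ with $\cEP{Y}{\sigma(P_G(Y))}$), but the two routes are logically equivalent since~\eqref{eq:6} is itself derived from Theorem~\ref{th:2}.
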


\begin{proof}
  Lemma~\ref{lem:5} shows that $\Sigma(P_G)$ is
  $\sigma(P_G)$-measurable. Hence, $X$ is $\sigma(P_G(Y))$-measurable
  and then is also $Y$-measurable.  Let $\gamma$ be an optimal plan
  for~\eqref{eq:2}.  By extending, if necessary, the probability
  space, we can find a $d$-dimensional random variable $Z$ such that
  $\gamma = \law{Z, Y}$. As $x = \gamma(y|x)$, we have
  $Z = \cEP{Y}{Z}$. Theorem~\ref{th:2} shows that $Z\in P_G(Y)$. Since
  $P_G$ is single-valued, $Z = X$.
\end{proof}

Following~\cite{KramSirb:22b}, we denote by $\mathcal{E}_0(S)$ the
collection of Borel sets $D\subset \real{d}$ with the property that
for any $\delta >0$, there are functions
$h_n\in \mathcal{H}^{j_n}_{C_n}$ with indexes
$j_n\in \braces{1,\dots,d}$ and compact sets
\begin{displaymath}
  C_n \subset\descr{x\in \real{d}}{x^{j_n}=1, \; 0\leq S(x,x)\leq
    \delta}, \quad n\geq 1, 
\end{displaymath}
such that
\begin{displaymath}
  D\subset \bigcup_n\descr{x\in \real{d}}{h_n(Sx)=0}.
\end{displaymath}
In particular, the family $\mathcal{E}_0(S)$ contains all hyperplanes
\begin{displaymath}
  H(w,c) \set \descr{y\in \real{d}}{S(y,w) = c} = \descr{y\in
    \real{d}}{\ip{Sy}{w} -c = 0}, 
\end{displaymath}
where $c\in \real{}$ and the unique $S$-normal vector $w$ to $H(w,c)$
is $S$-\emph{isotropic} in the sense that $\abs{w}>0$ and
$S(w,w) = 0$. Thus, in Theorem~\ref{th:6}, condition~\ref{item:14} is
weaker than~\ref{item:15} stated for $m=1$.

Geometrically, for any $\delta >0$, the elements of $\mathcal{E}_0(S)$
are covered by a countable number of $c-c$ surfaces, that have at
every point $S$-normal vectors $w$ with $w^j=1$ and
$0\leq S(w,w)\leq \delta$.  Heuristically, for small $\delta>0$, these
$S$-normal vectors are \emph{almost} $S$-isotropic.

\begin{Theorem}
  \label{th:6}
  Let $S\in \msym{d}{m}$, $Y \in \mathcal{L}_2(\real{d})$, and denote
  $\nu\set \law{Y}$. Assume~\eqref{eq:8} and that
  \begin{enumerate}[label = {\rm (A\arabic{*})}, ref={\rm
      (A\arabic{*})}]
    \setcounter{enumi}{1}
  \item \label{item:14} If $m=1$, then for every $S$-isotropic vector
    $w$, the random variable $S(Y,w)$ has a continuous cumulative
    distribution function.
  \item \label{item:15} If $m>1$, then $Y\notin D$, that is,
    $\nu(D)=0$, for every $D\in \mathcal{E}_0(S)$.
  \end{enumerate}
  Then $Y\not \in \Sigma_0(P_G)$ for every $G \in \mset{S}$ such that
  $\EP{\psi _G(Y)}<\infty$.

  If, in addition, \ref{item:13} holds, then $Y\not \in \Sigma(P_G)$
  for every $G \in \mset{S}$ such that $\EP{\psi _G(Y)}<\infty$ and
  the assertions of Theorem~\ref{th:5} hold.
\end{Theorem}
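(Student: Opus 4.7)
The plan is to reduce the statement to Theorem~\ref{th:5} by verifying that $\nu(\Sigma(P_G))=0$ for every optimal $G\in\mset{S}$ with $\EP{\psi_G(Y)}<\infty$. Recalling the decomposition $\Sigma(P_G)=\Sigma_0(P_G)\cup\Sigma_1(P_G)$, the second half of the statement is immediate once the first is in hand: under \ref{item:13}, Theorem~\ref{th:4} already delivers $\nu(\Sigma_1(P_G))=0$, and combining this with $\nu(\Sigma_0(P_G))=0$ yields $Y\notin\Sigma(P_G)$, so Theorem~\ref{th:5} applies and gives the required uniqueness of both the optimal map and the optimal plan. Hence the real work is to prove the first claim: \ref{item:14} (when $m=1$) or \ref{item:15} (when $m>1$), together with~\eqref{eq:8}, forces $\nu(\Sigma_0(P_G))=0$.

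The starting point would be the covering result for $\Sigma_0(P_G)$ from~\cite{KramSirb:22b}, parallel to the covering of $\Sigma_1(P_G)$ invoked in the proof of Theorem~\ref{th:4}. Condition~\eqref{eq:8} guarantees that $\dom\psi_G$ has non-empty interior, which is exactly the hypothesis needed to trigger the $c-c$ covering machinery of~\cite{KramSirb:22b}. Applied to $\Sigma_0$, that machinery should produce, for every $\delta>0$, functions $h_n\in\mathcal{H}^{j_n}_{C_n}$ with compact sets $C_n\subset\{x\in\real{d}\,:\,x^{j_n}=1,\;0\le S(x,x)\le\delta\}$, such that
\begin{displaymath}
  \Sigma_0(P_G)\subset\bigcup_n\descr{x\in\real{d}}{h_n(Sx)=0}.
\end{displaymath}
This is exactly the membership condition $\Sigma_0(P_G)\in\mathcal{E}_0(S)$, so for $m>1$, assumption~\ref{item:15} gives $\nu(\Sigma_0(P_G))=0$ directly.

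For the case $m=1$, more is needed because~\ref{item:14} is weaker than~\ref{item:15}. Here I would use the signature $(1,d-1)$ structure: the set $\{x\,:\,x^j=1,\; 0\le S(x,x)\le\delta\}$ shrinks, as $\delta\downarrow 0$, to the intersection of the isotropic cone of $S$ with the affine hyperplane $x^j=1$, a set of codimension one in that hyperplane. Passing $\delta\to 0$ along a countable sequence, together with the observation that a $c-c$ function whose gradient is forced to take values in an arbitrarily small neighborhood of a single direction must in fact be affine, should yield a covering of $\Sigma_0(P_G)$ by a countable family of affine hyperplanes of the form $H(w_n,c_n)=\descr{y}{S(y,w_n)=c_n}$ with isotropic normals $w_n$. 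For each such hyperplane, $\PP{Y\in H(w_n,c_n)}=\PP{S(Y,w_n)=c_n}=0$ by the continuity of the distribution function of $S(Y,w_n)$ guaranteed in~\ref{item:14}. A countable union of null sets is null, so again $\nu(\Sigma_0(P_G))=0$.

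The main obstacle is the covering step for $\Sigma_0$, and specifically the limiting argument in the $m=1$ case that collapses the $c-c$ covering (valid for each positive $\delta$) down to a covering by genuinely isotropic hyperplanes. This is where the special Lorentzian structure of $\msym{d}{1}$ must be used: in higher index $m>1$, the isotropic cone is thick enough that only the quantitative $\mathcal{E}_0(S)$-condition~\ref{item:15} suffices, while for $m=1$ the isotropic cone is thin and one can trade it for the one-dimensional continuity condition on the scalar $S(Y,w)$. All other steps—citing Theorem~\ref{th:4} for $\Sigma_1$, invoking~\eqref{eq:8} to secure the covering hypothesis of~\cite{KramSirb:22b}, and finishing via Theorem~\ref{th:5}—are routine.
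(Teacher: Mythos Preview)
Your overall architecture matches the paper's: reduce to Theorem~\ref{th:5}, split $\Sigma(P_G)=\Sigma_0(P_G)\cup\Sigma_1(P_G)$, dispose of $\Sigma_1$ via Theorem~\ref{th:4} under~\ref{item:13}, and show $\nu(\Sigma_0(P_G))=0$ via the $c$--$c$ covering results of~\cite{KramSirb:22b} (with~\eqref{eq:8} ensuring $\interior\dom\psi_G\neq\emptyset$). For $m>1$ your sketch is essentially the paper's argument: the covering of $\Sigma_0(P_G)$ for every $\delta>0$ places $\Sigma_0(P_G)$ in $\mathcal{E}_0(S)$, and~\ref{item:15} finishes. (The paper routes this through the auxiliary sets $\overline{\Sigma}^j_0(P_G)$ because Theorem~4.7 of~\cite{KramSirb:22b} is stated one coordinate $j$ at a time, but that is a bookkeeping detail.)

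The genuine gap is your treatment of $m=1$. You propose to obtain the isotropic-hyperplane covering by letting $\delta\downarrow 0$ in the general $c$--$c$ covering and invoking that ``a $c$--$c$ function whose gradient is forced to take values in an arbitrarily small neighborhood of a single direction must be affine.'' But the intersection of the isotropic cone with $\{x^j=1\}$ is \emph{not} a single direction for $d>2$; in signature $(1,d-1)$ it is a $(d-2)$-dimensional quadric. So the gradients of the covering surfaces are constrained to a neighborhood of a genuinely positive-dimensional set, and nothing forces them to be affine. Moreover, the covering families depend on $\delta$, and you have no mechanism (compactness, diagonal extraction, etc.) to pass to a limit covering. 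The paper does not attempt any such limit: it invokes a \emph{separate} result, Theorem~4.8 of~\cite{KramSirb:22b}, which is specific to $m=1$ and directly yields isotropic vectors $w_n$ and constants $c_n$ with
\begin{displaymath}
  \Sigma_0(P_G)\subset\bigcup_n\descr{x\in\real{d}}{S(x,w_n)=c_n}.
\end{displaymath}
Then~\ref{item:14} gives $\PP{S(Y,w_n)=c_n}=0$ for each $n$, and the proof concludes. So for $m=1$ you should cite Theorem~4.8 of~\cite{KramSirb:22b} directly rather than trying to squeeze hyperplanes out of Theorem~4.7 by a limit; the latter route does not work as written.
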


\begin{proof}
  Let $G\in \mset{S}$ be such that $\EP{\psi_G(Y)}<\infty$.  In view
  of Theorem~\ref{th:4}, we only need to show that
  $Y\not\in \Sigma_0(P_G)$.  From~\eqref{eq:8} we deduce that the
  domain of $\psi_G$ has a non-empty interior, which enables us to use
  Theorems~4.8 and~4.7 in~\cite{KramSirb:22b}.

  Let $m=1$.  Theorem~4.8 in~\cite{KramSirb:22b} yields $S$-isotropic
  $w_n\in \real{d}$ and constants $c_n\in \real{}$, $n\geq 1$, such
  that
  \begin{displaymath}
    \Sigma _0(P_G)\subset \bigcup _n\descr{x\in \real{d}}{S(x,w_n)=c_n}.
  \end{displaymath}
  Under~\ref{item:14}, $Y\not\in \Sigma_0(P_G)$.

  Let $m>1$. For $j\in \braces{1,\dots, d}$, we define
  $\overline{\Sigma}^j_0(P_G)$ as the set of $x\in \dom{P_G}$ such
  that $S(y_1-y_2,y_1-y_2)=0$ for \emph{some} $y_1,y_2 \in P_G(x)$
  with $y_1^j \not = y_2^j$. Clearly,
  \begin{displaymath}
    \Sigma_0(P_G) \subset \bigcup_{j=1}^d
    \overline{\Sigma}^j_0(P_G).
  \end{displaymath}
  Fix $\delta >0$ and $j\in \braces{1, \dots, d}$.  By Theorem~4.7
  in~\cite{KramSirb:22b}, there exist compact sets
  \begin{displaymath}
    C_n \subset\descr{x\in \real{d}}{x^j=1, \;  0\leq S(x,x) \leq \delta}
  \end{displaymath}
  and functions $h_n\in \mathcal{H}^j_{C_n}$, $n\geq 1$, such that
  \begin{displaymath}
    \overline{\Sigma}^j_0(P_G) \subset \bigcup_n \descr{x\in
      \real{d}}{h_n(Sx) = 0}.  
  \end{displaymath}
  By Lemma \ref{lem:5}, $\Sigma _0(P_G)$ is Borel measurable. Hence,
  $\Sigma _0(P_G)\in \mathcal{E}_0(S)$ and then, under~\ref{item:15},
  $Y\not\in \Sigma_0(P_G)$.
\end{proof}

For $d=2$ and the standard $S$ from Example~\ref{ex:1},
Theorem~\ref{th:6} improves Theorem~4.6 in~\cite{KramXu:22}. In this
case, \ref{item:14} means that each of the components $Y_i$, $i=1,2$,
has a continuous cumulative distribution function.

Given~\eqref{eq:8}, Theorem \ref{th:6} yields the uniqueness of
optimal maps and plans provided that $\nu \set \law{Y}$ does not
charge any $c-c$ surface whose normals in the $S$-space are either
strictly positive or
\begin{enumerate}[label = {\rm (\alph{*})}, ref={\rm (\alph{*})}]
  \setcounter{enumi}{0}
\item isotropic if the index $m=1$;
\item nonnegative and almost isotropic if $m>1$.
\end{enumerate}
In particular, the maps and plans are unique, if $\nu$ does not charge
any $c-c$ surface having, at every point, a nonnegative normal in the
$S$-space. Theorem~\ref{th:7} contains a formal statement.

\begin{Theorem}
  \label{th:7}
  Let $S\in \msym{d}{m}$, $Y \in \mathcal{L}_2(\real{d})$, and denote
  $\nu\set \law{Y}$. Assume~\eqref{eq:8} and that
  \begin{enumerate}[label = {\rm (A\arabic{*})}, ref={\rm
      (A\arabic{*})}]
    \setcounter{enumi}{3}
  \item\label{item:16} $h(SY) \not =0$ for every
    $h\in \mathcal{H}^j_C$ with index $j\in \braces{1,\dots,d}$ and
    compact set
    \begin{displaymath}
      C \subset\descr{x\in \real{d}}{x^j=1, \; S(x,x) \geq 0}.
    \end{displaymath}
  \end{enumerate}
  Then $Y\not \in \Sigma(P_G)$ for every $G \in \mset{S}$ such that
  $\EP{\psi _G(Y)}<\infty$ and the assertions of Theorem~\ref{th:5}
  hold.
\end{Theorem}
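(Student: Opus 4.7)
The plan is to show that assumption~\ref{item:16} is strictly stronger than the combination of~\ref{item:13} with~\ref{item:14} (when $m=1$) or~\ref{item:15} (when $m>1$), so that Theorem~\ref{th:6} applies directly and delivers both $Y\notin\Sigma(P_G)$ and the conclusions of Theorem~\ref{th:5}.

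The implication~\ref{item:16}$\Rightarrow$\ref{item:13} is immediate, because every compact $C\subset \descr{x\in\real{d}}{x^j=1,\,S(x,x)>0}$ also sits inside $\descr{x\in\real{d}}{x^j=1,\,S(x,x)\geq 0}$, so every $h$ to which~\ref{item:13} applies is subject to~\ref{item:16} as well. For $m>1$, \ref{item:16}$\Rightarrow$\ref{item:15} runs as follows: given $D\in\mathcal{E}_0(S)$, pick any $\delta>0$; the definition of $\mathcal{E}_0(S)$ furnishes functions $h_n\in\mathcal{H}^{j_n}_{C_n}$ with $C_n\subset\descr{x\in\real{d}}{x^{j_n}=1,\,0\leq S(x,x)\leq\delta}$ whose composition zero-level sets cover $D$. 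Since each $C_n$ also lies inside $\descr{x\in\real{d}}{x^{j_n}=1,\,S(x,x)\geq 0}$, \ref{item:16} gives $h_n(SY)\neq 0$ almost surely for every $n$, and hence $\PP{Y\in D}=0$.

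For $m=1$, the implication~\ref{item:16}$\Rightarrow$\ref{item:14} reduces to a linear construction. Let $w$ be $S$-isotropic, fix $c\in\real{}$, and choose an index $j$ with $w^j\neq 0$, which exists because $\abs{w}>0$. Set $\widetilde w\set w/w^j$ and
\begin{displaymath}
  h(z)\set \ip{z}{\widetilde w}-c/w^j = z^j + \sum_{i\neq j}\widetilde w^i z^i - c/w^j, \quad z\in\real{d}.
\end{displaymath}
Taking $g_1$ to be the affine function $\sum_{i\neq j}\widetilde w^i z^i - c/w^j$ and $g_2=0$ shows that $h\in\mathcal{H}^j_{\braces{\widetilde w}}$, while the identity $S(\widetilde w,\widetilde w)=S(w,w)/(w^j)^2=0$ places the singleton $\braces{\widetilde w}$ inside $\descr{x\in\real{d}}{x^j=1,\,S(x,x)\geq 0}$. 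A short computation yields $h(SY)=(S(Y,w)-c)/w^j$, so~\ref{item:16} forces $\PP{S(Y,w)=c}=0$. As $c$ is arbitrary, $S(Y,w)$ has a continuous cumulative distribution function, which is exactly~\ref{item:14}.

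All hypotheses of Theorem~\ref{th:6} being thus verified, $Y\not\in\Sigma(P_G)$ for every $G\in\mset{S}$ with $\EP{\psi_G(Y)}<\infty$ and the conclusions of Theorem~\ref{th:5} follow. I do not anticipate any real obstacle: the entire argument is a bookkeeping consolidation showing that the two separate regularity conditions of Theorems~\ref{th:4} and~\ref{th:6} are uniformly captured by the single condition~\ref{item:16}.
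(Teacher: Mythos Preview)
Your proposal is correct and follows exactly the paper's approach: the paper's proof consists of the single sentence that Theorem~\ref{th:7} is an immediate corollary of Theorem~\ref{th:6} because~\ref{item:16} implies~\ref{item:13}, \ref{item:14}, and~\ref{item:15}. You have simply spelled out these three implications in detail, and each of your verifications is sound.
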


\begin{proof}
  The result is an immediate corollary of Theorem~\ref{th:6},
  because~\ref{item:16} implies~\ref{item:13}, \ref{item:14},
  and~\ref{item:15}.
\end{proof}

We point out that~\ref{item:16} and then also~\ref{item:13},
\ref{item:14}, and~\ref{item:15} hold if $\nu$ has a density with
respect to the Lebesgue measure on $\real{d}$.

\section{Equilibrium with insider}
\label{sec:equil-with-insid}

In this section, we study a multi-asset version of equilibrium with
insider from~\cite{RochVila:94}. Our results generalize those
from~\cite{KramXu:22} obtained for the model with one stock.

We consider a single-period financial market with $m$ stocks. The
terminal prices of the stocks are random and represented by
$V \in \lsp{2}{m}$. The initial prices are the result of the
interaction between \emph{noise traders}, an \emph{insider}, and a
\emph{market maker}:
\begin{enumerate}
\item The noise traders place an order for $U$ stocks;
  $U \in \lsp{2}{m}$.
\item The insider knows the {value} of $Y\set (U,V)$ and places an
  order for $Q$ stocks.  The {trading strategy} $Q$ and the total
  order $R=Q + U$ are $m$-dimensional $Y$-measurable random variables.
\item The market maker observes only the total order $R=Q+U$.  He
  quotes the price $f(R)$ according to a {pricing rule}
  $\map{f}{\real{m}}{\real{m}}$, which is a Borel function.
\end{enumerate}

\begin{Definition}
  \label{def:1}
  Let $U,V,R\in \lsp{2}{m}$, $\map{f}{\real{m}}{\real{m}}$ be a Borel
  function, denote $Y\set(U,V)$, and assume that $R$ is
  $Y$-measurable.  The pair $(R,f)$ is called a $Y$-\emph{equilibrium}
  if
  \begin{enumerate}[label = {\rm (\alph{*})}, ref={\rm (\alph{*})}]
  \item \label{item:17} Given the total order $R$, the pricing rule
    $f$ is \emph{efficient} in the sense that
    \begin{equation}
      \label{eq:10}
      f(R) = \cEP{V}{R}. 
    \end{equation}
  \item \label{item:18} Given the pricing rule $f$, the trading
    strategy $Q = R-U$ \emph{maximizes} the insider profit:
    \begin{displaymath}
      Q \in \argmax\descr{q\in \real{m}}{\ip{q}{V-f(q+U)}},  
    \end{displaymath}
    or equivalently,
    \begin{equation}
      \label{eq:11}
      R \in \argmax\descr{r\in \real{m}}{\ip{r-U}{V-f(r)}}.
    \end{equation}
  \end{enumerate}
\end{Definition}

It is natural to expect that an equilibrium pricing function $f$ is
\emph{monotone}:
\begin{displaymath}
  \ip{u-r}{f(u)-f(r)}\geq 0, \quad u,r\in \real{m}. 
\end{displaymath}
In this case, if $\omega \in \Omega$ is such that
$Y(\omega) \in \graph{f}$ or, equivalently,
$V(\omega) = f(U(\omega))$, then an optimal strategy for the insider
is not to trade: $Q(\omega)=0$. Theorem~\ref{th:10}\ref{item:28} shows
that every equilibrium pricing function $f$ is monotone provided that
the support of the law of $Y$ is the whole space $\real{2m}$.

Hereafter, in this section, $S$ is the \emph{standard} matrix from
Example~\ref{ex:1}:
\begin{displaymath}
  S((r,s),(u,v)) = \ip{s}{u} + \ip{r}{v}, \; r,s,u,v \in \real{m}. 
\end{displaymath}
The following theorem shows a close connection between the equilibrium
problem~\eqref{eq:10}--\eqref{eq:11} and the map problem~\eqref{eq:1}
for such $S$.  We point out that a multifunction
$\mmap{T}{\real{m}}{\real{m}}$ is monotone in the classical sense:
\begin{displaymath}
  \ip{u-r}{v-s} \geq 0, \quad v\in T(u), \; s\in T(r), 
\end{displaymath}
if and only if its graph is $S$-monotone.
 
\begin{Theorem}
  \label{th:8}
  Let $U,V \in \lsp{2}{m}$, $Y \set (U,V)$, $\nu \set \law{Y}$, and
  $S\in \msym{2m}m$ be the standard matrix.  The following conditions
  are equivalent:
  \begin{enumerate}[label = {\rm (\alph{*})}, ref={\rm (\alph{*})}]
  \item \label{item:19} There exists a $Y$-equilibrium with a monotone
    pricing function.
  \item \label{item:20} There exist an optimal map $X$
    for~\eqref{eq:1} and an optimal set $G$ for~\eqref{eq:3} such that
    the law of $(X,Y)$ is an optimal plan for~\eqref{eq:2} and the
    projection of $G$ on the first $m$ coordinates is the whole space
    $\real{m}$:
    \begin{equation}
      \label{eq:12}
      \proj{x^1,\dots,x^m}{G} \set \descr{u\in \real{m}}{
        (u,v) \in G\text{~for~some~}v\in \real{m}} = \real{m}. 
    \end{equation}
  \end{enumerate}
  Under these conditions, for any optimal set $G$ for~\eqref{eq:3}
  satisfying~\eqref{eq:12}, there exists a $Y$-equilibrium $(R,f)$
  such that $\graph{f} \subset G$ and $X\set (R,f(R))$ is an optimal
  map for~\eqref{eq:1}.
\end{Theorem}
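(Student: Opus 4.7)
The plan hinges on a direct dictionary between the insider's problem and the $S$-optimization on the graph of $f$, provided by the identity
\[
  \phi(a,b) \set S\big((a,b),Y\big) - \tfrac12 S\big((a,b),(a,b)\big) = \ip{b}{U-a} + \ip{a}{V},
\]
which, when evaluated at $(a,b) = (r, f(r))$ for a monotone $f$, equals $\pi(r) + \ip{U}{V}$, where $\pi(r) \set \ip{r-U}{V - f(r)}$ is the insider's profit. Consequently, pointwise maximization of $\pi$ over $r \in \real{m}$ and pointwise maximization of $\phi$ over $\graph{f}$ are equivalent and have the same optimizer, which locks the two problems together along $\graph{f}$.

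For the implication (a)$\Rightarrow$(b), I would start from an equilibrium $(R, f)$ with monotone $f$, set $X \set (R, f(R))$, and let $G$ be a maximal $S$-monotone extension of $\graph{f}$, so that condition~\eqref{eq:12} is automatic. The first and main obstacle is the martingale-for-informed-order identity
\[
  \cEP{U}{R} = R,
\]
which together with the efficiency condition $f(R) = \cEP{V}{R}$ gives $\cEP{Y}{X} = X$, so $X \in \mathcal{X}(Y)$. I would derive it by testing the pointwise optimality~\eqref{eq:11} against Borel perturbations of the form $R + h(R)$: taking $\sigma(R)$-conditional expectations and using $\cEP{V}{R} = f(R)$ eliminates $V$, yielding the almost sure inequality
\[
  \ip{R + h(R) - \cEP{U}{R}}{f(R + h(R)) - f(R)} \geq 0 \quad \text{for every Borel } h;
\]
combining with the monotonicity of $f$ and the perturbations $h(R) = \lambda(\cEP{U}{R} - R)$ with $\lambda \in (0,1)$ forces $\cEP{U}{R} = R$ on the support of $\law(R)$, with the maximal extension $G$ absorbing any residual degeneracy directions of $f$. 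Once $X \in \mathcal{X}(Y)$ is in hand, Theorem~\ref{th:2}\ref{item:6} applied to $(\law(X,Y), G)$ finishes the job: $X \in G$ gives $X \in P_G(X)$ trivially, and the dictionary above upgrades the equilibrium optimality over $r \in \real{m}$ to $X \in P_G(Y)$, since on the vertical fibers $\{a\} \times T(a)$ of the multifunction $T$ with $\graph{T} = G$, the tangency characterization of $P_G$ makes $f(a)$ the $\phi$-maximizer.

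For the implication (b)$\Rightarrow$(a) together with the final construction, I would view the given optimal $G$ as the graph of a maximal monotone multifunction $T$ with $\dom T = \real{m}$, invoke Lemma~\ref{lem:4}\ref{item:9} to obtain a Borel selector $f_0: \real{m} \to \real{m}$ with $f_0(r) \in T(r)$, and then refine $f_0$ into a Borel $f$ with $\graph{f} \subset G$ and $f(X_1) = X_2$ almost surely, where $X$ is the given optimal map. This refinement is the secondary obstacle: $X \in G$ a.s.\ by Theorem~\ref{th:2}\ref{item:6} gives $X_2 \in T(X_1)$, and one uses that the multi-valued locus of $T$ is a Borel set (by arguments parallel to Lemma~\ref{lem:5}) to redefine $f_0$ on that locus so that $f(X_1) = X_2$ a.s. Setting $R \set X_1$, the equilibrium conditions follow: since $\sigma(X) = \sigma(R, f(R)) = \sigma(R)$, we obtain $f(R) = X_2 = \cEP{V}{X} = \cEP{V}{R}$, which is~\eqref{eq:10}; and for every $r \in \real{m}$, $(r, f(r)) \in \graph{f} \subset G$ combined with $X \in P_G(Y)$ gives $\phi(R, f(R)) \geq \phi(r, f(r))$, i.e., $\pi(R) \geq \pi(r)$, which is~\eqref{eq:11}. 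The two main obstacles therefore both center on taming the multi-valuedness of the maximal monotone extension: the derivation of $\cEP{U}{R} = R$ via a variational argument in one direction, and the prescribed Borel selection matching $X_2$ in the other.
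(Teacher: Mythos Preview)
Your proposal has genuine gaps in both directions, and in each case the fix is the same conditioning trick that you try to avoid.

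\textbf{Direction (a)$\Rightarrow$(b).} The identity $\cEP{U}{R}=R$ is simply false for a general equilibrium with monotone $f$. Take $m=1$, $V\equiv 0$, $U$ standard normal, and $f\equiv 0$. Then the profit $\ip{r-U}{V-f(r)}$ vanishes identically, so any $Y$-measurable $R$ (say $R=U^{2}$) yields an equilibrium, and $f\equiv 0$ is monotone; yet $\cEP{U}{U^{2}}=0\neq U^{2}$. Your variational argument correctly produces $\ip{\delta}{f(R+\lambda\delta)-f(R)}=0$ for $\delta=\cEP{U}{R}-R$ and all $\lambda\in(0,1)$, but when $f$ is flat this gives no information about $\delta$, and the phrase ``the maximal extension $G$ absorbing any residual degeneracy directions'' does not repair this. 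The paper never attempts $\cEP{U}{R}=R$; instead (Lemma~\ref{lem:8}) it sets $W\set\cEP{U}{R}$ and takes the optimal map to be $X\set(W,f(R))=\cEP{Y}{R}$, for which $X\in\mathcal{X}(Y)$ follows immediately from the tower property. Proving $X\in P_G(Y)$ then goes through $\psi_F(Y)=\psi_G(Y)$ via a Jensen argument, not through the fiberwise tangency claim you sketch.

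\textbf{Direction (b)$\Rightarrow$(a) and the last assertion.} You assume you can find a Borel $f$ with $\graph f\subset G$ and $f(X_{1})=X_{2}$ a.s., and later write $\sigma(X)=\sigma(R,f(R))=\sigma(R)$. Both require $X_{2}$ to be $X_{1}$-measurable, which is nowhere given: the hypothesis is only that $X$ is $Y$-measurable and $X=\cEP{Y}{X}$. If $T(X_{1})$ is genuinely multi-valued on a set of positive measure, $X_{2}$ may depend on more than $X_{1}$, and no single-valued selector $f$ can satisfy $f(X_{1})=X_{2}$. The paper (Lemma~\ref{lem:9}) again sidesteps this by conditioning: with $X=(R,W)$ it defines $g(R)\set\cEP{W}{R}$, shows $Z\set(R,g(R))$ is a \emph{new} optimal map (via $S(X-Z,X-Z)=0$ and the tower property), and then patches $g$ with a measurable selector of $T$ off the support of $R$ to get $f$ with $\graph f\subset G$. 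The equilibrium is $(R,f)$ with $(R,f(R))=Z$, not the original $X$.
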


\begin{Remark}
  \label{rem:1}
  Let $S\in \msym{2m}m$ be the standard matrix. Every $G\in \mset{S}$
  is the graph of a uniquely defined maximal monotone multifunction
  $\mmap{T}{\real{m}}{\real{m}}$. We have
  \begin{displaymath}
    \dom{T} \set \descr{u\in \real{m}}{T(u) \not = \emptyset} =
    \proj{x^1,\dots,x^m} G. 
  \end{displaymath}
  Thus, $\proj{x^1,\dots,x^m} G = \real{m}$ if and only if
  $\dom{T} = \real{m}$.
\end{Remark}

The proof of Theorem~\ref{th:8} is split into lemmas. We extend the
definitions of the Fitzpatrick-type function $\psi_F$ and the
multifunction $P_F$ to any closed set $F\subset \real{2m}$:
\begin{align*}
  \psi_F(y) & \set \sup_{x\in F} \cbraces{S(x,y) - \frac12 S(x,x)}, \\
  P_F(y) & \set \argmax_{x\in F} \cbraces{S(x,y) - \frac12
           S(x,x)}, \; y\in \real{2m}.  
\end{align*}
From the definition of the subdifferential, we immediately obtain
\begin{equation}
  \label{eq:13}
  P_F(y) \subset \descr{x\in F}{Sx \in \partial \psi_F(y)}, \quad y \in
  \real{2m}.  
\end{equation}
Given an equilibrium $(R,f)$ and taking $F \set \cl{\graph{f}}$, the
insider profit can be written as
\begin{displaymath}
  \ip{R-U}{V-f(R)} = \psi_F(Y) - \frac12 S(Y,Y)   
\end{displaymath}
and the profit-maximization condition~\eqref{eq:11} as
$(R,f(R)) \in P_F(Y)$.

\begin{Lemma}
  \label{lem:6}
  Let $S\in \msym{2m}{m}$ be the standard matrix and $F$ be a closed
  set in $\real{2m}$ such that $\proj{x^1,\dots,x^m}{F} = \real{m}$.
  Then
  \begin{equation}
    \label{eq:14}
    \psi_F(y)\geq \frac 12 S(y,y), \quad
    y\in \real{2m}.  
  \end{equation}
\end{Lemma}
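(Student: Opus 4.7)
The plan is to exhibit, for each fixed $y = (u,v) \in \real{m} \times \real{m}$, a specific element $x \in F$ that achieves the desired lower bound inside the supremum defining $\psi_F$. The key observation is that for the standard matrix $S$, the scalar square $S((u,w),(u,w)) = 2\ip{u}{w}$ equals the ``cross'' term $S((u,w),(u,v)) - \ip{u}{v}$, which makes the quadratic penalty cancel whenever the first half of $x$ matches the first half of $y$.

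More concretely, I would first write $y = (u,v)$ with $u,v \in \real{m}$ and record the identity
\begin{displaymath}
  \tfrac12 S(y,y) = \ip{u}{v}.
\end{displaymath}
Next, I would invoke the assumption $\proj{x^1,\dots,x^m} F = \real{m}$ to select some $w = w(u) \in \real{m}$ with $x\set(u,w) \in F$. A direct computation using the standard form gives
\begin{align*}
  S(x,y) - \tfrac12 S(x,x)
  &= \bigl(\ip{w}{u} + \ip{u}{v}\bigr) - \ip{u}{w} \\
  &= \ip{u}{v} = \tfrac12 S(y,y).
\end{align*}
Since this value is attained by an admissible $x \in F$, the supremum defining $\psi_F(y)$ is at least $\tfrac12 S(y,y)$, which is~\eqref{eq:14}.

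There is no serious obstacle: the statement is essentially a computation that exploits the special block structure of the standard $S$, together with the surjectivity of the first-coordinate projection of $F$ to produce a valid test point. The only thing worth being careful about is to avoid assuming that $w$ depends measurably on $u$ or that any supremum is attained — the argument only needs one admissible $x$ per $y$, which is exactly what the projection hypothesis supplies.
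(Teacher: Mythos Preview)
Your proof is correct and is essentially the same as the paper's: both pick $x=(u,w)\in F$ with the same first block as $y=(u,v)$ and observe that this forces $S(x,y)-\tfrac12 S(x,x)=\tfrac12 S(y,y)$. The only cosmetic difference is that the paper phrases the computation via the polarization identity $S(x,y)-\tfrac12 S(x,x)=\tfrac12 S(y,y)-\tfrac12 S(x-y,x-y)$ together with $S(x-y,x-y)=2\ip{0}{w-v}=0$, whereas you expand the terms directly.
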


\begin{proof}
  Let $y = (u,v)$, where $u, v\in \real{m}$. Taking $x= (u,w) \in F$
  and using the fact that $S(x-y,x-y) = 2 \ip{u-u}{v-w} = 0$, we
  obtain
  \begin{displaymath}
    \frac12 S(y,y) = S(x,y) - \frac12 S(x,x) \leq \psi_F(y),
  \end{displaymath}
  and the result follows.
\end{proof}

\begin{Lemma}
  \label{lem:7}
  Let $S\in \msym{2m}{m}$ be the standard matrix and $F$ be a closed
  $S$-monotone set in $\real{2m}$ satisfying~\eqref{eq:14}.  Then
  \begin{displaymath}
    H \set  \descr{x}{\psi_F(x)=\frac 12 S(x,x)}\in \mset{S}.
  \end{displaymath}
  For $G\in \mset{S}$, we have
  \begin{displaymath}
    F\subset G\iff G=H  \iff \psi _G=\psi _F\iff \psi _G\leq\psi _F.
  \end{displaymath}
\end{Lemma}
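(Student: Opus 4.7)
The plan is to identify $H$ with the $S$-polar of $F$: set $\hat F \set \descr{y\in\real{2m}}{S(y-z,y-z)\geq 0 \text{ for all } z\in F}$. Since
\begin{displaymath}
  \psi_F(y) - \frac12 S(y,y) = -\frac12 \inf_{z\in F} S(z-y,z-y),
\end{displaymath}
condition~\eqref{eq:14} says the infimum on the right is $\leq 0$, and $H$ is the set where it equals $0$, which is exactly $\hat F$. The inclusion $F\subset H$ is immediate from the $S$-monotonicity of $F$, which forces $\psi_F(y)=\frac12 S(y,y)$ for $y\in F$.

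The heart of the proof is to show that $H$ is $S$-monotone. The key tool is the polarization identity
\begin{displaymath}
  S(z-y_\mu,z-y_\mu) = \mu S(z-y_1,z-y_1) + (1-\mu)S(z-y_2,z-y_2) - \mu(1-\mu)S(y_1-y_2,y_1-y_2),
\end{displaymath}
valid for $y_\mu\set\mu y_1+(1-\mu)y_2$ and all $z$. For $y_1,y_2\in H=\hat F$ and $z\in F$, both $S(z-y_i,z-y_i)\geq 0$, hence
\begin{displaymath}
  \inf_{z\in F}S(z-y_\mu,z-y_\mu) \geq -\mu(1-\mu)S(y_1-y_2,y_1-y_2).
\end{displaymath}
Applying~\eqref{eq:14} to $y_\mu$ makes the left-hand side $\leq 0$, which for $\mu\in(0,1)$ forces $S(y_1-y_2,y_1-y_2)\geq 0$. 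Maximality of $H$ then follows from the antitonicity of the polar: $F\subset H$ gives $\hat H\subset\hat F=H$, while the $S$-monotonicity just proved is $H\subset\hat H$; thus $H=\hat H$, which is the characterization of maximality in $\mset{S}$.

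For the chain of equivalences, let $G\in\mset{S}$. Several implications are immediate: $G=H\Rightarrow F\subset G$ via $F\subset H$; $\psi_G=\psi_F\Rightarrow\psi_G\leq\psi_F$; and $\psi_G\leq\psi_F\Rightarrow F\subset G$ because $\psi_F=\frac12 S(\cdot,\cdot)$ on $F$ while $\psi_G\geq\frac12 S(\cdot,\cdot)$ with equality exactly on $G$. Also $F\subset G\Rightarrow G=\hat G\subset\hat F=H$, forcing $G=H$ by maximality of $G$. The remaining step $G=H\Rightarrow\psi_G=\psi_F$, i.e.\ $\psi_H=\psi_F$, is the main subtlety. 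I will establish it by applying the polarization identity once more with $y_1=y$ arbitrary and $y_2=x\in H$: using $S(z-x,z-x)\geq 0$ for $z\in F$ and sending $\mu\to 0^+$ in
\begin{displaymath}
  \mu\inf_{z\in F}S(z-y,z-y)\leq \inf_{z\in F}\bigl[\mu S(z-y,z-y)+(1-\mu)S(z-x,z-x)\bigr] \leq \mu(1-\mu)S(y-x,y-x)
\end{displaymath}
yields $\inf_{z\in F}S(z-y,z-y)\leq S(x-y,x-y)$ for every $x\in H$; combined with $F\subset H$, this gives $\inf_F=\inf_H$ and hence $\psi_F=\psi_H$.

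The main conceptual hurdle is spotting that~\eqref{eq:14} translates verbatim into $H=\hat F$; once this is in hand, a single polarization identity drives both $S$-monotonicity of $H$ and the crucial equality $\psi_F=\psi_H$, and the remaining equivalences reduce to bookkeeping with polars and maximality.
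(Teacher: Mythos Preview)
Your proof is correct and takes a genuinely different route from the paper's. The paper does not prove directly that $H$ is $S$-monotone; instead it invokes an external existence result, Theorem~A.3 in~\cite{KramSirb:24}, to produce some $G\in\mset{S}$ with $\psi_G\leq\psi_F$, then argues $F\subset G$ (hence $\psi_F\leq\psi_G$, hence $\psi_F=\psi_G$) and reads off $G=\{x:\psi_G(x)=\tfrac12 S(x,x)\}=H$; the equality $\psi_H=\psi_F$ and the remaining equivalences then drop out. You bypass Theorem~A.3 entirely: the identification $H=\hat F$ under~\eqref{eq:14} together with the quadratic polarization identity gives both the $S$-monotonicity of $H$ and the key equality $\psi_H=\psi_F$ by elementary means, and maximality follows from the polar characterization $H=\hat H$. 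What you gain is a self-contained argument that does not rely on the nontrivial Fitzpatrick-function existence theorem; what the paper gains is brevity, since once Theorem~A.3 is available the identification $H=G$ is immediate and the equivalences come essentially for free. Your two uses of the polarization identity are the conceptual core and are worth keeping even if one also knows Theorem~A.3.
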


\begin{proof}
  For any $G\in \mset{S}$, Theorem~A.2 in \cite{KramSirb:24} shows
  that
  \begin{displaymath}
    \psi_G(x) = \frac 12 S(x,x), \; x \in G, \text{~and~}  \psi_G(x)
    > \frac12 S(x,x),\; x \in \real{2m}\setminus G. 
  \end{displaymath}
  Let $G\in\mset{S}$ be such that $\psi_G\leq \psi _F$.  As $F$ is a
  closed $S$-monotone set,
  \begin{displaymath}
    \psi_F(x)=\frac 12 S(x,x), \quad x\in F.
  \end{displaymath}
  It follows that $F \subset G$ and then, trivially, that
  $\psi_F \leq \psi_G$, $\psi_F = \psi_G$, and $G=H$.
  
  By Theorem A.3 in \cite{KramSirb:24}, the bound~\eqref{eq:14}
  implies the existence of $G\in \mset{S}$ such that
  $\psi_G\leq \psi_F$. Consequently, $H=G\in \mset{S}$, $F\subset H$,
  and $\psi_F=\psi_H$.

  If $G\in \mset{S}$ and $F\subset G$, then
  $\psi _H=\psi_F \leq \psi_G$ and
  \begin{displaymath}
    G = \descr{x}{\psi_G(x)=\frac 12 S(x,x)} \subset
    \descr{x}{\psi_H(x)=\frac 12 S(x,x)} = H. 
  \end{displaymath}
  The maximality property of $G\in \mset{S}$ implies that $G=H$.
\end{proof}

The next lemma provides a construction of an optimal map from an
equilibrium.

\begin{Lemma}
  \label{lem:8}
  Let $U,V \in \lsp{2}{m}$, $Y \set (U,V)$, $(R,f)$ be a
  $Y$-equilibrium, $W\set \cEP{U}{R}$, and $F \set \cl{\graph{f}}$.
  Let $\nu \set \law{Y}$ and $S \in \msym{2m}m$ be the standard
  matrix. Then
  \begin{enumerate}[label = {\rm (\alph{*})}, ref={\rm (\alph{*})}]
  \item \label{item:21} $X\set (W,f(R))$ is an optimal map
    for~\eqref{eq:1} and the law of $(X,Y)$ is an optimal plan
    for~\eqref{eq:2}.
  \item \label{item:22} There exists a $G\in \mset{S}$ such that
    $\psi_F\geq \psi_G$. Any such set $G$ is optimal for \eqref{eq:3}.
  \item \label{item:23} A set $G\in \mset{S}$ is optimal
    for~\eqref{eq:3} if and only if $\psi_F(Y) = \psi_G(Y)$.
  \item \label{item:24} If $f$ is monotone, then there exists a unique
    $G\in \mset{S}$ containing $F$.  The set $G$ is optimal for
    \eqref{eq:3} and $\proj{x^1,\dots,x^m} G = \real{m}$.
  \end{enumerate}
\end{Lemma}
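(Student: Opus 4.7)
The plan is to use the Fitzpatrick representation of the insider's optimization to compute $\psi_F(Y)$, match it with the value of the plan $\law{X,Y}$, and collapse the chain of weak-duality inequalities between~\eqref{eq:2} and~\eqref{eq:3}.

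\textbf{Parts (a) and (b).} I would first verify $X=(W,f(R))\in\mathcal{X}(Y)$: both components are $R$-measurable hence $Y$-measurable, and the tower property together with the efficiency rule $\cEP{V}{R}=f(R)$ gives $\cEP{Y}{X}=X$. The key algebraic identity
\begin{displaymath}
  \ip{r-U}{V-f(r)}=S\cbraces{(r,f(r)),Y}-\tfrac12 S\cbraces{(r,f(r)),(r,f(r))}-\tfrac12 S(Y,Y)
\end{displaymath}
turns~\eqref{eq:11} into the assertion that $(R,f(R))$ attains the supremum defining $\psi_{\graph f}(Y)$, which equals $\psi_F(Y)$ because closure does not change the sup. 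Hence $\psi_F(Y)=\ip{f(R)}{U}+\ip{R}{V-f(R)}$ a.s.; taking expectation and using $\cEP{V-f(R)}{R}=0$ from~\eqref{eq:10} yields $\EP{\psi_F(Y)}=\EP{\ip{f(R)}{U}}$, and a direct expansion of $S(X,Y)$ shows $\tfrac12\EP{S(X,Y)}$ equals the same quantity. Since $f$ is defined on all of $\real m$, $\proj{x^1,\dots,x^m}F=\real m$, so Lemma~\ref{lem:6} gives $\psi_F(y)\geq\tfrac12 S(y,y)$, and Theorem~A.3 of~\cite{KramSirb:24} supplies some $G\in\mset{S}$ with $\psi_G\leq\psi_F$. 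The weak-duality chain
\begin{displaymath}
  \tfrac12\EP{S(X,Y)}\leq\max_{\gamma\in\Gamma(\nu)}\tfrac12\int S\,d\gamma=\min_{G'\in\mset{S}}\EP{\psi_{G'}(Y)}\leq\EP{\psi_G(Y)}\leq\EP{\psi_F(Y)}
\end{displaymath}
then collapses to equalities, proving (a) that $\law{X,Y}$ is an optimal plan (so $X$ is an optimal map) and (b) that any such $G$ is optimal for~\eqref{eq:3}.

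\textbf{Parts (c) and (d).} For (c), sufficiency is immediate because $\EP{\psi_G(Y)}=\EP{\psi_F(Y)}$ coincides with the common primal-dual value; for necessity I would apply Theorem~\ref{th:2} to the optimal plan $\law{X,Y}$ and the optimal $G$ to obtain $X\in P_G(Y)$ a.s., so $\psi_G(Y)=S(X,Y)-\tfrac12 S(X,X)$, and then compare with the equilibrium identity for $\psi_F(Y)$, reducing the difference to $\ip{R-W}{V-f(R)}$, which vanishes after conditioning on $R$. For (d), if $f$ is monotone then $F=\cl{\graph f}$ is closed and $S$-monotone, so Lemma~\ref{lem:7} (applicable by the bound $\psi_F\geq\tfrac12 S(\cdot,\cdot)$ from Lemma~\ref{lem:6}) produces the unique $G\in\mset{S}$ containing $F$, namely $G=\descr{x}{\psi_F(x)=\tfrac12 S(x,x)}$, characterized by $\psi_G=\psi_F$. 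Optimality of $G$ then follows from (c), and $\proj{x^1,\dots,x^m}G\supset\proj{x^1,\dots,x^m}\graph f=\real m$ gives the projection claim.

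\textbf{Main obstacle.} The delicate point is the pointwise a.s.\ equality in (c) for \emph{arbitrary} optimal $G$: equality in expectation follows cleanly from duality, but the pointwise version forces one to reconcile the two evaluation points $(R,f(R))\in\graph f$ and $X=(W,f(R))\in G$, which effectively requires extracting the identity $W=R$ from the insider's first-order optimality combined with the efficiency rule.
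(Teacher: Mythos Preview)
Your argument for parts~(a), (b), and (d) is correct and essentially matches the paper's. The collapse of the weak-duality chain is precisely how the paper proceeds, and your treatment of~(d) via Lemma~\ref{lem:7} is exactly right.

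The gap is in~(c), and you have diagnosed it correctly but resolved it incorrectly. You arrive at
\[
  \psi_F(Y)-\psi_G(Y)=\ip{R-W}{V-f(R)},
\]
and you observe that conditioning on $R$ kills this quantity. But that only gives $\EP{\ip{R-W}{V-f(R)}\,\big|\,R}=0$, not pointwise vanishing, and you then say the fix ``effectively requires extracting the identity $W=R$''. This identity is \emph{false} in general: $W=\cEP{U}{R}$ has no reason to equal $R$, and nothing in the equilibrium conditions forces it.

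The paper sidesteps the direct comparison of $\psi_F(Y)$ with $\psi_G(Y)$ altogether. Fix the specific $G_0\in\mset{S}$ from part~(b) with $\psi_{G_0}\leq\psi_F$ \emph{everywhere}. Your duality chain already gives $\EP{\psi_{G_0}(Y)}=\EP{\psi_F(Y)}$; combined with the pointwise inequality $\psi_{G_0}(Y)\leq\psi_F(Y)$ this forces $\psi_{G_0}(Y)=\psi_F(Y)$ a.s. Now let $H\in\mset{S}$ be any optimal set. Applying Theorem~\ref{th:2} to both $G_0$ and $H$ (both optimal) gives $X\in P_{G_0}(Y)\cap P_H(Y)$, hence
\[
  \psi_{G_0}(Y)=S(X,Y)-\tfrac12 S(X,X)=\psi_H(Y),
\]
and the bridge through $G_0$ yields $\psi_H(Y)=\psi_F(Y)$ a.s. The point is that the pointwise inequality $\psi_{G_0}\leq\psi_F$ (available only for the set produced by Theorem~A.3, not for a generic optimal $H$) is what upgrades equal expectations to a.s.\ equality; once this is done for $G_0$, the common value $S(X,Y)-\tfrac12 S(X,X)$ transfers the conclusion to every optimal $H$ without ever touching $\ip{R-W}{V-f(R)}$.
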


\begin{proof}
  \ref{item:21} + \ref{item:22}: By the definition of equilibrium, $X$
  is $Y$-measurable and $X = \cEP{Y}{R} = \cEP{Y}{X}$. Thus,
  $X \in \mathcal{X}(Y)$. The profit-maximization
  condition~\eqref{eq:11} is equivalent to
  $Z\set (R,f(R)) \in P_F(Y)$. Consequently,
  \begin{gather*}
    \psi_F(Y)  = S(Z,Y) - \frac12 S(Z,Z), \\
    \EP{\psi_F(Y)|Z} = \EP{\psi_F(Y)|R}  = S(Z,X) - \frac12 S(Z,Z) = \frac12 S(X,X),
  \end{gather*}
  where the last equality holds because
  \begin{displaymath}
    S(Z-X,Z-X) = S((W-R,0), (W-R,0))= 0.
  \end{displaymath} 

  Clearly, the projection of $F = \cl\graph{f}$ on the first $m$
  coordinates is the whole space $\real{m}$. By Lemma~\ref{lem:6}, the
  function $\psi_F$ has the lower bound~\eqref{eq:14}. Theorem~A.3
  in~\cite{KramSirb:24} then yields $G\in \mset{S}$ such that
  $\psi_F \geq \psi_G$, proving the first part of~\ref{item:22}.
  By~Theorem~A.2 in~\cite{KramSirb:24},
  \begin{displaymath}
    \psi_G(x) = \frac 12 S(x,x), \; x \in G, \text{~and~}  \psi_G(x)
    > \frac12 S(x,x),\; x \in \real{2m}\setminus G. 
  \end{displaymath}
  Using Jensen's inequality, we obtain
  \begin{displaymath}
    \EP{\psi_G(Y)} \geq \EP{\psi_G(X)} \geq \frac 12
    \EP{S(X,X)} = \EP{S(X,Y) - \frac12 S(X,X)}. 
  \end{displaymath}
  As $\psi _G\leq \psi _F$ and $\EP{\psi _F(Y)}=\frac 12 \EP{S(X,X)}$,
  we deduce that
  \begin{displaymath}
    \psi_F(Y) = \psi_G(Y) \text{~and~}  \psi_G(X) = \frac12 S(X,X)
  \end{displaymath}
  and then that $X\in G$ and $\psi_G(Y) = S(X,Y) - \frac12 S(X,X)$,
  which is equivalent to saying that $X \in P_G(Y)$.
  Item~\ref{item:21} and the second part of~\ref{item:22} follow now
  from Theorem~\ref{th:2}.
 
  \ref{item:23}: Fix $G$ as in \ref{item:22}. Let $H\in \mset{S}$. If
  $\psi_F(Y) = \psi_H(Y)$, then $\psi_G(Y) = \psi_H(Y)$ and the
  optimality of $H$ readily follows. Conversely, if $H$ is optimal,
  then $X \in P_H(Y)$ by Theorem~\ref{th:2}.  Hence,
  $X \in P_G(Y) \cap P_H(Y)$ and we again obtain that
  $\psi_G(Y) = \psi_H(Y) = \psi_F(Y)$.

  \ref{item:24}: As $F$ is $S$-monotone and
  $\proj{x^1,\dots,x^m} F = \real{m}$, Lemmas~\ref{lem:6}
  and~\ref{lem:7} yield a unique $G\in \mset{S}$ containing $F$ or,
  equivalently, such that $\psi_G=\psi _F$.  By~\ref{item:23}, $G$ is
  a dual minimizer. Clearly, $\proj{x^1,\dots,x^m} G = \real{m}$.
\end{proof}

In the next lemma, an equilibrium is constructed from an optimal map
$X$ and an optimal set $G$ such that
$\proj{x^1,\dots,x^m} G = \real{m}$.

\begin{Lemma}
  \label{lem:9}
  Let $U,V,R,W \in \lsp{2}{m}$, $Y \set (U,V)$, and $X\set (R,W)$. Let
  $\nu \set \law{Y}$ and $S\in \msym{2m}{m}$ be the standard
  matrix. Assume that there exists an optimal set $G$ for~\eqref{eq:3}
  such that $\proj{x^1,\dots,x^m} G = \real{m}$.  If $X$ is an optimal
  map for~\eqref{eq:1} and the law of $(X,Y)$ is an optimal plan
  for~\eqref{eq:2}, then there exists a monotone Borel function
  $\map{f}{\real{m}}{\real{m}}$ such that
  \begin{equation}
    \label{eq:15}
    f(R) = \cEP{W}{R} \text{~and~} F\set \cl\graph{f}\subset G. 
  \end{equation}
  In this case, $(R,f(R))$ is an optimal map for~\eqref{eq:1}, the
  pair $(R,f)$ is a $Y$-equilibrium, and $G$ is the only set in
  $\mset{S}$ containing $F$.
\end{Lemma}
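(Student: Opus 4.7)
The plan is to realize $f$ as a monotone Borel selection of the maximal monotone multifunction associated with $G$. By Remark~\ref{rem:1}, $G=\graph{T}$ for a unique maximal monotone $\mmap{T}{\real{m}}{\real{m}}$ with $\dom{T}=\real{m}$. Since $X=(R,W)$ is an optimal map and $\law{X,Y}$ is an optimal plan, Theorem~\ref{th:2}\ref{item:6} gives $X\in G$ a.s., i.e., $W\in T(R)$ a.s. The sections $T(r)$ of a maximal monotone operator are closed and convex, so the conditional version of Jensen's inequality for convex-valued multifunctions yields $\cEP{W}{R}\in T(R)$ a.s. Choose a Borel function $\map{g}{\real{m}}{\real{m}}$ with $g(R)=\cEP{W}{R}$ a.s., and let $A\set\descr{r\in \real{m}}{g(r)\in T(r)}$, a Borel set of full $\law{R}$-measure. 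Because $G$ is closed and $\dom{T}=\real{m}$, the Kuratowski--Ryll-Nardzewski measurable selection theorem yields a Borel selection $\map{h}{\real{m}}{\real{m}}$ of $T$. Put $f(r)\set g(r)\setind{A}(r)+h(r)\setind{\real{m}\setminus A}(r)$. Then $f$ is a Borel selection of $T$, so $\graph{f}\subset G$ and $F\set \cl\graph{f}\subset G$; as any selection of a monotone multifunction is a monotone function, $f$ is monotone. By construction, $f(R)=g(R)=\cEP{W}{R}$ a.s.

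Since $F$ is closed, $S$-monotone, and $\proj{x^1,\dots,x^m}{F}=\real{m}$, Lemmas~\ref{lem:6} and~\ref{lem:7} give that $G$ is the unique element of $\mset{S}$ containing $F$ and that $\psi_F=\psi_G$. The random vector $(R,f(R))$ is $Y$-measurable, and the tower property combined with $(R,W)\in \mathcal{X}(Y)$ yields
\begin{displaymath}
  \cEP{U}{R}=\cEP{\cEP{U}{R,W}}{R}=\cEP{R}{R}=R \quad \text{and} \quad \cEP{V}{R}=\cEP{W}{R}=f(R),
\end{displaymath}
so $(R,f(R))\in \mathcal{X}(Y)$. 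Expanding the standard bilinear form and using these identities,
\begin{displaymath}
  \EP{S((R,f(R)),Y)}=2\EP{\ip{R}{f(R)}}=2\EP{\ip{R}{\cEP{W}{R}}}=2\EP{\ip{R}{W}}=\EP{S(X,Y)},
\end{displaymath}
so $(R,f(R))$ attains the supremum and is an optimal map; in particular, $\law{(R,f(R)),Y}$ is an optimal plan.

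The efficiency~\eqref{eq:10} of the pricing rule is immediate from $f(R)=\cEP{W}{R}=\cEP{V}{R}$. For the profit-maximization condition~\eqref{eq:11}, apply Theorem~\ref{th:2} to the optimal map $(R,f(R))$ and the optimal dual set $G$ to conclude $(R,f(R))\in P_G(Y)$ a.s. As $\psi_G=\psi_F$ and $(R,f(R))\in F$, this yields $(R,f(R))\in P_F(Y)$ a.s., which is exactly~\eqref{eq:11}. The main obstacle is the measurable assembly of $f$ in the first paragraph: one must combine the fact that $T$ is closed- and convex-valued with $\dom{T}=\real{m}$ (for the existence of a Borel selection via Kuratowski--Ryll-Nardzewski) with the fact that $\cEP{W}{R}\in T(R)$ a.s.\ (via conditional Jensen for convex sets), and then splice the regular conditional expectation with the global Borel selection to produce a single Borel selection of $T$, which is automatically monotone.
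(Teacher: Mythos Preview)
Your proof is correct and follows the same construction as the paper: splice a Borel version $g$ of $\cEP{W}{R}$ with a measurable selection $h$ of the multifunction $T$ whose graph is $G$, then verify that $(R,f(R))$ is an optimal map and that $(R,f)$ is a $Y$-equilibrium via $(R,f(R))\in P_F(Y)$. The one substantive difference is in how you establish $(R,g(R))\in G$ almost surely: you argue directly from the closed convexity of $T(R)$ and a conditional-Jensen principle, whereas the paper first shows $Z\set(R,g(R))$ is an optimal map (using the same tower-property computation you carry out afterwards) and only then invokes Theorem~\ref{th:2} to place $Z$ in $P_G(Y)\subset G$. Both orderings are valid; yours is slightly more direct but leans on the conditional-Jensen fact as an external tool (the support-function argument behind it is exactly the one the paper spells out in the proof of Theorem~\ref{th:3}), while the paper's route is entirely self-contained within the optimality machinery already developed.
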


\begin{proof}
  Let $\map{g}{\real{m}}{\real{m}}$ be a Borel function such that
  $g(R) = \EP{W|R}$. Using the tower property, we deduce that
  \begin{align*}
    Z \set (R,g(R)) &= \cEP{X}{Z} = \cEP{Y}{Z}, \\
    S(X-Z,X-Z) & = S((0,W-f(R)),(0,W-f(R)))  = 0, \\    
    \cEP{S(X,X)}{Z} &= \cEP{S(X-Z,X-Z)}{Z} + S(Z,Z) =  S(Z,Z), \\
    \EP{S(X,Y)} & = \EP{S(X,X)} = \EP{S(Z,Z)} = \EP{S(Z,Y)}. 
  \end{align*}
  As $Z$ is $R$-measurable, it is also $X$ and $Y$-measurable. Thus,
  $Z \in \mathcal{X}(Y)$.  Since the law of $(X,Y)$ is an optimal
  plan, the law of $(Z,Y)$ is an optimal plan as well. In particular,
  $Z$ is an optimal map.

  By Theorem~\ref{th:2}, $Z \in P_G(Y)$. In particular,
  $Z = (R, g(R)) \in G$. Let $\map{h}{\real{m}}{\real{m}}$ be a Borel
  function, whose graph is contained in $G$. The existence of such
  function follows from Remark~\ref{rem:1} and
  Lemma~\ref{lem:4}\ref{item:9}. Then
  \begin{displaymath}
    f(x) \set g(x) \ind{(x,g(x)) \in G} + h(x) \ind{(x,g(x)) \not\in
      G}, \quad x\in \real{m}, 
  \end{displaymath}
  is a Borel function satisfying~\eqref{eq:15}.
 
  For \emph{any} Borel function $f$ satisfying~\eqref{eq:15}, we have
  that $Z = (R, f(R))$ (almost surely, as usual). We have already
  shown that $Z$ is an optimal map. As $Z \in P_G(Y)$ and
  $F \set \cl\graph{f}\subset G$, we obtain that $Z \in P_F(Y)$, which
  is exactly the profit maximizing condition~\eqref{eq:11}. Thus,
  $(R,f)$ is a $Y$-equilibrium. Finally, Lemma~\ref{lem:7} shows that
  $G$ is the only maximal monotone set containing $F$.
\end{proof}

\begin{proof}[Proof of Theorem~\ref{th:8}]
  The implication \ref{item:19}~$\implies$~\ref{item:20} is proved in
  Lemma~\ref{lem:8}.  The reverse implication
  \ref{item:20}~$\implies$~\ref{item:19} and the last assertion of the
  theorem follow from Lemma~\ref{lem:9}.
\end{proof}

We now state explicit sufficient conditions for the existence of an
equilibrium with monotone pricing function. For an $m$-dimensional
random variable $U$, we write $\supp{U}$ for the support of the law of
$U$.

\begin{Theorem}
  \label{th:9}
  Let $U,V \in \lsp{2}{m}$, $Y \set (U,V)$, $\nu \set \law{Y}$, and
  $S\in \msym{2m}m$ be the standard matrix. Assume~\eqref{eq:8},
  \ref{item:13}, and that $\conv{\supp{U}} = \real{m}$ or $V$ is
  bounded.  Then there exists an optimal set $G$ for~\eqref{eq:3} and
  a $Y$-equilibrium $(R,f)$ such that $f$ is monotone,
  $X\set (R,f(R))$ is an optimal map for~\eqref{eq:1}, the law of
  $(X,Y)$ is an optimal plan for~\eqref{eq:2}, and
  $\graph{f} \subset G$.
\end{Theorem}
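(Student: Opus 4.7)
The strategy is to apply Theorem~\ref{th:8}. Under~\eqref{eq:8} and~\ref{item:13}, Theorem~\ref{th:4} produces an optimal map $X$ for~\eqref{eq:1} such that the law of $(X,Y)$ is an optimal plan for~\eqref{eq:2}. Hence, the only remaining task is to exhibit an optimal set $G \in \mset{S}$ for the dual problem~\eqref{eq:3} satisfying $\proj{x^1,\dots,x^m}{G} = \real{m}$, or equivalently (Remark~\ref{rem:1}), an optimal $G$ whose associated maximal monotone multifunction $T$ has $\dom{T} = \real{m}$.

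Let $G_0$ be any optimal dual minimizer (existence noted after~\eqref{eq:3}), with associated multifunction $T_0$. By Theorem~\ref{th:2}, $X = (X_1, X_2) \in G_0$ almost surely, while the martingale property $X = \cEP{Y}{X}$ decomposes as $X_1 = \cEP{U}{X}$ and $X_2 = \cEP{V}{X}$. If $\dom{T_0} = \real{m}$, take $G = G_0$; otherwise the extra hypothesis of the theorem is used to modify $G_0$.

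Consider first the case $|V| \le M$. Then $|X_2| \le M$ almost surely, so $X \in G_1 := G_0 \cap (\real{m} \times \overline{B}(0,M))$, a closed $S$-monotone set. I would then produce $G \in \mset{S}$ with $G_1 \subset G \subset \real{m} \times \overline{B}(0,M)$. The range inclusion forces $\dom{T_G} = \real{m}$, since at a boundary point of the domain of a maximal monotone operator on $\real{m}$ the values contain a nontrivial recession direction and would hence be unbounded, contradicting the range bound. Optimality of $G$ is the delicate part: the sup defining $\psi_{G_0}(y)$ is attained at $X(y) \in G_1 \subset G$ for $\nu$-a.e.~$y$, so $\psi_{G}(Y) \ge \psi_{G_0}(Y)$ almost surely; for the reverse inequality one has to arrange the extension so that every newly added pair $(u', v') \in G \setminus G_0$ satisfies the Fitzpatrick inequality $\ip{u' - X_1}{v - X_2} \le \ip{v' - X_2}{u' - u}$ for $\nu$-a.e.\ $y = (u, v)$, which reduces to a careful compatibility check using the $S$-monotonicity of $G \ni X$ together with $|v|, |v'|, |X_2| \le M$.

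In the case $\conv{\supp{U}} = \real{m}$, I would argue that every optimal $G_0$ already satisfies $\dom{T_0} = \real{m}$. If not, there exists $u_0 \notin \overline{\dom{T_0}}$ and, by Hahn--Banach, a hyperplane strictly separating $u_0$ from $\overline{\dom{T_0}}$. Combining this separating functional with the constraint $X_1 \in \dom{T_0}$ a.s., the identity $X_1 = \cEP{U}{X}$, and the density of $\supp{U}$ across the separating hyperplane (forced by $\conv{\supp{U}} = \real{m}$), one constructs a function $h \in \mathcal{H}^j_C$ with $C$ as in~\ref{item:13} such that $h(SY) = 0$ on a set of positive $\nu$-measure, contradicting~\ref{item:13}. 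Once an optimal $G$ with $\proj{x^1,\dots,x^m}{G} = \real{m}$ has been obtained, Theorem~\ref{th:8} delivers the $Y$-equilibrium $(R,f)$ with $f$ monotone, $\graph{f} \subset G$, and $(R, f(R))$ an optimal map for~\eqref{eq:1}. The main obstacle is the simultaneous preservation of maximality and dual optimality in the extension step of the bounded case; the separation argument of the second case is comparatively routine given~\ref{item:13}.
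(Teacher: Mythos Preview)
Your overall strategy matches the paper's: obtain an optimal map from Theorem~\ref{th:4}, then find an optimal $G$ with $\proj{x^1,\dots,x^m}G=\real{m}$, then invoke Theorem~\ref{th:8}. However, both of your case arguments have real gaps.

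\textbf{Case $\conv{\supp{U}}=\real{m}$.} Your plan is to derive a contradiction from~\ref{item:13} via a separating hyperplane for $\overline{\dom T_0}$. This does not work. A hyperplane $\{u\in\real{m}:\ip{w}{u}=c\}$ lifts to $\{(u,v)\in\real{2m}:\ip{w}{u}=c\}$, whose $S$-normal (for the standard $S$) is $(0,w)$, and $S((0,w),(0,w))=2\ip{0}{w}=0$. So the normal is $S$-\emph{isotropic}, not strictly $S$-positive, and~\ref{item:13} says nothing about such surfaces. There is also no mechanism linking the constraint $X_1\in\dom T_0$ to the event $\{h(SY)=0\}$ having positive $\nu$-measure. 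The paper proceeds quite differently and without using~\ref{item:13}: from $\EP{\psi_G(Y)}<\infty$ it gets $\proj{x^1,\dots,x^m}\dom\psi_G=\real{m}$, picks for a given $u\in\real{m}$ a $v$ with $(u,v)\in\dom\psi_G$, applies Minty's parametrization to produce $(r(t),s(t))\in G$ with $tu+v=tr(t)+s(t)$, and computes
\[
\psi_G(u,v)-\tfrac12 S((u,v),(u,v))\ \ge\ t\,\|r(t)-u\|^2\ \ge\ t\,\dist^2(u,\proj{x^1,\dots,x^m}G),
\]
forcing $\dist(u,\proj{x^1,\dots,x^m}G)=0$ as $t\to\infty$.

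\textbf{Case $V$ bounded.} You correctly observe that $X_2=\cEP{V}{X}$ is bounded and that a maximal monotone $G$ with bounded range must have full domain. But the crucial step---producing a maximal monotone extension of $G_1$ that stays inside $\real{m}\times\overline{B}(0,M)$ \emph{and} is still dual-optimal---is left as a ``careful compatibility check'' that is not carried out; an arbitrary maximal monotone extension of $G_1$ need not satisfy either property. The paper uses an explicit truncation: with $B$ a closed convex set whose interior contains the values of $X_2$ and $N_B$ its normal cone, set
\[
G\ :=\ \bigl\{(u+r,v):(u,v)\in G_0,\ r\in N_B(v)\bigr\}.
\]
By a standard result (Rockafellar--Wets, Example~12.45(a)) this $G$ is maximal monotone, agrees with $G_0$ on $\real{m}\times\interior{B}$, and has $\proj{x^1,\dots,x^m}G=\real{m}$. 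A direct computation then shows $\psi_G(u,v)\le\psi_{G_0}(u,v)$ for all $(u,v)\in\real{m}\times B$, whence $\psi_G(Y)=\psi_{G_0}(Y)$ and $G$ is optimal. This normal-cone construction is precisely the missing ingredient in your sketch.
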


The proof of Theorem~\ref{th:9} relies on the following criteria for
the existence of an optimal set $G$ for~\eqref{eq:3} such that
$\proj{x^1,\dots,x^m} G = \real{m}$.

\begin{Lemma}
  \label{lem:10}
  Let $U,V \in \lsp{2}{m}$, $Y \set (U,V)$, and $S\in \msym{2m}m$ be
  the standard matrix.
  \begin{enumerate}[label = {\rm (\alph{*})}, ref={\rm (\alph{*})}]
  \item \label{item:25} If $\conv{\supp{U}} = \real{m}$, then
    $\proj{x^1,\dots,x^m}{G} = \real{m}$ for every optimal set $G$
    for~\eqref{eq:3}.
  \item \label{item:26} If $V$ is bounded, then there exists an
    optimal set $G$ for~\eqref{eq:3} such that
    $\proj{x^1,\dots,x^m}{G} = \real{m}$.
  \end{enumerate}
\end{Lemma}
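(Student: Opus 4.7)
The plan uses Remark~\ref{rem:1}, which identifies $\proj{x^1,\dots,x^m}{G}$ with $\dom T$ for $G = \graph T \in \mset{S}$, together with one auxiliary fact from the theory of maximal monotone operators: if $T$ on $\real{m}$ is maximal monotone with bounded range, then $\dom T = \real{m}$. Indeed, by Minty's theorem $I + T$ maps $\dom T$ onto $\real{m}$, so every $y \in \real{m}$ equals $u + v$ for some $u \in \dom T$ and $v \in T(u)$; if $\abs{v} \le K$ then $\abs{u - y} \le K$, and since $y$ is arbitrary, $u$ ranges over all of $\real{m}$.

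For part~\ref{item:25}, I argue by contradiction. Suppose some optimal $G = \graph T$ has $\dom T \ne \real{m}$. Since $\cl \dom T$ is convex and, by Rockafellar, shares its interior with $\dom T$, the set $\cl \dom T$ is a proper closed convex subset of $\real{m}$. The key estimate is
\begin{displaymath}
  u_0 \notin \cl \dom T \quad \Longrightarrow \quad \psi_G(u_0, v) = +\infty \text{ for all } v \in \real{m}.
\end{displaymath}
To prove this, invoke Rockafellar's local boundedness theorem: at the projection $r^* \in \partial \cl \dom T$ of $u_0$ onto $\cl \dom T$ (so $u_0 - r^*$ is an outward normal), there are $(r_n, s_n) \in \graph T$ with $r_n \to r^*$ and $s_n$ diverging in the direction $u_0 - r^*$. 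The pairing $\ip{s_n}{u_0 - r_n} + \ip{r_n}{v}$ then blows up, giving $\psi_G(u_0, v) = +\infty$. Under $\conv \supp U = \real{m}$, the proper set $\cl \dom T$ misses an open ball of positive $\law{U}$-measure, so $\EP{\psi_G(Y)} = +\infty$, contradicting the finiteness of the dual value.

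For part~\ref{item:26}, the plan is constructive. Take an optimal set $G^*$ and optimal plan $\gamma^* = \law{X^*, Y}$ with $X^* = (R^*, W^*)$. Since $\abs{V} \le K$ and $W^* = \cEP{V}{X^*}$, we have $\abs{W^*} \le K$ almost surely, so $F_0 \set \supp \law{X^*}$ is a closed $S$-monotone subset of $\real{m} \times B$, where $B \set \descr{w \in \real{m}}{\abs{w} \le K}$. I then construct $\tilde G = \graph \tilde T \in \mset{S}$ with $\tilde G \supset F_0$ and $\range \tilde T \subset B$ via a Kirszbraun-type extension: in the Minty variables $(a,b) \set (r + w, r - w)$, monotonicity of $T$ becomes non-expansiveness of $a \mapsto b$ and the range constraint $w \in B$ becomes the bounded-deviation constraint $\abs{b - a} \le 2K$, both of which can be preserved by a suitable non-expansive extension to all of $\real{m}$. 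The auxiliary fact then gives $\dom \tilde T = \real{m}$, so $\proj{x^1,\dots,x^m}{\tilde G} = \real{m}$. Finally, $F_0 \subset \tilde G$ implies $X^* \in \tilde G$ almost surely, and Theorem~\ref{th:2}\ref{item:5} yields that $(\gamma^*, \tilde G)$ is an optimal plan/set pair.

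The main obstacle lies in part~\ref{item:26}: constructing the extension $\tilde G$ with range in $B$ while preserving optimality of $\gamma^*$ against it. The bounded extension is a nontrivial variant of Kirszbraun's theorem. The optimality verification reduces, through Theorem~\ref{th:2}, to checking that for each added $(r, w) \in \tilde G \setminus F_0$ the inequality
\begin{displaymath}
  \ip{w}{U} + \ip{r}{V - w} \le \ip{W^*}{U} + \ip{R^*}{V - W^*}
\end{displaymath}
holds almost surely, which combines the range bound $\abs{w} \le K$, the boundedness $\abs{V} \le K$, and the monotonicity constraint linking $(r,w)$ to $(R^*, W^*)$.
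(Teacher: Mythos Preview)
Your approach to part~\ref{item:25} is essentially correct and is the contrapositive of the paper's argument: both rest on the Minty estimate
\begin{displaymath}
  \psi_G(u,v) - \ip{u}{v} \geq t\,\dist(u,\dom T)^2,\qquad t>0,
\end{displaymath}
obtained from the resolvent $t u + v = t r_t + s_t$ with $(r_t,s_t)\in G$. Your justification, however, is imprecise: Rockafellar's local boundedness theorem only says that $T$ is unbounded near a boundary point of $\cl\dom T$; it does not by itself give divergence in the outward normal direction $u_0-r^*$. The Minty parametrization (as in the paper) is the clean way to get the directional statement you need. Likewise, your auxiliary fact that bounded range forces $\dom T=\real{m}$ needs one more ingredient than your one-line argument---Minty only gives that $\dom T$ is $K$-dense; you then need near-convexity of $\dom T$ (or the local-boundedness characterization) to conclude.

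Part~\ref{item:26} has a genuine gap. A Kirszbraun-type maximal monotone extension $\tilde G\supset F_0$ with $\range\tilde T\subset B$ need \emph{not} be dual-optimal, and the three ingredients you list (range bound, boundedness of $V$, monotonicity between $(r,w)$ and $(R^*,W^*)$) are not sufficient to force the inequality you write. Counterexample: take $m=1$, $U\equiv 0$, and $V$ uniform on $[-1,1]$. Then every $X^*=(0,W^*)$ with $W^*=\cEP{V}{W^*}$ gives an optimal plan with value $0$; in particular the constant $X^*=(0,0)$ is optimal and $F_0=\{(0,0)\}$. The extension $\tilde T\equiv 0$ is maximal monotone, has range $\{0\}\subset B$, and contains $F_0$, yet
\begin{displaymath}
  \psi_{\tilde G}(0,V)=\sup_{r\in\real{}}\,rV=+\infty\quad\text{on }\{V\neq 0\},
\end{displaymath}
so $\tilde G$ is not optimal. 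The paper sidesteps this by starting not from $F_0$ but from an optimal $H\in\mset{S}$ and \emph{truncating} it via the normal cone of a convex $B$ containing the values of $W^*$ in its interior:
\begin{displaymath}
  G \set \descr{(u+q,v)}{(u,v)\in H,\; q\in N_B(v)}.
\end{displaymath}
This specific construction (Rockafellar--Wets, Example~12.45) guarantees $G\in\mset{S}$, full first-coordinate projection, and---crucially---the pointwise bound $\psi_G(u,v)\leq \psi_H(u,v)$ for all $v\in B$, from which optimality of $G$ is immediate since $Y\in\real{m}\times B$. The point is that optimality is inherited from $H$, not recovered from the plan.
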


\begin{proof}
  \ref{item:25}: Let $G$ be an optimal set for~\eqref{eq:3}.  From
  $\EP{\psi _G(Y)}<\infty$ and the convexity of $\dom{\psi _G}$ we
  obtain
  \begin{equation}
    \label{eq:16}
    \proj{x^1, \dots, x^m}{\dom \psi_G}=\real{m}.
  \end{equation}
  As $G\in \mset{S}$, by~\cite[Theorem~12.41, p.~555]{RockWets:98}
  and~Remark~\ref{rem:1}, we know that the projection of $G$ on the
  first $m$-coordinates is \emph{nearly convex}. In other words, there
  exists a convex set $D$ such that
  \begin{displaymath}
    D\subset \proj{x^1, \dots, x^m}{G} \subset \cl D.
  \end{displaymath}
  It remains to be shown that
  $D=\proj{x^1, \dots, x^m}{G} = \cl{D} = \real{m}$, which is
  equivalent to
  \begin{displaymath}
    \dist(u,D) \set \inf_{r\in D} \norm{u-r} = 0, \quad u\in
    \real{m}. 
  \end{displaymath}
  We fix $u\in \real{m}$. In view of~\eqref{eq:16}, there exists
  $v \in \real{m}$ such that $y = (u,v) \in \dom{\psi_G}$. For every
  $t>0$, Minty's parametrization of $G\in \mset{S}$, Theorems~12.12
  and~12.15 in \cite[p.~539--540]{RockWets:98}, yields unique $r(t)$
  and $s(t)$ in $\real{m}$ such that $x(t) = (r(t),s(t)) \in G$ and
  \begin{displaymath}
    t u + v = t r(t) + s(t). 
  \end{displaymath}
  Then $v - s(t) = t(r(t) - u)$ and
  \begin{align*}
    \psi_G(y) - \frac12 S(y,y)
    & \geq S(x(t),y) - \frac12
      \cbraces{S(x(t),x(t)) + S(y,y)} \\
    &  = \frac12 S(x(t)-y,y-x(t)) =
      \ip{r(t) - u}{v-s(t)} \\
    & = t \norm{r(t) - u}^2 \geq t \dist^2(u,D).
  \end{align*}
  Taking $t\to \infty$, we deduce that $\dist(u,D)=0$.

  \ref{item:26}: Let $\gamma$ be an optimal plan
  for~\eqref{eq:2}. Passing, if necessary, to a larger probability
  space we can assume that $\gamma = \law{X,Y}$ for
  $X\in \lsp{2}{2m}$. We write $X = (R,W)$, where
  $R,W \in \lsp{2}{m}$.  As $W=\cEP{V}{X}$ and $V$ is bounded, $W$ is
  also bounded. Let $B$ be a closed convex set in $\real{m}$ whose
  interior contains the values of $W$. We denote by $N_B$ the normal
  cone to $B$:
  \begin{align*}
    N_B(v) & \set  \descr{u\in\real{m}}{\ip{u}{s-v}\leq 0, \ s\in B},
             \quad v\in B,\\ 
    N_B(v) & \set \emptyset, \quad v\notin B. 
  \end{align*}
 
  Let $H$ be an optimal set for \eqref{eq:3}. By Theorem~\ref{th:2},
  $X = (R,W) \in H$. As $W\in \interior{B}$, we have
  \begin{displaymath}
    \cbraces{\proj{x^{m+1}, \dots, x^{2m}}{H}} \cap \interior{B}\not=
    \emptyset. 
  \end{displaymath}
  This condition allows us to use the \emph{truncation} result for
  maximal monotone multifunctions from \cite[Example~12.45(a),
  p.~557]{RockWets:98}. According to this result, the set
  \begin{displaymath}
    G\set \descr{(u+r,v)\in \real{2m}}{(u,v)\in H,  r \in N_B(v)}
  \end{displaymath}
  belongs to $\mset{S}$, coincides with $H$ on
  $\real{m}\times \interior{B}$, and
  $\proj{x^{1},\dots,x^{m}}{G} = \real{m}$. Moreover, for
  $u\in \real{m}$ and $v \in B$,
  \begin{align*}
    \psi_G(u,v) &=\sup_{(r,s)\in H, q\in N_B(s)} 
                  \cbraces{\ip{r+q}{v} + \ip{u}{s}- \ip{r+q}{s}}\\
                & =\sup_{(r,s)\in H, q \in N_B(s)} 
                  \cbraces{\ip{r}{v}+\ip{u}{s}-\ip{r}{s}+ \ip{q}{v-s}}\\
                & \leq \sup_{(r,s)\in H} 
                  \cbraces{\ip{r}{v}+\ip{u}{s}-\ip{r}{s}}
                  = \psi_H(u,v).
  \end{align*}
  As $Y = (U,V) \in \real{m}\times B$, the optimality of $H$ implies
  that $\psi_G(Y) = \psi_H(Y)$. Hence, $G$ is an optimal set for
  \eqref{eq:3}.
\end{proof}

\begin{proof}[Proof of Theorem~\ref{th:9}]
  Theorem~\ref{th:4} yields an optimal map $X$ for~\eqref{eq:1} such
  that the law of $\cbraces{X,Y}$ is an optimal plan
  for~\eqref{eq:2}. Using Lemma~\ref{lem:10} we obtain an optimal set
  $G$ for~\eqref{eq:3} such that $\proj{x^1,\dots,x^m} G = \real{m}$.
  The rest of the proof follows from Theorem~\ref{th:8}.
\end{proof}

Next, we provide uniqueness criteria for the equilibrium. For a
function $\map{f}{\real{m}}{\real{m}}$, we define its continuity set
as
\begin{displaymath}
  C(f) \set \descr{u\in \real{m}}{f
    \text{~is continuous at~}u}.
\end{displaymath}

\begin{Theorem}
  \label{th:10}
  Let $U,V \in \lsp{2}{m}$, $Y \set (U,V)$, $\nu \set \law{Y}$, and
  $S\in \msym{2m}m$ be the standard matrix.  Let $(R,f)$ and $(Q,g)$
  be $Y$-equilibria.
  \begin{enumerate}[label = {\rm (\alph{*})}, ref={\rm (\alph{*})}]
  \item \label{item:27} The $Y$-equilibria yield the same insider
    profit:
    \begin{displaymath}
      \ip{R-U}{V-f(R)}=  \ip{Q-U}{V-g(Q)}.
    \end{displaymath}
  \item \label{item:28} If $\supp{\nu} = \real{2m}$, then there exists
    a unique $H\in \mset{S}$ optimal for~\eqref{eq:3}.  The graphs of
    $f$ and $g$ are contained in $H$.  In particular, $f$ and $g$
    coincide on their common continuity set:
    \begin{displaymath}
      f(u) = g(u), \quad u \in C(f) = C(g). 
    \end{displaymath}
  \item \label{item:29} If~\eqref{eq:8}, \ref{item:13}, \ref{item:14},
    and \ref{item:15} hold, then the equilibrium prices are unique:
    $f(R)=g(Q)$.  If, in addition, $f$ and $g$ are monotone or
    $Y \in \interior{\supp{\nu}}$, then the equilibrium total orders
    are also unique: $R=Q$. In this case, $X\set (R,f(R))$ is the
    unique optimal map for~\eqref{eq:1} and the law of $(X,Y)$ is the
    unique optimal plan for~\eqref{eq:2}.
  \end{enumerate}
\end{Theorem}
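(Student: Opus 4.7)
For part~(a), the proof of Lemma~\ref{lem:8}\ref{item:21} shows that the insider profit for $(R,f)$ equals $\psi_F(Y) - \frac12 S(Y,Y)$ with $F = \cl\graph{f}$, and analogously for $(Q,g)$ with $F' = \cl\graph{g}$. Fixing any optimal $G \in \mset{S}$ and applying Lemma~\ref{lem:8}\ref{item:23} to both equilibria yields $\psi_F(Y) = \psi_G(Y) = \psi_{F'}(Y)$ a.s., so the profits coincide.

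For part~(b), the key observation is that, when $\supp\nu = \real{2m}$ and $\EP{\psi_H(Y)} < \infty$, the convex lsc function $\psi_H$ is finite and continuous on all of $\real{2m}$: its domain contains a $\nu$-full-measure set, which is dense in $\real{2m}$, and a dense convex subset of $\real{2m}$ has closure $\real{2m}$, nonempty interior (its affine hull being $\real{2m}$), and interior equal to the interior of its closure, hence equals $\real{2m}$. Uniqueness of $H$ then follows: if $H_1, H_2$ are both optimal, $\psi_{H_1}(Y) = \psi_{H_2}(Y)$ a.s.\ extends by continuity and density to $\psi_{H_1} = \psi_{H_2}$ everywhere, and the Fitzpatrick identification $H_i = \{y : \psi_{H_i}(y) = \frac12 S(y,y)\}$ forces $H_1 = H_2 =: H$. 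For the graph inclusion, Lemma~\ref{lem:8}\ref{item:22}--\ref{item:23} gives $\psi_F \geq \psi_H$ globally and $\psi_F(Y) = \psi_H(Y)$ a.s.; at any $y_0$, choosing $y_n \to y_0$ with $\psi_F(y_n) = \psi_H(y_n)$ and combining lsc of $\psi_F$ with continuity of $\psi_H$ gives $\psi_F(y_0) \leq \lim \psi_H(y_n) = \psi_H(y_0) \leq \psi_F(y_0)$, so $\psi_F = \psi_H$ on $\real{2m}$. For any $(r,f(r)) \in F$, the definitional sup bound $\psi_F(y) \geq S((r,f(r)),y) - \frac12 S((r,f(r)),(r,f(r)))$ for all $y$ transfers to $\psi_H$; evaluating at $y = x \in H$ (where $\psi_H(x) = \frac12 S(x,x)$ by Theorem~A.2 of~\cite{KramSirb:24}) produces $S((r,f(r)) - x, (r,f(r)) - x) \geq 0$, and maximality of $H$ in $\mset{S}$ forces $(r,f(r)) \in H$. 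Finally, writing $H = \graph{T}$ for a maximal monotone multifunction $T$ (Remark~\ref{rem:1}), $f$ and $g$ are selections of $T$; at $u \in C(f) \cap C(g)$ the monotonicity inequality $\ip{u'-u}{f(u') - g(u)} \geq 0$ taken at $u' = u + te_i$ and $t \to 0^\pm$, together with continuity of $f$ at $u$, yields $f(u)_i = g(u)_i$ for every $i$.

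For part~(c), Theorem~\ref{th:6} gives uniqueness of the optimal plan and map $X$ and, crucially, single-valuedness of $P_G(Y)$ a.s.\ for every optimal $G$. Lemma~\ref{lem:8}\ref{item:21} gives that $(\cEP{U}{R},f(R))$ and $(\cEP{U}{Q},g(Q))$ are both optimal maps, so by uniqueness $\cEP{U}{R} = \cEP{U}{Q} =: W$ and $f(R) = g(Q)$, establishing price uniqueness. To upgrade to $R = Q$, it suffices to exhibit an optimal $G$ with $(R, f(R)) \in P_G(Y)$: combined with $X = (W, f(R)) \in P_G(Y)$ from Theorem~\ref{th:2} and single-valuedness, this forces $(R, f(R)) = (W, f(R))$, i.e.\ $R = W$; an identical argument with $g$ gives $Q = W$, hence $R = Q$. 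In the monotone case, take $G = H_f$ from Lemma~\ref{lem:8}\ref{item:24}: since $F \subset H_f$ and $\psi_F(Y) = \psi_{H_f}(Y)$ (Lemma~\ref{lem:8}\ref{item:23}), the maximizer $(R, f(R)) \in F$ of the sup over $F$ also maximizes over $H_f$, so $(R, f(R)) \in P_{H_f}(Y)$. Under $Y \in \interior\supp\nu$, pick any optimal $G$; localizing the part~(b) argument (positivity of $\nu$ on neighborhoods of each $y_0 \in \interior\supp\nu$, lsc of $\psi_F$, continuity of $\psi_G$ at interior points of $\dom\psi_G$) yields $\psi_F = \psi_G$ on $\interior\supp\nu$, hence on a neighborhood of $Y$; therefore $\partial\psi_F(Y) = \partial\psi_G(Y)$, which equals the singleton $\{SX\}$ since $P_G(Y) = \{X\}$ and $Y$ is interior to $\dom\psi_G$, so the subdifferential of the pointwise-sup $\psi_G$ collapses to the gradient of the unique active affine function. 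Then $(R, f(R)) \in P_F(Y)$ gives $S(R, f(R)) \in \partial\psi_F(Y) = \{SX\}$, and invertibility of $S$ yields $(R, f(R)) = X$. The final assertion that $X = (R, f(R))$ is the unique optimal map and that $\law{X,Y}$ is the unique optimal plan is then immediate from Theorem~\ref{th:6}.
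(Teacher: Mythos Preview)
Your overall strategy matches the paper's proof closely for parts~(a) and~(c), and in part~(b) you give direct arguments where the paper cites external results (Theorem~3.5 in~\cite{KramSirb:24} for uniqueness of $H$, and Lemma~\ref{lem:12} for $F\subset H$). Your direct arguments there are correct and a nice alternative. However, there are two gaps worth flagging.

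\textbf{Part~(b): the equality $C(f)=C(g)$ is not proved.} Your monotonicity-and-limit argument shows that for $u\in C(f)$ one has $f(u)=g(u)$ (indeed it shows $T(u)=\{f(u)\}$, since the argument works for any $v\in T(u)$ in place of $g(u)$). Thus $C(f)\subset\{u:T(u)\text{ is a singleton}\}$, and by symmetry the same holds for $C(g)$. But you never establish the reverse inclusion, i.e., that $T(u)$ being a singleton forces $f$ (and $g$) to be continuous at $u$. This is exactly what the paper's Lemma~\ref{lem:11} supplies: using that $\dom T=\real{m}$ (so $T$ is locally bounded) and that $\cl\graph f\subset\graph T$ is closed, every cluster point of $f$ at $u$ lies in $T(u)$, whence $T(u)$ a singleton implies continuity of $f$ at $u$. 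Without this step, you have only $f=g$ on $C(f)\cap C(g)$, not the asserted $C(f)=C(g)$.

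\textbf{Part~(c): differentiability of $\psi_G$ needs a reference.} Your claim that ``the subdifferential of the pointwise-sup $\psi_G$ collapses to the gradient of the unique active affine function'' when $P_G(Y)$ is a singleton is the right intuition, but it is a Danskin-type statement that does not follow automatically here because the index set $G$ is non-compact. The paper invokes the specific property of Fitzpatrick functions recorded as \cite[Theorem~A.4]{KramSirb:24}: if $y\notin\Sigma(P_H)$ then $\psi_H$ is differentiable at $y$ with $S^{-1}\nabla\psi_H(y)$ the unique element of $P_H(y)$. You should cite this (or supply an argument, e.g., via local boundedness of $P_G$ near interior points of $\dom\psi_G$). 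A smaller point: your ``localizing'' argument for $\psi_F=\psi_G$ on $\interior\supp\nu$ as written gives only $\psi_F\le\psi_G$ via lower semicontinuity; the clean route is to note that both $\psi_F$ and $\psi_G$ are convex and finite on the open set $D=\interior\supp\nu$ (since each is finite $\nu$-a.e.\ and $\dom$ is convex), hence both are continuous there, and equality on a $\nu$-full (dense) subset of $D$ extends to all of $D$.
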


As in Theorem~\ref{th:7}, the stronger condition~\ref{item:16} can be
used in Theorem~\ref{th:10}\ref{item:29} instead of \ref{item:13},
\ref{item:14}, and \ref{item:15}.

We divide the proof of Theorem~\ref{th:10} into lemmas.  Let $H$ be
the dual minimizer from Theorem~\ref{th:10}\ref{item:28} and the
multifunction $\mmap{T}{\real{m}}{\real{m}}$ be such that
$\graph{T} = H$. The next lemma, which is essentially a special case
of \citet[Theorem~3]{Qi:83}, shows that the common continuity set $C$
of $f$ and $g$ coincides with the set where $T$ is
single-valued. Therefore, according to \citet[Theorem~1]{Zara:73}, $C$
is an $F_{\sigma}$-set of full Lebesgue measure.

\begin{Lemma}
  \label{lem:11}
  Let $S\in \msym{2m}m$ be the standard matrix and
  $\map{f}{\real{m}}{\real{m}}$ be a monotone function.  Define the
  multi-valued function $\mmap{T}{\real{m}}{\real{m}}$ by
  \begin{displaymath}
    T(u)\set \conv \descr {v\in \real{m}}{
      f(u_n)\rightarrow v\text{~for some sequence~} u_n\rightarrow u},
    \quad u\in \real{m}. 
  \end{displaymath}
  Then, $\graph{T}$ is the unique set in $ \mset{S}$ containing
  $\graph{f}$ and
  \begin{displaymath}
    C(f) 
    = 
    \descr{u\in \real{m}}{T
      \text{~is single-valued  at~}u}.
  \end{displaymath}
\end{Lemma}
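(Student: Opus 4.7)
My plan is to apply Lemmas~\ref{lem:6} and~\ref{lem:7} to the closed set $F \set \cl\graph f$ to obtain a unique maximal $S$-monotone extension, and then identify that extension with $\graph T$. Since $f$ is defined on all of $\real{m}$, $\proj{x^1,\dots,x^m}{F} = \real{m}$, so Lemma~\ref{lem:6} gives $\psi_F(y) \geq \frac12 S(y,y)$. Classical monotonicity of $f$ passes to $F$ by continuity of the inner product, and for the standard $S$ this is precisely $S$-monotonicity of $F$. Lemma~\ref{lem:7} then produces a unique $H \in \mset{S}$ containing $F$, namely $H = \descr{x}{\psi_F(x) = \frac12 S(x,x)}$. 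By Remark~\ref{rem:1}, $H$ is the graph of a maximal monotone multifunction with closed convex fibers $H_u \set \descr{v\in\real{m}}{(u,v)\in H}$. Any $G\in\mset{S}$ containing $\graph f$ is closed, hence contains $F$, hence coincides with $H$ by Lemma~\ref{lem:7}; this yields the uniqueness part of the statement as soon as $H=\graph T$.

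For the identification $H = \graph T$, the inclusion $\graph T \subset H$ is immediate: every cluster point $v = \lim_n f(u_n)$ at $u$ lies in $F\subset H$, and since $H_u$ is closed and convex, $T(u)=\conv\{\cdots\} \subset H_u$. For the reverse inclusion, I would argue by contradiction. Suppose $(u, v_0)\in H$ with $v_0\notin T(u)$; since $T(u)$ is closed and convex, separate $v_0$ from $T(u)$ by a direction $e\in\real{m}$ with $\ip{e}{v_0} > \sup_{v\in T(u)}\ip{e}{v}$. Applying monotonicity of $H$ to $(u,v_0)$ and to $(u+te, f(u+te))\in F\subset H$ for small $t>0$ gives $\ip{e}{f(u+te)} \geq \ip{e}{v_0}$. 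Because $f$ is monotone with full domain, it is locally bounded at $u$ (a standard fact for monotone maps in the interior of their domain), so I can extract a subsequential limit $v^*=\lim_k f(u+t_k e)$ with $t_k\downarrow 0$. Then $v^*$ is a cluster point of $f$ at $u$ and thus lies in $T(u)$, yet $\ip{e}{v^*}\geq \ip{e}{v_0}$ contradicts the separation.

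Once $H=\graph T$ is established, the characterization of $C(f)$ follows quickly. If $f$ is continuous at $u$, then every sequence $u_n\to u$ satisfies $f(u_n)\to f(u)$, so the cluster set reduces to $\{f(u)\}$ and $T(u)=\{f(u)\}$ is a singleton. Conversely, $f(u)\in T(u)$ (taking $u_n\equiv u$), so if $T(u)$ is a singleton then $T(u)=\{f(u)\}$. Local boundedness of $f$ near $u$ then forces every subsequential limit of $f(u_n)$ to be a cluster point lying in $T(u)=\{f(u)\}$, whence $f(u_n)\to f(u)$, i.e., $u\in C(f)$.

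The principal obstacle I expect is the reverse inclusion $H_u\subset T(u)$ in the identification step; the remaining parts are essentially bookkeeping against Lemmas~\ref{lem:6}, \ref{lem:7}, and Remark~\ref{rem:1}. The separation/local-boundedness argument I sketched is a direct realization of the content of~\cite[Theorem~3]{Qi:83} that the paper cites, so I would either invoke that result or include the short derivation above.
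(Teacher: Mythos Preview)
Your proposal is correct and follows essentially the same route as the paper: both invoke Lemmas~\ref{lem:6} and~\ref{lem:7} to obtain the unique maximal $S$-monotone extension $H\supset\cl\graph f$, then use local boundedness of a monotone map with full domain together with a hyperplane separation of $v_0\notin T(u)$ from the compact convex set $T(u)$ to rule out $(u,v_0)\in H$, and finally derive the continuity characterization from local boundedness. The only cosmetic difference is that the paper phrases the separation step as a direct contrapositive (exhibiting a point of $\graph f$ that violates monotonicity with $(u,v_0)$) rather than as a contradiction via a subsequential limit, but the underlying argument is the same.
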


\begin{proof}
  We recall that $\map{f}{\real{m}}{\real{m}}$ is monotone if and only
  if $F\set \cl{\graph{f}}$ is $S$-monotone.  Lemmas \ref{lem:6} and
  \ref{lem:7} show that there exists a unique $G\in \mset{S}$
  containing $F$. Let $\mmap{A}{\real{m}}{\real{m}}$ be the monotone
  multifunction whose graph coincides with $G$.

  Fix $u\in \real{m}$. Since
  $\graph{f} \subset \graph{A} = G \in \mset{S}$, the set $A(u)$ is
  closed, convex, and contains $T(u)$. Having $\real{m}$ as its
  domain, $A$ (and then also $f$) is locally bounded at $u$,
  \cite[Corollary~12.38, p.~554]{RockWets:98}. It follows that $A(u)$
  and $T(u)$ are convex compacts.

  Consider $v\notin T(u)$.  We separate $v$ strongly from $T(u)$, that
  is, we choose $r\in \real{m}$ and $\epsilon>0$ such that
  $\ip{r}{v-s}\geq 2 \epsilon$ for any $s\in T(u)$.  As $f$ is bounded
  in a neighborhood of $u$ and $T(u)$ contains all the cluster points
  of $f$ at $u$, there exists $\delta>0$ such that
  \begin{displaymath}
    \ip{(u+rt)-u}{v-f(u+rt)}=t\ip{r}{v-f(u+rt)}\geq t\epsilon>0, \quad
    t\in (0,\delta). 
  \end{displaymath}
  Since $G$ is $S$-monotone and $\graph{f}\subset G$, we obtain that
  $(u,v)\notin G$, or, equivalently, that $v\notin A(u)$. Thus,
  $T(u) = A(u)$ and $G = \graph{T}$.

  The description of $C(f)$ at the end of the lemma follows from the
  local boundedness of $f$ and the standard compactness argument.
\end{proof}

\begin{Lemma}
  \label{lem:12}
  Let $S\in \msym{d}{m}$, $G \in \mset{S}$, and $F$ be a closed set in
  $\real{d}$. Then
  \begin{displaymath}
    F\subset G \quad \iff \quad \psi_F \leq \psi_G.
  \end{displaymath}  
\end{Lemma}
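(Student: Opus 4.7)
The forward implication is immediate from the definition of $\psi_F$ as a supremum over $F$: if $F\subset G$, then for every $y$ the supremum defining $\psi_F(y)$ is taken over a subset of the set defining $\psi_G(y)$, hence $\psi_F(y)\leq \psi_G(y)$. The closedness of $F$ plays no role here.

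For the converse, the plan is to argue by contraposition using the maximality of $G\in\mset{S}$. Suppose $\psi_F\leq \psi_G$ but there exists $x_0\in F\setminus G$. By the maximality of $G$ in the family of $S$-monotone sets, the augmented set $G\cup\{x_0\}$ fails to be $S$-monotone, so one can extract $z_0\in G$ with
\begin{displaymath}
  S(x_0-z_0,x_0-z_0)<0,
\end{displaymath}
which, after expanding and rearranging, is equivalent to
\begin{displaymath}
  S(x_0,z_0) > \tfrac12 S(x_0,x_0) + \tfrac12 S(z_0,z_0).
\end{displaymath}

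The next step is to test the hypothesis $\psi_F\leq \psi_G$ at the point $y=z_0$. On one hand, since $x_0\in F$, the definition of $\psi_F$ gives the lower bound
\begin{displaymath}
  \psi_F(z_0) \geq S(x_0,z_0) - \tfrac12 S(x_0,x_0) > \tfrac12 S(z_0,z_0),
\end{displaymath}
where the strict inequality uses the rearranged form of the previous display. On the other hand, because $z_0\in G\in\mset{S}$, Theorem~A.2 in \cite{KramSirb:24} (already invoked in the proofs of Lemmas~\ref{lem:7} and~\ref{lem:8}) yields $\psi_G(z_0)=\tfrac12 S(z_0,z_0)$. Combining these two facts gives $\psi_F(z_0)>\psi_G(z_0)$, contradicting the assumption $\psi_F\leq\psi_G$.

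There is no substantial obstacle here: the only non-routine ingredient is recognizing that the correct test point for $\psi_F\leq\psi_G$ is the witness $z_0$ of non-monotonicity rather than the offending point $x_0$ itself, after which the argument reduces to the completion-of-square identity that also underlies Lemma~\ref{lem:3}. Note that closedness of $F$ is not actually used in the proof; it is natural to state it since non-closed $F$ would anyway share the same Fitzpatrick function with its closure.
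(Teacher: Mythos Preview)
Your proof is correct and follows essentially the same route as the paper's: both argue the nontrivial direction by contraposition, pick a point $x_0\in F\setminus G$, use maximality of $G$ to produce a witness $z_0\in G$ with $S(x_0-z_0,x_0-z_0)<0$, and then evaluate $\psi_F$ and $\psi_G$ at $z_0$ to obtain the contradiction $\psi_F(z_0)>\tfrac12 S(z_0,z_0)=\psi_G(z_0)$. Your observation that closedness of $F$ is not actually used matches the paper's argument.
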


\begin{proof}
  The implication $\implies$ is straightforward. If
  $y\in F\setminus G$, then the maximal $S$-monotonicity of $G$
  implies the existence of $x\in G$ such that ${S(x-y,x-y) < 0}$.
  Since $\psi_G(x) = \frac12 S(x,x)$, we obtain
  \begin{align*}
    \psi_F(x) & \geq S(x,y) - \frac12 S(y,y) = \frac12 \cbraces{S(x,x) -
                S(x-y,x-y)} \\
              &> \frac 12 S(x,x) = \psi_G(x)  
  \end{align*}
  and arrive to a contradiction.
\end{proof}
\begin{proof}[Proof of Theorem~\ref{th:10}] We denote
  $F\set \cl{\graph{f}}$ and $G\set \cl{\graph{g}}$.

  \ref{item:27}: The insider profit for the $Y$-equilibrium $(R,f)$,
  can be written as
  \begin{displaymath}
    \ip{R-U}{V-f(R)} = \psi_F(Y) - \frac12 S(Y,Y),
  \end{displaymath}
  and similarly for $(Q,g)$.  By~Lemma~\ref{lem:8}\ref{item:23},
  $\psi_F(Y)=\psi_H(Y)=\psi_G(Y)$, where $H\in \mset{S}$ is any
  optimal set for~\eqref{eq:3}.
    
  \ref{item:28}: As $\supp{\nu} = \real{2m}$, Theorem~3.5 in
  \cite{KramSirb:24} yields the unique $H\in\mset{S}$ optimal for
  \eqref{eq:3}.  By~Lemma~\ref{lem:8}\ref{item:23},
  $\psi_F(Y)=\psi_H(Y)=\psi_G(Y)$.  The convex functions $\psi_F$,
  $\psi_H$, and $\psi_G$ are then finite and coincide on $\real{2m}$.
  Lemma \ref{lem:12} yields $F \cup G \subset H$.
  
  Consider $\mmap{T}{\real{m}}{\real{m}}$ such that $H=\graph{T}$.  By
  Lemma \ref{lem:11},
  \begin{displaymath}
    C(f) = \descr{u\in \real{m}}{T \text{~is single-valued
        at~}u}=C(g). 
  \end{displaymath}
  Consequently, $f(u)=g(u)$ for $u\in C(f)=C(g)$.

  \ref{item:29}: We denote $Z\set (R,f(R))$ and recall that the
  profit-maximizing condition~\eqref{eq:11} can be written as
  $Z\in P_F(Y)$.  Let $H$ be an optimal set for~\eqref{eq:3}.  By
  Lemma~\ref{lem:8}\ref{item:23}, $\psi_F(Y)=\psi_H(Y)$.
  Theorem~\ref{th:6} shows that $Y\notin \Sigma (P_H)$ and that the
  unique optimal map and plan are given by
  $M\set P_H(Y) \ind{Y\not\in \Sigma(P_H)}$ and the law of $(M,Y)$,
  respectively.
  
  We denote $W\set \cEP{U}{R}$.  By Lemma~\ref{lem:8}\ref{item:21},
  $X\set (W,f(R))$ is an optimal map for \eqref{eq:1}. Consequently,
  $X= (W,f(R))=M$.  Using a similar argument for $(Q,g)$, we obtain
  $f(R) = g(Q)$.
  
  If $f$ is monotone, then in view of Lemma~\ref{lem:8}\ref{item:24},
  we can choose the optimal $H$ such that $F\subset H$.  From
  $Z\in P_F(Y)$ and $\psi_F(Y)=\psi_H(Y)$, we obtain $Z\in P_H(Y)$. As
  $Y\notin \Sigma (P_H)$, we conclude that $Z = (R,f(R))=M$.
  Similarly, $(Q,g(Q)) = M$.
     
  As $\psi_F(Y) = \psi_H(Y)$, the closed convex functions $\psi_F$ and
  $\psi_H$ are finite and coincide on the open set
  $D\set \interior{\supp{\nu}}$. Clearly,
  $\partial \psi_F = \partial \psi_H$ on $D$. Let
  $y \in D\setminus \Sigma(P_H)$. By the properties of Fitzpatrick
  functions, \cite[Theorem~A.4]{KramSirb:24}, $\psi_H$ is
  differentiable at $y$ and $S^{-1}\nabla \psi_H(y)$ is the only
  element of $P_H(y)$.  In view of~\eqref{eq:13}, $P_F(y)$ is either
  empty or coincides with $P_H(y)$. If now $Y \in D$, then actually
  $Y \in D\setminus \Sigma(P_H)$ and we obtain that $P_F(Y) = P_H(Y)$
  and $(R,f(R)) = M$.  Similarly, $(Q,g(Q)) = M$.
\end{proof}

We finish the section with a Gaussian example, where the equilibrium
pricing function $f$ is linear.  The constant matrix $A = \nabla f$ in
Theorem~\ref{th:11}, the sensitivity of the price to the total trading
order, is the multi-dimensional version of Kyle's lambda
from~\cite{Kyle:85}. This matrix is the unique positive-definite
solution of the algebraic Riccati equation~\eqref{eq:17}.

We say that an $m\times m$ matrix $A$, possibly non-symmetric, is
positive-definite, and write $A>0$, if $\ip{r}{Ar}>0$ for every
$r\in \real{m}$.  For a \emph{symmetric} matrix $C>0$, we denote by
$C^{\alpha}$ the power of $C$ with exponent $\alpha \in \real{}$. The
matrix $C^{\alpha}$ is symmetric and positive-definite. Any powers of
$C$ commute. We denote by $B^T$ the transpose of a matrix $B$.

\begin{Theorem}
  \label{th:11}
  Let $S\in \msym{2m}{m}$ be the standard matrix and
  $U,V\in \lsp{2}{m}$ be such that $Y\set (U,V)$ has a non-degenerate
  centered Gaussian distribution with block covariance matrix
  structure
  \begin{gather*}
    \Sigma_{uu}\set \EP{UU^T}>0, \quad \Sigma_{vv}\set \EP{VV^T}>0,\\
    \Sigma_{uv}\set \EP{UV^T}, \quad \Sigma _{vu}\set
    \EP{VU^T}=\Sigma^T _{uv}.
  \end{gather*}
  \begin{enumerate}[label = {\rm (\alph{*})}, ref={\rm (\alph{*})}]
  \item \label{item:30} The non-symmetric algebraic Riccati equation
    \begin{equation}
      \label{eq:17}
      A\Sigma _{uu}A+\cbraces{A\Sigma _{uv}-\Sigma_{vu}A}=\Sigma_{vv}
    \end{equation}
    has the unique $m\times m$ matrix solution $A>0$.
  
  \item \label{item:31} The $Y$-equilibrium $(R,f)$ is unique and has
    the form:
    \begin{displaymath}
      R\set  \cbraces{A+A^T}^{-1}(A^TU+V),\quad 
      f(u)\set Au, \ u \in \real{m}.
    \end{displaymath}
    The random variable $X\set \cbraces{R,f(R)}$ is the unique optimal
    map for~\eqref{eq:1}, the law of $\cbraces{X,Y}$ is the unique
    optimal plan for~\eqref{eq:2}, and $\graph{f}$ is the unique
    optimal set for~\eqref{eq:3}.
  
  \item \label{item:32} The symmetric equation
    $\Lambda \Sigma_{uu}\Lambda=\Sigma_{vv}$ has the unique
    $m\times m$ positive-definite matrix solution
    \begin{displaymath}
      \Lambda \set \Sigma _{uu}^{-\frac12} 
      \cbraces{\Sigma _{uu}^{\frac12} \Sigma_{vv}
        \Sigma _{uu}^{\frac12} }^{\frac12}
      \Sigma _{uu}^{-\frac12}.
    \end{displaymath}
    The matrix $A$ solving~\eqref{eq:17} is symmetric if and only if
    $\Lambda \Sigma_{uv}=\Sigma_{vu}\Lambda$, that is, if and only if
    $\Lambda \Sigma_{uv}$ is symmetric.  In this case,
    \begin{displaymath}
      A=\Lambda, \quad  R=\frac 12\cbraces{U+\Lambda^{-1}V}.
    \end{displaymath}
  \end{enumerate}
\end{Theorem}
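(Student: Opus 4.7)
The plan is to verify that every positive-definite matrix $A$ solving~\eqref{eq:17} produces a linear monotone $Y$-equilibrium of the exact form stated in~\ref{item:31}, and then deduce both parts~\ref{item:30} and~\ref{item:31} from the equilibrium theory of Section~\ref{sec:equil-with-insid}. Given such $A$, set $f(u)=Au$, which is monotone because $A+A^T>0$, and $R=(A+A^T)^{-1}(A^TU+V)$. The profit-maximization condition~\eqref{eq:11} amounts to the first-order identity $(A+A^T)r=A^TU+V$, uniquely solved by $R$; the Hessian $-(A+A^T)$ is negative definite, so strict maximality holds. Since $Y=(U,V)$ is Gaussian, $(R,V)$ is jointly Gaussian, hence $\EP{V|R}=\EP{VR^T}\EP{RR^T}^{-1}R$; the efficiency condition~\eqref{eq:10}, $AR=\EP{V|R}$, therefore reduces to $A\EP{RR^T}=\EP{VR^T}$, and a direct expansion in terms of $\Sigma_{uu},\Sigma_{uv},\Sigma_{vu},\Sigma_{vv}$ shows this matrix identity is equivalent to~\eqref{eq:17}.

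Uniqueness in~\ref{item:30} is then immediate: two positive-definite solutions would give two distinct monotone $Y$-equilibria, contradicting Theorem~\ref{th:10}\ref{item:29}, whose hypotheses hold because non-degenerate Gaussian $\nu$ has full support (so~\eqref{eq:8} holds) and a Lebesgue density (so~\ref{item:16}, and hence~\ref{item:13}--\ref{item:15}, holds). For existence I would use a variational argument: restrict the dual problem~\eqref{eq:3} to linear graphs $G=\graph A$ with $A+A^T>0$; a direct computation yields $\psi_G(u,v)=\tfrac12(v+A^Tu)^T(A+A^T)^{-1}(v+A^Tu)$, whence
\[
\Psi(A)\set\EP{\psi_G(Y)}=\tfrac12\trace\bigl((A+A^T)^{-1}[\Sigma_{vv}+\Sigma_{vu}A+A^T\Sigma_{uv}+A^T\Sigma_{uu}A]\bigr).
\]
The function $\Psi$ is smooth on the open set $\{A:A+A^T>0\}$ and coercive: it blows up as $A+A^T$ approaches singularity (since $V+A^TU$ remains a non-degenerate Gaussian by the assumed non-degeneracy of $(U,V)$) and as $\|A\|\to\infty$ (a scaling check, e.g.\ along $A=\alpha I$, suffices). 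Hence a minimizer $A^*$ exists with $A^*+(A^*)^T>0$. The main technical step is the matrix-calculus reduction of $\nabla_A\Psi(A^*)=0$ to~\eqref{eq:17}: after collecting terms, the first-order condition takes the form $-C^{-1}MC^{-1}+\Sigma_{uv}C^{-1}+\Sigma_{uu}A^*C^{-1}=0$ with $C=A^*+(A^*)^T$ and $M=\Sigma_{vv}+\Sigma_{vu}A^*+(A^*)^T\Sigma_{uv}+(A^*)^T\Sigma_{uu}A^*$, and multiplying on the right by $C$ and simplifying produces~\eqref{eq:17}.

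With part~\ref{item:30} established, part~\ref{item:31} follows at once: the pair $(R,f)$ built from the unique positive-definite $A$ is a valid equilibrium and, by Theorem~\ref{th:10}\ref{item:29}, the unique one; $X=(R,f(R))$ is the unique optimal map by Theorems~\ref{th:8} and~\ref{th:7}; and $\graph f$ is the unique optimal set for~\eqref{eq:3} by Lemma~\ref{lem:8}\ref{item:24}. For part~\ref{item:32}, conjugating $\Lambda\Sigma_{uu}\Lambda=\Sigma_{vv}$ by $\Sigma_{uu}^{1/2}$ gives $(\Sigma_{uu}^{1/2}\Lambda\Sigma_{uu}^{1/2})^2=\Sigma_{uu}^{1/2}\Sigma_{vv}\Sigma_{uu}^{1/2}$, whose unique positive-definite principal square root yields the stated formula for $\Lambda$. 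If $A$ solving~\eqref{eq:17} is symmetric, the transpose of~\eqref{eq:17} (using $\Sigma_{vu}=\Sigma_{uv}^T$) reads $A\Sigma_{uu}A+\Sigma_{vu}A-A\Sigma_{uv}=\Sigma_{vv}$; adding and subtracting this with~\eqref{eq:17} give $A\Sigma_{uu}A=\Sigma_{vv}$ and $A\Sigma_{uv}=\Sigma_{vu}A$, hence $A=\Lambda$ and $\Lambda\Sigma_{uv}=\Sigma_{vu}\Lambda$. Conversely, the commutation $\Lambda\Sigma_{uv}=\Sigma_{vu}\Lambda$ makes $\Lambda$ itself satisfy~\eqref{eq:17}, so $A=\Lambda$ by uniqueness in~\ref{item:30}; the formula $R=(U+\Lambda^{-1}V)/2$ follows by substituting $A=A^T=\Lambda$ into $R=(A+A^T)^{-1}(A^TU+V)$. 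The principal obstacle throughout is the matrix-calculus verification that $\nabla_A\Psi=0$ is equivalent to~\eqref{eq:17}; once this identity is in hand, the remaining pieces are direct.
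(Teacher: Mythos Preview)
Your approach is genuinely different from the paper's, and largely sound, but there is one real gap and one mis-citation.

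\textbf{Comparison.} The paper never minimizes $\Psi(A)$ directly. Instead it invokes \cite[Theorems~4.3 and~4.5]{KramSirb:24}, which state that for non-degenerate Gaussian $\nu$ the dual problem~\eqref{eq:3} has a \emph{unique} minimizer and that this minimizer is a linear maximal strictly $S$-monotone subspace; Lemma~\ref{lem:13} then converts this into $G=\graph(A\cdot)$ for a unique $A>0$. The link to~\eqref{eq:17} is made via Theorem~\ref{th:2}: one computes $P_{G_A}(Y)=(R,AR)$ and shows that the martingale property $X=\cEP{Y}{X}$ is equivalent, through Gaussian independence, to~\eqref{eq:17}. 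Your route---verify that a solution of~\eqref{eq:17} yields a bona fide monotone equilibrium, get uniqueness from Theorem~\ref{th:10}\ref{item:29}, and get existence by minimizing $\Psi$ over $\{A:A+A^T>0\}$---is more self-contained (no appeal to the Gaussian results of \cite{KramSirb:24}), and your first-order computation reducing $\nabla_A\Psi=0$ to~\eqref{eq:17} is correct.

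\textbf{The gap.} Your coercivity argument is incomplete: ``a scaling check, e.g.\ along $A=\alpha I$'' does \emph{not} suffice. Coercivity must rule out every escape route in the non-compact set $\{A:A+A^T>0\}$, in particular sequences where the skew part $K=\tfrac12(A-A^T)$ blows up while $C=A+A^T$ stays bounded. The conclusion is still true: writing $M=\Cov(V+A^TU)$ and using non-degeneracy of $(U,V)$ one gets $M\geq \lambda_{\min}(\Sigma)I$, hence $2\Psi(A)\geq \lambda_{\min}(\Sigma)\trace(C^{-1})$, which handles $C\to$ singular; and a short computation gives $\trace(C^{-1}A^T\Sigma_{uu}A)=\tfrac14\trace(\Sigma_{uu}C)+\trace(C^{-1}K^T\Sigma_{uu}K)$, so after absorbing the cross terms $\Sigma_{vu}A+A^T\Sigma_{uv}$ by a matrix Cauchy--Schwarz one controls both $\trace(C)$ and $\|K\|_F^2/\trace(C)$ from below by $\Psi(A)$. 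You should supply this (or an equivalent) argument.

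\textbf{Minor point.} Lemma~\ref{lem:8}\ref{item:24} only tells you that $\graph f$ is contained in a \emph{unique} $G\in\mset{S}$ and that this $G$ is optimal; it does not give uniqueness of the optimal set for~\eqref{eq:3} among all of $\mset{S}$. For that you need $\supp\nu=\real{2m}$ and Theorem~\ref{th:10}\ref{item:28} (which in turn rests on \cite[Theorem~3.5]{KramSirb:24}). With that citation in place, and since $\graph f\in\mset{S}$ by Lemma~\ref{lem:13}, the unique optimal set is indeed $\graph f$.
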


The proof of the theorem relies on Theorems~4.3 and~4.5
in~\cite{KramSirb:24} and the following simple observation. For
$S\in \msym{d}{m}$, a closed set $G$ in $\real{d}$ is called
\emph{strictly $S$-monotone} if
\begin{displaymath}
  S(x-y,x-y) > 0, \quad x,y \in G, \; x\not = y.  
\end{displaymath}
If, in addition, $G\in \mset{S}$, then we say that $G$ is maximal
strictly $S$-monotone.

\begin{Lemma}
  \label{lem:13}
  Let $S\in \msym{2m}{m}$ be the standard matrix. A closed set $G$ in
  $\real{2m}$ is a linear maximal strictly $S$-monotone subspace of
  $\real{2m}$ if and only if $G = \graph{f}$, where $f(u) = Au$,
  $u\in \real{m}$, and $A$ is an $m\times m$ positive-definite matrix.
\end{Lemma}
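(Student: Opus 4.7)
My plan is to prove the two directions separately. The backward direction ($\Leftarrow$) is the warm-up; the forward direction ($\Rightarrow$) carries the content.

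For the backward direction, given $G = \graph{f}$ with $f(u) = Au$ and $A$ positive-definite, the set $G$ is manifestly a closed linear subspace. Strict $S$-monotonicity is immediate from the identity $S((u_1,Au_1)-(u_2,Au_2),(u_1,Au_1)-(u_2,Au_2)) = 2\ip{A(u_1-u_2)}{u_1-u_2}$, which is positive whenever $u_1\neq u_2$. For maximality, I suppose $(u_0,v_0)\notin G$ extends $G$ while preserving $S$-monotonicity, substitute $u=u_0+\epsilon w$ into the resulting monotonicity inequality against $(u,Au)\in G$, divide by $\epsilon$, and let $\epsilon\to 0^{\pm}$ to obtain $\ip{Au_0-v_0}{w}=0$ for every $w\in\real{m}$; hence $v_0=Au_0$, a contradiction.

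For the forward direction, let $V \set \proj{x^1,\dots,x^m}{G}$, which is a linear subspace of $\real{m}$. I first verify that $G$ is the graph of a linear map on $V$: since $G$ is a subspace containing $0$, any $(0,v)\in G$ with $v\neq 0$ would give, via strict $S$-monotonicity applied to $(0,v)\neq (0,0)$, the strict inequality $S((0,v),(0,v)) > 0$, yet direct computation yields $S((0,v),(0,v)) = 2\ip{v}{0}=0$, an impossibility. So the vertical fibers of $G$ are trivial and $G = \descr{(u,Tu)}{u\in V}$ for a well-defined linear $T:V\to\real{m}$. Strict $S$-monotonicity then translates to $\ip{T(u-u')}{u-u'}>0$ for distinct $u,u'\in V$, i.e., the symmetric part $\tilde T_s$ of $\tilde T \set P_V T:V\to V$ (with $P_V$ the Euclidean projection onto $V$) is positive-definite on $V$.

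The main obstacle is upgrading $V$ to all of $\real{m}$; this is where the maximality of $G$ is crucially used. Assume toward contradiction $V\subsetneq \real{m}$, pick $u_0\in V^{\perp}\setminus\braces{0}$, and attempt to enlarge $G$ by the single point $(u_0,\lambda u_0)$ for $\lambda>0$. Using $\ip{u_0}{u}=0$ for $u\in V$ and setting $a \set P_V T^T u_0 \in V$, the $S$-monotonicity of the enlargement reduces to
\begin{displaymath}
  \ip{\tilde T_s u}{u} - \ip{u}{a} + \lambda\abs{u_0}^2 \geq 0, \quad u\in V.
\end{displaymath}
Completing the square using the invertibility of $\tilde T_s$ on $V$, the minimum over $u\in V$ equals $-\tfrac{1}{4}\ip{\tilde T_s^{-1} a}{a}$, so any $\lambda \geq \tfrac{1}{4\abs{u_0}^2}\ip{\tilde T_s^{-1} a}{a}$ makes the inequality hold. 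This contradicts $G\in \mset{S}$, forcing $V = \real{m}$. Then $T$ is represented by an $m\times m$ matrix $A$ whose symmetric part is positive-definite, which is precisely the paper's definition of $A>0$.
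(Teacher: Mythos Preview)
Your proof is correct, and both directions follow the same overall structure as the paper, but the decisive steps are handled differently.

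For the backward direction, you and the paper compute strict $S$-monotonicity identically. For maximality, the paper simply cites \cite[Example~12.7]{RockWets:98} (a continuous monotone function defined on all of $\real{m}$ is maximal monotone), whereas you give a self-contained first-variation argument. Both are fine.

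For the forward direction, both arguments first observe that $G$ is the graph of a linear map $T$ over the subspace $V=\proj{x^1,\dots,x^m}{G}$. The paper then disposes of $V=\real{m}$ in one line via Minty's parametrization: since $G\in\mset{S}$, the linear map $u\mapsto u+Tu$ is a bijection from $V$ onto $\real{m}$, which forces $\dim V=m$. Your argument is instead constructive: assuming $V\subsetneq\real{m}$, you pick $u_0\in V^\perp\setminus\{0\}$ and show by completing the square against the positive-definite symmetric part $\tilde T_s$ that $(u_0,\lambda u_0)$ can be adjoined to $G$ for $\lambda$ large while preserving $S$-monotonicity, contradicting maximality. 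The paper's route is shorter and leans on a standard tool from monotone-operator theory; yours is more elementary and fully self-contained, trading brevity for an explicit quadratic computation.
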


\begin{proof}
  $\impliedby$: The function $f(u) = Au$ is strictly monotone:
  \begin{displaymath}
    \ip{f(u) - f(v)}{u-v} =  \ip{A(u-v)}{u-v} > 0, \quad u\not= v,
  \end{displaymath}
  so $\graph{f}$ is strictly $S$-monotone.  Being defined on the whole
  $\real{m}$, this linear function is maximal monotone, according to
  \cite[Example~12.7, p.~535]{RockWets:98}.  In other words,
  $\graph{f}\in \mset{S}$.

  $\implies$: We denote $D \set \proj{x^1,\dots,x^m}{G}$ and observe
  that $D$ is a linear subspace of $\real{m}$.  Being linear strictly
  $S$-monotone, $G$ is the graph of a linear strictly monotone
  function $\map{f}{D}{\real{m}}$. By Minty's parametrization of
  $G\in \mset{S}$, the linear function $u\to u + f(u)$ is a bijection
  between $D$ and $\real{m}$. It follows that $D=\real{m}$.  Being a
  linear function on $\real{m}$, $f$ can be written as $f(u) = Au$,
  $u \in \real{m}$, for an $m\times m$ matrix $A$. As $f$ is strictly
  monotone, $A>0$.
\end{proof}

\begin{proof}[Proof of Theorem~\ref{th:11}]
  We fix an $m\times m$ matrix $A>0$, define the linear function
  $f_A(r) = Ar$, $r\in \real{m}$, and denote ${G_A} \set \graph
  f_A$. Lemma~\ref{lem:13} shows that $G_A \in \mset{S}$. By direct
  computations, the $S$-projection of $y=(u,v)$ on $G_A$ has the form:
  \begin{align*}
    P_{{G_A}}(y) & = \argmin_{x \in {G_A}} S(y-x,y-x) = \argmin_{(w,Aw) \in {G_A}}
                   \ip{u-w}{v-Aw} \\
                 & = (r,Ar),  \quad
                   r = \cbraces{A+A^T}^{-1}(A^Tu+v).
  \end{align*}
  An $S$-normal vector at $x = (r,Ar) = P_{G_A}(y)$ is given by
  \begin{displaymath}
    y - x = (u-r,v-Ar) = \cbraces{u-r, -A^T(u-r)}.
  \end{displaymath}

  Let $R\set \cbraces{A+A^T}^{-1}(A^TU+V)$ and
  $X\set P_{G_A}(Y) = (R,AR)$.  In view of the Gaussian structure,
  $X= \cEP{Y}{X}$ if and only if
  \begin{displaymath}
    X= (R,AR)\text{~and~}Y-X=(U-R, -A^T(U-R))
  \end{displaymath}
  are independent.  This is equivalent to the independence of $R$ and
  $U-R$ and then to the independence of
  \begin{displaymath}
    (A+A^T)R= A^TU+V\text{~and~}(A+A^T)(U-R)
    =AU-V.  
  \end{displaymath}
  Due to the Gaussian structure, the last independence property can be
  written as
  \begin{align*}
    0  & = \EP{(AU-V)(A^TU+V)^T} = \EP{(AU-V)(U^TA + V^T)} \\
       & = A \Sigma_{uu} A + A \Sigma_{uv}  - \Sigma_{vu}A -
         \Sigma_{vv}, 
  \end{align*}
  and thus, is equivalent to~\eqref{eq:17}. We have shown that
  \begin{displaymath}
    X\set P_{G_A}(Y)= \cEP{Y}{X} \iff 
    A\text{~solves~} \eqref{eq:17}.
  \end{displaymath}
  
  Using the equivalence of items~\ref{item:4} and~\ref{item:5} in
  Theorem~\ref{th:2}, we deduce
  \begin{equation}
    \label{eq:18}
    G_A\text{~ is optimal for~}\eqref{eq:3}\iff
    A \text{~solves~} \eqref{eq:17}.
  \end{equation}
  However, by~\cite[Theorems~4.3 and~4.5]{KramSirb:24}, the dual
  problem~\eqref{eq:3} has only one solution $G$ and this solution is
  a linear maximal strictly $S$-monotone subspace of
  $\real{2m}$. Lemma~\ref{lem:13} yields $A>0$ such that $G =
  G_A$. Clearly, such $A$ is unique.  In view of~\eqref{eq:18}, $A$ is
  the only positive-definite solution of the matrix equation
  \eqref{eq:17}.
  
  As $Y$ has a strictly positive density on $\real{2m}$, all
  conditions of Theorem~\ref{th:10} are satisfied. It follows that
  $(R,f)$ defined in~\ref{item:31} is the unique $Y$-equilibrium, that
  $X=(R,AR)$ is the unique optimal map for \eqref{eq:1} and the law of
  $(X,Y)$ is the unique optimal plan for \eqref{eq:2}. We have
  proved~\ref{item:30} and~\ref{item:31}.
  
  For~\ref{item:32}, we observe that $\Lambda$ is symmetric
  positive-definite. We check directly that
  $\Lambda \Sigma_{uu}\Lambda=\Sigma_{vv}$. Thus, $\Lambda$
  solves~\eqref{eq:17} if $\Sigma_{uv}=0$. As we have shown, $\Lambda$
  is the only positive-definite solution in this case.
  
  If $A$ is the positive-definite solution of~\eqref{eq:17}, then its
  transpose solves
  \begin{displaymath}
    A^T\Sigma_{uu}A^T-\cbraces{A^T\Sigma _{uv}-\Sigma_{vu}A^T}=\Sigma_{vv}.
  \end{displaymath}
  Comparing to~\eqref{eq:17}, the equality $A=A^T$ holds if and only
  if $A\Sigma_{uv}=\Sigma_{vu}A$, in which case $A$ and $\Lambda$
  solve the same equation. Consequently, if $A$ is symmetric, we have
  $A=\Lambda$ and $\Lambda \Sigma_{uv}=\Sigma _{vu}\Lambda$.
  Conversely, if $\Lambda \Sigma_{uv}$ is symmetric, then $\Lambda$
  solves~\eqref{eq:17} and thus, coincides with $A$.
\end{proof}

Theorem~\ref{th:11}\ref{item:32} describes those covariance matrices
$\Sigma_{uv}$ for which the pricing matrix $A$ is the same as the
pricing matrix $\Lambda$ for the uncorrelated case.  This extends a
similar observation about Kyle's lambda from~\cite{Kyle:85}
and~\cite{RochVila:94} for the model with just one stock.

\appendix

\section{Uniform approximation by maps}
\label{sec:linfty-density-monge}

We continue to identify random variables on a probability space
$(\Omega, \mathcal{F}, \mathbb{P})$ if they differ only on a set of
measure zero and interpret relations between them in the
$\as{\mathbb{P}}$ sense.  In particular, if $X$ and $Z$ are random
variables taking values in Polish (complete separable metric) spaces
$(\mathbb{S}_1, \rho_1)$ and $(\mathbb{S}_2,\rho_2)$, respectively,
then $X$ is $Z$-measurable if and only if $X=f(Z)$ ($\as{\mathbb{P}}$)
for a Borel function $\map{f}{\mathbb{S}_2}{\mathbb{S}_1}$.

\begin{Theorem}
  \label{th:12}
  Let $X$ and $Y$ be random variables on
  $(\Omega, \mathcal{F}, \mathbb{P})$ taking values in Polish spaces
  $(\mathbb{S}_1, \rho_1)$ and $(\mathbb{S}_2,\rho_2)$,
  respectively. If the law of $Y$ is atomless, then for every
  $\epsilon>0$ there exists a random variable $Z=Z(\epsilon)$ taking
  values in $(\mathbb{S}_2,\rho_2)$ such that $\law{Z} = \law{Y}$,
  $\rho_2\cbraces{Y,Z}\leq \epsilon$, and $X$ is $Z$-measurable.
\end{Theorem}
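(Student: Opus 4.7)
The plan is to build $Z$ piece by piece on a fine Borel partition of $\mathbb{S}_2$. Fix $\epsilon>0$ and partition $\mathbb{S}_2$ into countably many pairwise disjoint Borel sets $(A_k)_{k\ge1}$ of diameter less than $\epsilon$; this is possible since $\mathbb{S}_2$ is separable. Discarding the $\nu$-null ones, it suffices to construct, on each event $E_k\set \braces{Y\in A_k}$ of positive $\mathbb{P}$-measure, a random variable $Z_{(k)}$ taking values in $A_k$ whose conditional law under $\mathbb{P}_k\set \mathbb{P}(\cdot|E_k)$ equals $\nu_k\set \nu|_{A_k}/\nu(A_k)$ and such that $X$ is $Z_{(k)}$-measurable on $E_k$. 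Concatenating the $Z_{(k)}$ then produces $Z$ with $\law{Z}=\nu$ by disintegration, $\rho_2(Y,Z)\le \epsilon$ since $\diam A_k<\epsilon$, and $X$ globally $Z$-measurable.

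Fix $k$. Since $\nu_k$ is an atomless Borel probability measure on the Polish space $A_k$, Kuratowski's isomorphism theorem yields a Borel isomorphism $j_k\colon A_k\to[0,1]$ modulo null sets that pushes $\nu_k$ onto Lebesgue measure. Let $U_k\set j_k(Y)$, which is then uniform on $[0,1]$ under $\mathbb{P}_k$. It is enough to build a uniform $V_k\in[0,1]$ under $\mathbb{P}_k$ with respect to which $X|_{E_k}$ is measurable, and then set $Z_{(k)}\set j_k^{-1}(V_k)$. Write $\mu_k\set \law{X|E_k}$, decomposed into atomic and continuous parts $\mu_k=\sum_i p_i\delta_{a_i}+\mu_k^c$. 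Partition $[0,1]$ into disjoint Borel intervals $(I_i)$ with $|I_i|=p_i$, together with a residual Borel set $I_\infty$ of Lebesgue measure $1-\sum_i p_i=\mu_k^c(\mathbb{S}_1)$. On $\braces{X=a_i}$, define $V_k\set c_i+p_i F_i(U_k)$, where $I_i=[c_i,c_i+p_i]$ and $F_i$ is the cumulative distribution function of $U_k$ under $\mathbb{P}_k(\cdot|X=a_i)$. On the complementary event where $X$ takes an atomless value, pick a Borel isomorphism $\beta_k$ between $\mu_k^c$ and Lebesgue measure on $I_\infty$, and set $V_k\set \beta_k(X)$.

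The essential observation is that because the marginal of $U_k$ under $\mathbb{P}_k$ is Lebesgue, the conditional law $\mathbb{P}_k(U_k\in\cdot|X=a_i)$ is absolutely continuous with respect to Lebesgue and hence atomless, so $F_i$ is continuous and $F_i(U_k)$ is uniform on $[0,1]$ conditionally on $X=a_i$ by the probability integral transform; the affine rescaling then places $V_k$ uniformly on $I_i$. A routine marginal computation across the pieces of the partition gives $\mathbb{P}_k(V_k\in B)=|B|$ for every Borel $B\subset[0,1]$, while $X|_{E_k}$ is $V_k$-measurable because $V_k\in I_i$ identifies the atom $a_i$ and $V_k\in I_\infty$ determines $X=\beta_k^{-1}(V_k)$. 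The main subtlety I anticipate is the joint measurability of the conditional CDFs $F_i$ and of the Borel isomorphism $\beta_k$ as functions on the underlying probability space; this rests on standard disintegration and regular-conditional-probability arguments on Polish (standard Borel) spaces, after which assembling the pieces into a single global $Z$ is routine.
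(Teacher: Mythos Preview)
Your partition-and-glue framework is exactly the paper's: cover $\mathbb{S}_2$ by disjoint Borel sets of small diameter, work conditionally on each piece, and patch. The difference lies in the local step. The paper isolates this as a lemma and dispatches it with a one-line trick: replace $X$ by $(X,Y)$, which is atomless because $Y$ is, and then a single CDF/quantile transform $Z=Q_Y(F_X(X))$ does the job after a Borel isomorphism to $\real{}$. Your route instead keeps $X$ as is, splits its conditional law into atomic and diffuse parts, and on each atom $\{X=a_i\}$ uses the conditional CDF of the uniformised $Y$ to manufacture a uniform block $I_i$; on the diffuse part you invoke a measure isomorphism $\beta_k$. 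This is correct---your key observation that $\mathbb{P}_k(U_k\in\cdot\,|\,X=a_i)$ is atomless follows immediately from the atomlessness of the marginal of $U_k$---but it is more laborious and the measurability concern you flag is not a genuine obstacle, since there are only countably many atoms and each $F_i$ is a fixed Borel function. The paper's $(X,Y)$ substitution buys you exactly the elimination of this case split: once the source is atomless, no decomposition into atomic and continuous parts is needed.
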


\begin{proof}
  Let $\epsilon >0$. We take a dense sequence $y_n \in \mathbb{S}_2$,
  $n=1,2,\dots$, and define
  \begin{displaymath}
    B_n\set \descr{z\in
      \mathbb{S}_2}{\rho_2(z,y_n)\leq\frac{\epsilon}{2}}, \quad
    D_n\set B_n\setminus \bigcup _{i=1}^{n-1}B_i, \quad n\geq 1.  
  \end{displaymath}
  By keeping only the terms having strictly positive probability, we
  obtain sets $D_n\subset\mathbb{S}_2$, $n= 1,2,\dots$, that are
  \emph{mutually disjoint} and such that
  \begin{displaymath}
    Y \in \bigcup_{n=1}^{\infty} D_n, \quad \PP{Y\in D_n} > 0, \quad \diam
    (D_n):=\sup_{x,y\in D_n}\rho_2(x,y)\leq \varepsilon. 
  \end{displaymath}
  For each $n\geq 1$ we apply Lemma~\ref{lem:14} using the probability
  measure
  \begin{displaymath}
    \mathbb{P}_n(A) \set \mathbb{P}(A|Y\in D_n) = \frac{\PP{A\cap
        \braces{Y\in D_n}}}{\PP{Y\in D_n}}, \quad A\in \mathcal{F}, 
  \end{displaymath}
  instead of the original measure $\mathbb{P}$.  As a result, we
  obtain a random variable $Z_n$ taking values in $D_n$ and a Borel
  function $\map{f_n}{\mathbb{S}_2}{\mathbb{S}_1}$ such that $Z_n$ and
  $Y$ have the same law under $\mathbb{P}_n$ and
  $\mathbb{P}_n(X = f_n(Z_n)) = 1$.  For the random variable $Z$ and
  the Borel function $\map{f}{\mathbb{S}_2}{\mathbb{S}_1}$ such that
  \begin{displaymath}
    Z(\omega) = Z_n(\omega)\text{~if~}Y(\omega)\in D_n,
    \quad f( y) = f_n ( y)\text{~if~}y\in D_n, \quad n\geq 1,  
  \end{displaymath}
  we have $\Law (Y)=\Law (Z)$, $X=f(Z)$,
  and~$\rho_2\cbraces{Y,Z} \leq \epsilon$.
\end{proof}

\begin{Lemma}
  \label{lem:14}
  Let $X$ and $Y$ be random variables taking values in Polish spaces
  $(\mathbb{S}_1, \rho_1)$ and $(\mathbb{S}_2,\rho_2)$,
  respectively. If the law of $Y$ is atomless, then there exists a
  random variable $Z$ taking values in $(\mathbb{S}_2,\rho_2)$ such
  that $\Law (Z)=\Law (Y)$ and $X$ is $Z$-measurable.
\end{Lemma}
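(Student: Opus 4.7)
The plan is to invoke the measure-isomorphism theorem --- every atomless standard probability space is Borel-isomorphic modulo null sets to $([0,1],\mathrm{Leb})$ --- twice, reducing the problem to the trivial observation that on $([0,1],\mathrm{Leb})$ the identity is a uniform random variable whose $\sigma$-algebra is all of $\mathcal{B}([0,1])$.

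First I would reduce the target. Since $\mathbb{S}_2$ is Polish and $\law{Y}$ is atomless, $(\mathbb{S}_2,\mathcal{B}(\mathbb{S}_2),\law{Y})$ is a standard atomless probability space, so there is a Borel bijection between conull Borel subsets $\phi:\mathbb{S}_2\to[0,1]$ with $\phi_*\law{Y}=\mathrm{Leb}$. It therefore suffices to produce a uniform $[0,1]$-valued random variable $W$ on $(\Omega,\mathcal{F},\mathbb{P})$ such that $X$ is $W$-measurable: then $Z\set\phi^{-1}(W)$ will satisfy $\law{Z}=\law{Y}$ and $\sigma(X)\subset\sigma(W)=\sigma(Z)$.

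To produce $W$ I would apply the same theorem to the joint law $\mu\set\law{X,Y}$ on the Polish product space $\mathbb{S}_1\times\mathbb{S}_2$. This $\mu$ is atomless, because a Borel probability on a Polish space is atomless as soon as every singleton has zero mass, and $\mu(\{(x,y)\})\leq\law{Y}(\{y\})=0$. The theorem then delivers a Borel bijection modulo null $\Phi:\mathbb{S}_1\times\mathbb{S}_2\to[0,1]$ with $\Phi_*\mu=\mathrm{Leb}$. Setting $W\set\Phi(X,Y)$ gives $\law{W}=\mathrm{Leb}$, while $(X,Y)=\Phi^{-1}(W)$ almost surely yields that $X$ is $W$-measurable.

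The main obstacle is essentially bookkeeping: one must cite (or record) the isomorphism theorem in the form that delivers honest Borel measurable maps with Borel inverses off a null set, rather than an abstract measure-algebra isomorphism, so that $\Phi^{-1}(W)$ and $\phi^{-1}(W)$ can be read as genuine Borel functions of $W$ and of $\omega$. Once that standard fact is invoked, the pushforward identities and measurability statements are immediate, and the composition $Z=\phi^{-1}\circ\Phi(X,Y)$ is the required random variable.
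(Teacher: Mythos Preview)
Your argument is correct and follows the same strategy as the paper's proof: make the law of the pair $(X,Y)$ atomless, pass to a uniform random variable via a measure isomorphism, and then pull back along a second isomorphism to recover the law of $Y$. The only difference is packaging. Where you invoke the standard-measure isomorphism theorem as a black box (atomless standard probability space $\cong ([0,1],\mathrm{Leb})$ via a Borel bijection between conull sets), the paper unpacks this explicitly: it first uses a Borel isomorphism Polish $\to\real{}$ (citing Dudley), and then the CDF/quantile pair $F_X,Q_X$ to produce the uniform variable $U=F_X(X)$ and the map back $Z=Q_Y(U)$. The paper's trick of replacing $X$ by $(X,Y)$ to force atomlessness is exactly your observation that $\mu=\law{X,Y}$ has no point masses because $\law{Y}$ has none. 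Your route is shorter but relies on a heavier citation; the paper's is self-contained once the Polish--$\real{}$ isomorphism is granted.
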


\begin{proof}
  Replacing $X$ with $(X,Y)$ we can assume from the start that the law
  of $X$ is atomless.  Using bijections
  $\map{g}{\mathbb{S}_1}{\real{}}$ and
  $\map{h}{\mathbb{S}_2}{\real{}}$ that are Borel measurable together
  with their inverses, we can also replace $X$ with $g(X)$ and $Y$
  with $h(Y)$.  For the existence of such bijections we refer
  to~\citet[Theorem~13.1.1]{Dudl:02}.  Thus, we assume that both $X$
  and $Y$ are real valued and have continuous cumulative distribution
  functions.

  We denote by $F_X$ and $Q_X$ the cumulative distribution and
  quantile functions of $X$:
  \begin{align*}
    F_X(x) &\set \PP{X\leq x}, \quad x\in \real{}, \\
    Q_X(u) &\set \min \descr{x\in \real{}}{u\leq F_X(x)}, \quad u\in (0,1). 
  \end{align*}
  As $F_X$ is continuous, the random variable $U\set F_X(X)$ has
  uniform distribution on $(0,1)$ and $X = Q_X(U)$. Similarly,
  $V\set F_Y(Y)$ has uniform distribution on $(0,1)$ and $Y = Q_Y(V)$,
  where $F_Y$ and $Q_Y$ are the cumulative distribution and quantile
  functions of $Y$. Setting
  \begin{displaymath}
    Z \set Q_Y(U) = Q_Y(F_X(X)),
  \end{displaymath}
  we deduce that $Z$ has the same law as $Y$, that $U = F_Y(Z)$, and
  $X = Q_X(F_Y(Z)))$.
\end{proof}

\begin{Remark}
  \label{rem:2}
  Let $X$ and $Y$ be random variables on
  $(\Omega, \mathcal{F}, \mathbb{P})$ taking values in $\real{d}$ and
  assume that the law of $Y$ is atomless. The conditional law of $X$
  given $Y$ describes the randomized transport from $Y$, the
  ``origin'', to $X$, the ``target''. A classical question is to
  approximate $(X,Y)$ by $(X',Y')$, where the transport from $Y'$ to
  $X'$ is deterministic, that is, $X'$ is $Y'$-measurable.

  Theorem~\ref{th:12} shows that $(X,Y)$ can be \emph{pointwise
    uniformly} approximated by the elements of the family
  \begin{displaymath}
    \mathcal{C}_1(X,Y) \set \descr{(X,Z)}{\law{Z} = \law{Y} \text{~and
        $X$ is $Z$-measurable}}, 
  \end{displaymath}
  where the \emph{target} $X$ is kept frozen.

  In a more traditional approach, see~\cite{Prat:07}
  and~\cite{BeigLack:18} among the others, the approximating family is
  \begin{displaymath}
    \mathcal{C}_2(X,Y) \set \descr{(Z,Y)}{\law{Z} = \law{X} \text{~and
        $Z$ is $Y$-measurable}}, 
  \end{displaymath}
  where the \emph{origin} $Y$ remains unchanged. It has been shown
  that the law of $(X,Y)$ can be \emph{weakly} approximated by the
  laws of the elements of $\mathcal{C}_2(X,Y)$.  In general, $(X,Y)$
  can not be pointwise approximated by the elements of
  $\mathcal{C}_2(X,Y)$. For instance, if $X$ and $Y$ are independent
  and $(Z,Y) \in \mathcal{C}_2(X,Y)$, then $Z$ and $X$ are independent
  ($Z$ is a function of $Y$) identically distributed. Thus, the law of
  $W\set \abs{X-Z}$ is exactly the same for all such $Z$ and $W\not=0$
  as soon as $X$ is not a constant.
\end{Remark}

\bibliographystyle{plainnat} \bibliography{../bib/finance}

\end{document}